\documentclass[smallextended,referee,envcountsect]{svjour3}
\smartqed

\usepackage{amsmath,amssymb,amsfonts,amsopn}
\usepackage{graphicx}
\usepackage{xcolor}
\usepackage{booktabs,tabularx}
\usepackage{url}
\usepackage{hyperref}
\hypersetup{hidelinks,pdftitle={Trajectory-Wise Certification for Vector-Field Mirror Descent with Deterministic Finite Differences},pdfauthor={Masahito Hayashi}}

\newcommand{\Label}[1]{\label{#1}}

\journalname{Journal of Optimization Theory and Applications}

\begin{document}

\title{Trajectory-Wise Certification for Vector-Field Mirror Descent with Deterministic Finite Differences}

\author{Masahito Hayashi}

\institute{Masahito Hayashi \at
School of Data Science, The Chinese University of Hong Kong, Shenzhen, Longgang District, Shenzhen 518172, China \\
International Quantum Academy, Futian District, Shenzhen 518048, China \\
Graduate School of Mathematics, Nagoya University, Furo-cho, Chikusa-ku, Nagoya 464-8602, Japan \\
\email{hmasahito@cuhk.edu.cn}}

\date{Received: date / Accepted: date}

\maketitle

\begin{abstract}
We study mirror descent in which the objective gradient is replaced by a general vector field. Since such a field does not automatically relate the mirror update to the objective gap, we introduce a trajectory-wise generalized relative-smoothness condition and a generalized star-convexity interface. 
Together they yield a finite-horizon, a posteriori last-iterate certificate with an accumulated-stepsize term and an exceptional-region-dependent error term. 
We also give a pointwise sufficient condition for positive admissible stepsizes, displaying the effects of objective smoothness, mirror-map conditioning, and vector-field mismatch. 
We then construct a deterministic zeroth-order instance from coordinate central differences. 
The function values place the unknown gradient in an explicit uncertainty box, and verification of the interface reduces to robust conic dominance. 
We derive an explicit scaling of the central-difference vector that guarantees the required dominance over the gradient uncertainty set. 
Under uniform Hessian bounds, the resulting vector field satisfies the interface outside an explicit resolution-dependent neighborhood. 
The final guarantee combines a rate term governed by accumulated accepted stepsizes with a finite-resolution error floor.
\end{abstract}

\keywords{Deterministic zeroth-order optimization \and Mirror descent \and Bregman divergence \and A posteriori certification \and Deterministic finite differences}
\subclass{90C25 \and 65K10 \and 90C26 \and 90C22 \and 49M27}

\section{Introduction}
\Label{sec:intro}

\subsection{Problem setting and contributions}

Mirror descent is a basic framework for optimization in non-Euclidean geometry. A differentiable and strictly convex mirror map $\Phi$ transports a primal point $\mathbf{x}$ to the dual coordinate $\nabla\Phi(\mathbf{x})$, where an update direction is applied before mapping back to the primal space. When the update direction is the gradient of the objective, the usual analysis uses $\nabla f$ in two distinct ways: it determines the dual-space displacement, and convexity converts its inner product with $\mathbf{x}-\mathbf{x}_*$ into a function-value gap. Relative smoothness and related Bregman techniques extend this mechanism beyond Euclidean Lipschitz-gradient assumptions \cite{NemirovskiYudin1983,LuFreundNesterov2018,BauschkeBolteTeboulle2017}.

This paper asks what remains of this mechanism when the gradient in the mirror update is replaced by a general vector field $\Omega$. We consider updates of the form
\[
\mathbf{x}_{j+1}=(\nabla\Phi)^{-1}\bigl(\nabla\Phi(\mathbf{x}_j)-\eta_j\Omega(\mathbf{x}_j)\bigr),
\]
whenever the dual update lies in the domain on which $(\nabla\Phi)^{-1}$ is defined. Writing down this update is immediate, but obtaining a function-value guarantee is not. For a general vector field, neither descent nor the usual convexity inequality relating the update direction to the objective gap is automatic. The first purpose of the paper is therefore not to propose an oracle-optimal zeroth-order method, but to identify a verifiable interface under which the Bregman telescoping argument continues to yield a last-iterate guarantee.

The interface has two components. The first is a trajectory-wise generalized relative-smoothness condition comparing the generalized divergence generated by $(f,\Omega)$ with the Bregman divergence generated by $\Phi$ along each realized update. The second is an $\Omega$-dependent star-convexity inequality that converts the vector-field progress term into a function-value gap. We also allow the second inequality to hold only outside a prescribed exceptional region $U$ containing a minimizer. This formulation separates certified progress outside $U$ from the local error term inside it.

Under these two conditions, we derive the finite-horizon a posteriori certificate
\[
f(\mathbf{x}_t)-f(\mathbf{x}_*)
\le
\max\left\{
\frac{D_\Phi(\mathbf{x}_*\|\mathbf{x}_1)}{\sum_{j=1}^{t-1}\eta_j},
\sup_{\mathbf{x}\in U}\bigl(f(\mathbf{x})-f(\mathbf{x}_*)\bigr)
\right\}.
\]
The denominator is the accumulated stepsize of the realized trajectory and is therefore available after the run. The result is primarily a finite-horizon certificate. It yields an asymptotic statement at the exceptional-region-dependent floor only under the additional condition $\sum_{j=1}^{\infty}\eta_j=\infty$. The general framework does not claim that every certificate-driven line-search rule automatically guarantees this nonsummability property.

We complement the conditional certificate with a pointwise feasibility result. Suppose that $f$ is smooth with respect to a primal norm and that the mirror map is strongly convex and smooth with respect to the same norm. At a fixed point, if the relative mismatch between $\Omega$ and $\nabla f$ is sufficiently small, then a nonempty interval of positive stepsizes satisfies the trajectory-wise certificate. This result displays explicitly how objective smoothness, mirror-map conditioning, and oracle mismatch determine the local admissible stepsize scale. It is a pointwise statement and does not by itself provide a uniform positive lower bound along an infinite trajectory.

The second purpose of the paper is to construct a deterministic function-value instance of this general interface. We focus on coordinate central differences in moderate dimension. The two one-sided secant slopes in each coordinate enclose the corresponding component of the true gradient. Consequently, the unknown gradient belongs to an explicit axis-aligned uncertainty box centered at the central-difference vector. A norm approximation alone is not sufficient for the certificate. What is needed is a single scaled proxy vector whose inner product dominates that of every admissible gradient uniformly over a cone of directions aligned with the displacement from the minimizer.

This requirement leads to a robust conic dominance problem. We first solve the corresponding dominance problem exactly for the Euclidean ball containing the uncertainty box. For $d\ge2$, we obtain the exact minimum scaling on that ball. For $d=1$, the exact minimum is different, while the same formula remains a valid sufficient scaling. Applying the ball result to the containing ball gives a dimension-uniform sufficient scaling for the original box. We then combine this construction with a lower bound on the angle between $\nabla f(\mathbf{x})$ and $\mathbf{x}-\mathbf{x}_*$ under uniform Hessian bounds. The resulting finite-difference vector field satisfies the required generalized star-convexity inequality outside an explicit resolution-dependent exceptional set.

For a $\mu$-strongly convex and $L$-smooth objective in Euclidean geometry, the exceptional set is contained in a ball centered at the minimizer whose radius is proportional to $\epsilon\sqrt d$, with explicit dependence on $L/\mu$ and the cone parameter. The finite-difference specialization therefore yields a rate-plus-floor certificate. Before the exceptional set is reached, the certified term decreases according to $\bigl(\sum_{j=1}^{t-1}\eta_j\bigr)^{-1}$; once the exceptional set is reached, the procedure reports attainment of the resolution-dependent target region. In particular, the first term is $O(1/t)$ whenever the accepted stepsizes are uniformly bounded below by a positive constant.

The contributions are summarized as follows.
\begin{enumerate}
\item We formulate mirror descent driven by a general vector field and derive a finite-horizon, trajectory-wise last-iterate certificate from a Bregman comparison inequality and an $\Omega$-dependent star-convexity interface.
\item We give a pointwise sufficient stepsize condition for the trajectory-wise certificate in a general primal-dual norm pair, explicitly displaying the effects of objective smoothness, mirror-map conditioning, and vector-field mismatch.
\item For deterministic coordinate central differences, we identify the gradient uncertainty box, reduce the required inner-product comparison to robust conic dominance, solve the containing-ball scaling problem exactly for $d\ge2$, and transfer the result to a sufficient box scaling valid in every dimension.
\item Combining the conic construction with the Hessian-based angle bound, we obtain an explicit resolution-dependent exceptional neighborhood and a certified last-iterate bound for the stopped finite-difference procedure.
\end{enumerate}

The framework also clarifies the relation with information-geometric iterations. Certain generalized Arimoto--Blahut-type updates can be expressed in the same vector-field mirror form. This connection illustrates that the abstract certificate is not tied to finite differences, although coordinate central differences are the only deterministic zeroth-order instance analyzed in detail here \cite{Arimoto,Blahut,Ramakrishnan,Hayashi2025b}.

\subsection{Relation to existing work}
Classical deterministic derivative-free optimization includes direct-search and model-based trust-region methods; see also the broad review \cite{LarsonMenickellyWild2019}.
Direct-search methods select and test structured direction sets, while model-based methods construct local interpolation or regression models and use acceptance mechanisms to control the step 
\cite{KoldaLewisTorczon2003,AudetDennisMADS2006,AbramsonAudetDennisLeDigabelOrthoMADS2009,ConnScheinbergVicente2009,PowellNEWUOA2004,PowellBOBYQA2009}.
These approaches provide broad frameworks, particularly for stationarity analysis and constrained black-box optimization. The present work addresses a different question. It retains a Bregman mirror update and studies when a realized vector-field trajectory admits a function-value certificate. We do not claim an improvement in worst-case oracle complexity or computational performance over direct-search or trust-region methods.

Randomized zeroth-order optimization commonly constructs one-point or two-point gradient estimators through random perturbations and analyzes expected error, regret, or oracle complexity \cite{FlaxmanKalaiMcMahan2005,NesterovSpokoiny2017,DuchiJordanWainwrightWibisono2014,Shamir2017}. 
The distinction between classical coordinate finite differences and randomized finite-difference approximations has also been discussed from the viewpoint of derivative-free optimization \cite{ScheinbergFiniteDifference2022}.
Our finite-difference specialization is deterministic and uses coordinate function values. 
The distinction relevant here is not a claim of better dimension dependence. Rather, the realized finite-difference information is converted into a deterministic uncertainty set, and the required function-value interface is established through a robust geometric construction.

When exact gradients are available, mirror descent, Bregman proximal methods, and relative smoothness provide established tools for non-Euclidean convergence analysis \cite{NemirovskiYudin1983,LuFreundNesterov2018,BauschkeBolteTeboulle2017}. Recent work has also emphasized certificates based on inequalities verified along an optimization trajectory \cite{DragomirTaylorDAspremontBolte2022}. Our contribution is complementary: the update direction may be a general vector field, and the certificate is paired with an explicit finite-difference construction that manufactures the required function-value interface outside a controlled exceptional neighborhood.

The generalized star-convexity condition is used as an interface condition, not as an attempt to replace the strong-convexity assumptions of the finite-difference specialization. In the abstract theory, it identifies the precise inequality needed to convert vector-field progress into an objective gap. In the finite-difference analysis, ordinary convexity, uniform Hessian bounds, the gradient uncertainty box, and robust conic dominance are combined to verify that interface outside the resolution-dependent exceptional set. This separation permits the general certificate to be stated independently of the mechanism used to establish it.

Finally, the conic part of the analysis concerns robust second-order-cone geometry. The dominance requirement can be expressed as membership of the proxy error in the dual of a circular cone. The exact result proved here concerns the containing Euclidean ball when $d\ge2$; the finite-difference box inherits a sufficient scaling through set containment. Keeping these statements separate is essential because exactness for the ball does not imply exactness for the box \cite{LoboEtAl1998,AlizadehGoldfarb2003}.

\subsection{Organization}

Section~\ref{S2} introduces the vector-field mirror update, the trajectory-wise certificate, and the finite-horizon last-iterate bound. Section~\ref{S3} relates the framework to generalized Arimoto--Blahut-type iterations. Section~\ref{S5:toolbox} collects the norm-pair facts used in the analysis, and Section~\ref{S5} gives the pointwise feasibility condition for the trajectory-wise certificate. Section~\ref{S4} develops the deterministic central-difference construction, the gradient uncertainty box, the robust conic scaling, and the resulting resolution-dependent certificate. The subsequent sections contain the proofs of the general mirror identities, the pointwise feasibility theorem, the exceptional-neighborhood bound, and the Hessian angle estimate.  The proof of the robust dominance result is given in the appendix. Section~\ref{sec:discussion} discusses the finite-horizon interpretation, the role of the generalized star-convexity interface, and the limitations of the present construction.

\section{Generalized Mirror Descent}
\Label{S2}
To generalize mirror descent, we first choose a distance-generating function
$\Phi$.

Throughout the paper, $\Phi$ is assumed to be a Legendre function on
$\operatorname{int}\operatorname{dom}\Phi$, so that
$\nabla\Phi$ is one-to-one and
$(\nabla\Phi)^{-1}=\nabla\Phi^*$ on
$\operatorname{int}\operatorname{dom}\Phi^*$.
Every update uses a stepsize $\eta>0$ such that $ \nabla\Phi(\mathbf{x})-\eta\Omega(\mathbf{x}) \in \operatorname{int}\operatorname{dom}\Phi^*$.  
This condition ensures that $F_{\eta\Omega}(\mathbf{x}) = \nabla\Phi^*\bigl(\nabla\Phi(\mathbf{x})-\eta\Omega(\mathbf{x})\bigr) $ is well defined.
In the full-domain setting used
in the finite-difference specialization, one may take
$\operatorname{dom}\Phi=\mathbb{R}^d$ and impose only the displayed admissibility
condition on the dual update.

Given an objective function $f$,
we consider a generalized mirror descent update. 
For this aim,
instead of the derivative $\nabla f(\mathbf{x}) \in \mathbb{R}^d$,
we focus on a vector valued function
$\Omega(\mathbf{x})\in \mathbb{R}^d$.
The generalized algorithm is given as
\begin{description}
\item[(1)]
Map to dual space: $\boldsymbol{\theta}_j = \nabla \Phi(\mathbf{x}_j)$
\item[(2)]
Gradient update in dual space: $\boldsymbol{\theta}_{j+1} = \boldsymbol{\theta}_j - 
\eta_j \Omega(\mathbf{x}_j)$
\item[(3)]
Map back to primal space: $\mathbf{x}_{j+1} = (\nabla \Phi)^{-1}(\boldsymbol{\theta}_{j+1})$
\end{description}
That is, when we define the function $F_{\eta \Omega}$ as 
$F_{\eta \Omega}(\mathbf{x}):=(\nabla \Phi)^{-1}(\nabla \Phi(\mathbf{x})-\eta \Omega(\mathbf{x}))=\nabla\Phi^*(\nabla\Phi(\mathbf{x})-\eta\Omega(\mathbf{x}))$, the $j+1$-th element
$\mathbf{x}_{j+1}$ is given as $F_{\eta_j \Omega}(\mathbf{x}_{j})$.
At each iteration $j$, we choose a stepsize $\eta_j>0$ by an arbitrary rule, 
possibly depending on the past iterates and observed function values; 
the constant-stepsize choice $\eta_j\equiv \eta$ is included as a special case.

To describe our result, we introduce the 
Bregman divergence $D_{f,\Omega}(\mathbf{x}, \mathbf{y})$ associated with 
a general differential function 
$f$ is defined as:
\begin{align}
D_{f,\Omega}(\mathbf{x}\| \mathbf{y}) 
:= &
\langle \Omega(\mathbf{y}), \mathbf{y} - \mathbf{x} \rangle
-f(\mathbf{y}) + f(\mathbf{x}) .
\end{align}
When $f$ is a convex and differential function, 
its convex conjugate $f^*$ is defined and
$D_{f,\nabla f}(\mathbf{x}\| \mathbf{y}) $ 
is simplified as
$D_{f}(\mathbf{x}\| \mathbf{y}) $, and  
has the form
\begin{align}
D_{f}(\mathbf{x}\| \mathbf{y})= &
\langle 
\nabla f(\mathbf{x})-\nabla f(\mathbf{y}), \mathbf{x} \rangle
-f^*(\nabla f(\mathbf{x})) + f^*(\nabla f(\mathbf{y})) .
\end{align}

Now, given $\eta>0$, 
we introduce the global generalized relative-smoothness condition of 
$f$ and $\Omega$
with respect to $\Phi$ \cite{LuFreundNesterov2018,BauschkeBolteTeboulle2017};
\begin{align}
\eta D_{f,\Omega}(\mathbf{x}\| \mathbf{y}) \le D_\Phi (\mathbf{x}\| \mathbf{y}) .
\Label{RS}
\end{align}

Our results consist of two parts. The first part develops the general theory for mirror descent driven by a vector field under the global generalized relative-smoothness condition.


\begin{theorem}[Monotonicity for a constant stepsize]\Label{Thm1}
Fix a constant stepsize $\eta>0$ and run the generalized mirror-descent update 
\( \mathbf{x}_{j+1}=F_{\eta\Omega}(\mathbf{x}_j)\) for 
$ j=1,\ldots,t-1$. 
If the objective function $f$ satisfies 
the global generalized relative-smoothness condition \eqref{RS}, then 
\( f(\mathbf{x}_{j+1})\le f(\mathbf{x}_j)\)
for \(j=1,\ldots,t-1\).
\end{theorem}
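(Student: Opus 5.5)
Apply the relative-smoothness condition \eqref{RS} with $\mathbf{x}=\mathbf{x}_{j+1}$ and $\mathbf{y}=\mathbf{x}_j$. Then use the three-point structure of the mirror step to bound the resulting Bregman term, and rearrange.

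First I write out \eqref{RS} at this pair:
\[
\eta\bigl(\langle \Omega(\mathbf{x}_j),\mathbf{x}_j-\mathbf{x}_{j+1}\rangle - f(\mathbf{x}_j)+f(\mathbf{x}_{j+1})\bigr)\le D_\Phi(\mathbf{x}_{j+1}\|\mathbf{x}_j).
\]
Rearranged, this reads
\[
f(\mathbf{x}_{j+1})\le f(\mathbf{x}_j)-\langle \Omega(\mathbf{x}_j),\mathbf{x}_j-\mathbf{x}_{j+1}\rangle+\eta^{-1}D_\Phi(\mathbf{x}_{j+1}\|\mathbf{x}_j).
\]
It therefore suffices to show that $\eta^{-1}D_\Phi(\mathbf{x}_{j+1}\|\mathbf{x}_j)\le \langle \Omega(\mathbf{x}_j),\mathbf{x}_j-\mathbf{x}_{j+1}\rangle$.

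Next I use the update rule. It gives $\eta\,\Omega(\mathbf{x}_j)=\nabla\Phi(\mathbf{x}_j)-\nabla\Phi(\mathbf{x}_{j+1})$. Hence
\[
\eta\langle \Omega(\mathbf{x}_j),\mathbf{x}_j-\mathbf{x}_{j+1}\rangle=\langle \nabla\Phi(\mathbf{x}_j)-\nabla\Phi(\mathbf{x}_{j+1}),\mathbf{x}_j-\mathbf{x}_{j+1}\rangle.
\]
The standard symmetric identity for Bregman divergences says that this quantity equals $D_\Phi(\mathbf{x}_{j+1}\|\mathbf{x}_j)+D_\Phi(\mathbf{x}_j\|\mathbf{x}_{j+1})$. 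To see it, expand both divergences as $\Phi(\mathbf{a})-\Phi(\mathbf{b})-\langle\nabla\Phi(\mathbf{b}),\mathbf{a}-\mathbf{b}\rangle$; the $\Phi$ values cancel. Substituting this into the bound yields
\[
f(\mathbf{x}_{j+1})\le f(\mathbf{x}_j)-\eta^{-1}D_\Phi(\mathbf{x}_j\|\mathbf{x}_{j+1}).
\]
Since $\Phi$ is convex (indeed Legendre), $D_\Phi(\mathbf{x}_j\|\mathbf{x}_{j+1})\ge0$, and monotonicity follows. The argument uses no properties of $\Omega$ beyond \eqref{RS}; the constant stepsize enters only through matching the $\eta$ in \eqref{RS} with the $\eta$ used in the update.

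The main point to check is bookkeeping rather than depth. The orientation of $D_{f,\Omega}(\mathbf{x}\|\mathbf{y})$ must match the paper's convention, with $\Omega$ evaluated at the second argument $\mathbf{y}=\mathbf{x}_j$, which is where the update uses it. I also need the admissibility condition, which places $\mathbf{x}_{j+1}$ in $\operatorname{int}\operatorname{dom}\Phi$ so that $\nabla\Phi(\mathbf{x}_{j+1})$ exists and equals $\boldsymbol{\theta}_{j+1}$; this follows from $(\nabla\Phi)^{-1}=\nabla\Phi^*$ for Legendre $\Phi$. Finally, I induct over $j=1,\ldots,t-1$.
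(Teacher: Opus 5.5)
Your argument is correct, but it is organized differently from the paper's proof of this theorem.

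The paper introduces the two-variable functional $J_{f,\eta,\Omega}(\mathbf{x},\mathbf{y})=\eta(f(\mathbf{x})-D_{f,\Omega}(\mathbf{x}\|\mathbf{y}))+D_\Phi(\mathbf{x}\|\mathbf{y})$ and runs an alternating-minimization chain. Condition \eqref{RS} gives $J_{f,\eta,\Omega}(\mathbf{x},\mathbf{y})\ge J_{f,\eta,\Omega}(\mathbf{x},\mathbf{x})=\eta f(\mathbf{x})$. Lemma~\ref{Lem1}, a three-point identity written through the conjugate $\Phi^*$, shows that $\mathbf{x}\mapsto J_{f,\eta,\Omega}(\mathbf{x},\mathbf{y})$ is minimized at $F_{\eta\Omega}(\mathbf{y})$. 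Together these give $\eta f(\mathbf{x}_j)\ge J_{f,\eta,\Omega}(\mathbf{x}_{j+1},\mathbf{x}_j)\ge\eta f(\mathbf{x}_{j+1})$.

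You instead use only the definition of $D_{f,\Omega}$, the dual update $\eta\Omega(\mathbf{x}_j)=\nabla\Phi(\mathbf{x}_j)-\nabla\Phi(\mathbf{x}_{j+1})$, and the symmetric identity $D_\Phi(\mathbf{a}\|\mathbf{b})+D_\Phi(\mathbf{b}\|\mathbf{a})=\langle\nabla\Phi(\mathbf{a})-\nabla\Phi(\mathbf{b}),\mathbf{a}-\mathbf{b}\rangle$. Your computation reproduces exactly the one-step identity \eqref{eq:Thm2-one-step-identity}, which the paper derives later from Lemma~\ref{Lem2} with $\mathbf{x}=\mathbf{y}=\mathbf{x}_j$ in the proof of Theorem~\ref{Thm2}.

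Your route has several advantages:
\begin{itemize}
\item it is shorter and never needs $\Phi^*$;
\item it gives the quantitative decrease $f(\mathbf{x}_j)-f(\mathbf{x}_{j+1})\ge\eta^{-1}D_\Phi(\mathbf{x}_j\|\mathbf{x}_{j+1})$;
\item it makes explicit that only the realized-pair condition \eqref{RS2}, even with varying $\eta_j$, is needed.
\end{itemize}

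The paper's route buys a variational reading of the update as alternating minimization of $J_{f,\eta,\Omega}$, in the spirit of the Arimoto--Blahut connection in Section~\ref{S3}. It also reuses Lemma~\ref{Lem1}, whose general form with an arbitrary comparison point such as $\mathbf{x}_*$ drives the telescoping in Theorem~\ref{Thm2}. Your symmetric identity is only the special case of that three-point identity in which the comparison point equals $\mathbf{x}_j$.
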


Since the global generalized relative-smoothness condition \eqref{RS}, imposed for all pairs, can be difficult to verify in many applications, we also consider the following trajectory-wise generalized relative-smoothness condition:
\begin{align}
\eta_j D_{f,\Omega}(\mathbf{x}_{j+1}\| \mathbf{x}_{j}) \le 
D_\Phi (\mathbf{x}_{j+1}\| \mathbf{x}_{j}) .
\Label{RS2}
\end{align}
Unlike \eqref{RS}, condition \eqref{RS2} is required only for the realized update pairs $(\mathbf{x}_{j+1},\mathbf{x}_j)$.

To derive an a posteriori guarantee for the achieved objective value, 
we assume that 
$f$ satisfies a generalized condition of the star-convex condition at a minimizer $\mathbf{x}_*$ of $f$ over ${\cal K}$. 
Following \cite{HinderSidfordSohoni2020}, we say that $f$ satisfies 
the generalized star-convex condition
at $\mathbf{x}_*$ on ${\cal K}$ if, for all $\mathbf{x}\in{\cal K}$,
\begin{align}
\langle \Omega (\mathbf{x}), \mathbf{x} - \mathbf{x}_* \rangle
\ge f(\mathbf{x}) - f(\mathbf{x}_*) .
\end{align}
Equivalently, $D_{f,\Omega}(\mathbf{x}_*\|\mathbf{x}) \ge 0$.
This is the special case $\tau=1$-star-convexity used in the recent optimization literature \cite{LezaneLangerKoolen2024}.
This concept is generalized as follows.

\begin{definition}[Generalized star-convexity outside an exceptional region]
Let $\mathbf{x}_*$ be a minimizer of $f$ over $\mathcal K$, and let
$U\subset\mathcal K$ be a prescribed exceptional region containing
$\mathbf{x}_*$.  We say that $f$ is
\emph{generalized star-convex on $\mathcal K\setminus U$ with respect
to $\Omega$} if, for all $\mathbf{x}\in \mathcal K\setminus U$,
\[
\langle \Omega(\mathbf{x}),\, \mathbf{x}-\mathbf{x}_* \rangle \ge f(\mathbf{x})-f(\mathbf{x}_*).
\]
\end{definition}

Then, the following theorem gives 
a posteriori certification of the MD algorithm. 

\begin{theorem}\Label{Thm2}
Let $t\ge2$. Let $\mathbf{x}_*$ be a minimizer of $f$ over $\mathcal K$, 
and let $U\subset\mathcal K$ be a prescribed exceptional region containing $\mathbf{x}_*$.
Assume that $f$ is
generalized star-convex on $\mathcal K\setminus U$ with respect to $\Omega$ and
that the algorithm iterates in $\mathcal K$.  When the obtained points
$\mathbf{x}_1,\ldots,\mathbf{x}_t$ satisfy the trajectory-wise generalized relative-smoothness
condition \eqref{RS2} for $j=1,\ldots,t-1$, we have
\begin{align}
f(\mathbf{x}_t)-f(\mathbf{x}_*)
\le \max\left\{
\frac{D_\Phi(\mathbf{x}_*\|\mathbf{x}_1)}{\sum_{j=1}^{t-1}\eta_j},
\sup_{\mathbf{x}\in U}\bigl(f(\mathbf{x})-f(\mathbf{x}_*)\bigr)
\right\}.
\Label{DJ1}
\end{align}
\end{theorem}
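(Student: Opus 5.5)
The proof combines the Bregman three-point identity with the trajectory-wise condition \eqref{RS2}. This yields two facts: monotonicity of $f$ along the realized trajectory, and a one-step telescoping inequality valid at iterates outside $U$. A case split on whether the trajectory enters $U$ then gives \eqref{DJ1}.

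\emph{Step 1 (monotonicity).} Fix $j\le t-1$ and write $\eta=\eta_j$. The update gives $\nabla\Phi(\mathbf{x}_j)-\nabla\Phi(\mathbf{x}_{j+1})=\eta\Omega(\mathbf{x}_j)$. By the standard symmetrization of the Bregman divergence,
\[
\eta\langle\Omega(\mathbf{x}_j),\mathbf{x}_j-\mathbf{x}_{j+1}\rangle
=D_\Phi(\mathbf{x}_{j+1}\|\mathbf{x}_j)+D_\Phi(\mathbf{x}_j\|\mathbf{x}_{j+1}).
\]
Condition \eqref{RS2} reads
\[
\eta\bigl(\langle\Omega(\mathbf{x}_j),\mathbf{x}_j-\mathbf{x}_{j+1}\rangle-f(\mathbf{x}_j)+f(\mathbf{x}_{j+1})\bigr)\le D_\Phi(\mathbf{x}_{j+1}\|\mathbf{x}_j).
\]
Substituting the identity gives $\eta\bigl(f(\mathbf{x}_{j+1})-f(\mathbf{x}_j)\bigr)\le -D_\Phi(\mathbf{x}_j\|\mathbf{x}_{j+1})\le 0$. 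Hence $f(\mathbf{x}_1)\ge f(\mathbf{x}_2)\ge\cdots\ge f(\mathbf{x}_t)$. This is the trajectory-wise analogue of Theorem~\ref{Thm1}.

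\emph{Step 2 (one-step inequality).} The three-point identity, combined with the update, gives
\[
D_\Phi(\mathbf{x}_*\|\mathbf{x}_{j+1})=D_\Phi(\mathbf{x}_*\|\mathbf{x}_j)-D_\Phi(\mathbf{x}_{j+1}\|\mathbf{x}_j)+\eta\langle\Omega(\mathbf{x}_j),\mathbf{x}_*-\mathbf{x}_{j+1}\rangle .
\]
Split $\mathbf{x}_*-\mathbf{x}_{j+1}=(\mathbf{x}_j-\mathbf{x}_{j+1})-(\mathbf{x}_j-\mathbf{x}_*)$. Bound the first piece using \eqref{RS2}: $\eta\langle\Omega(\mathbf{x}_j),\mathbf{x}_j-\mathbf{x}_{j+1}\rangle\le D_\Phi(\mathbf{x}_{j+1}\|\mathbf{x}_j)+\eta\bigl(f(\mathbf{x}_j)-f(\mathbf{x}_{j+1})\bigr)$. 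If $\mathbf{x}_j\in\mathcal K\setminus U$, generalized star-convexity bounds the second piece: $-\eta\langle\Omega(\mathbf{x}_j),\mathbf{x}_j-\mathbf{x}_*\rangle\le-\eta\bigl(f(\mathbf{x}_j)-f(\mathbf{x}_*)\bigr)$. The $D_\Phi(\mathbf{x}_{j+1}\|\mathbf{x}_j)$ terms cancel, and so do the $f(\mathbf{x}_j)$ terms. This leaves
\[
\eta_j\bigl(f(\mathbf{x}_{j+1})-f(\mathbf{x}_*)\bigr)\le D_\Phi(\mathbf{x}_*\|\mathbf{x}_j)-D_\Phi(\mathbf{x}_*\|\mathbf{x}_{j+1}).
\]

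\emph{Step 3 (case split).} Suppose first that some $\mathbf{x}_j\in U$ with $j\le t$. By Step 1, $f(\mathbf{x}_t)\le f(\mathbf{x}_j)$, so $f(\mathbf{x}_t)-f(\mathbf{x}_*)$ is at most the supremum over $U$. Otherwise $\mathbf{x}_1,\ldots,\mathbf{x}_{t-1}\in\mathcal K\setminus U$. Summing the Step~2 inequality over $j=1,\ldots,t-1$ and discarding $D_\Phi(\mathbf{x}_*\|\mathbf{x}_t)\ge0$ gives
\[
\sum_{j=1}^{t-1}\eta_j\bigl(f(\mathbf{x}_{j+1})-f(\mathbf{x}_*)\bigr)\le D_\Phi(\mathbf{x}_*\|\mathbf{x}_1).
\]
By monotonicity, each $f(\mathbf{x}_{j+1})\ge f(\mathbf{x}_t)$, so the left side is at least $\bigl(\sum_{j=1}^{t-1}\eta_j\bigr)\bigl(f(\mathbf{x}_t)-f(\mathbf{x}_*)\bigr)$. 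This yields the first term in \eqref{DJ1}. The bound is trivial if $D_\Phi(\mathbf{x}_*\|\mathbf{x}_1)=\infty$.

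The main obstacle is Step 2: the signs must be arranged so that the same inequality \eqref{RS2} serves twice, once to produce monotonicity and once to cancel $D_\Phi(\mathbf{x}_{j+1}\|\mathbf{x}_j)$. It also matters that star-convexity is applied at $\mathbf{x}_j$, not at $\mathbf{x}_{j+1}$. This is exactly why the case split only needs $\mathbf{x}_1,\ldots,\mathbf{x}_{t-1}\notin U$ in the second case.
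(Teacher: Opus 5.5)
Your proof is correct and is essentially the paper's argument: monotonicity from \eqref{RS2}, the one-step inequality from \eqref{RS2} combined with generalized star-convexity at $\mathbf{x}_j$, and the same case split on whether the trajectory meets $U$, followed by telescoping with monotonicity. The only difference is that you derive the key identity directly from the primal Bregman three-point identity, whereas the paper obtains the same identity (Lemma~\ref{Lem2}) via the conjugate-based Lemma~\ref{Lem1}; your route is slightly more elementary, since it never evaluates $\nabla\Phi$ at $\mathbf{x}_*$.
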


For every fixed $t$, the bound \eqref{DJ1} applies to the computed iterates
$\mathbf{x}_1,\ldots,\mathbf{x}_t$, provided that \eqref{RS2} is verified for
the performed steps $j=1,\ldots,t-1$.  Its first term is evaluated using the
accumulated stepsize $\sum_{j=1}^{t-1}\eta_j$ of those steps.
Theorem~\ref{Thm2} does not impose a particular rule for choosing the
stepsizes and therefore does not by itself guarantee that this accumulated
stepsize diverges as $t\to\infty$.  If, in addition,
\[
\sum_{j=1}^{\infty}\eta_j=\infty,
\]
then the first term in \eqref{DJ1} converges to zero, and hence
\[
\limsup_{t\to\infty}
\bigl(f(\mathbf{x}_t)-f(\mathbf{x}_*)\bigr)
\le
\sup_{\mathbf{x}\in U}
\bigl(f(\mathbf{x})-f(\mathbf{x}_*)\bigr).
\]

In the gradient case $\Omega=\nabla f$, the proof mechanism of Theorem~\ref{Thm2}
reduces to the standard mirror-descent analysis based on the Bregman three-point identity,
which is used to telescope the key inequality (cf.\ the step from (10) to (11) in \cite{FawziLecture11Bregman2023}).
Accordingly, Lemma~\ref{Lem1} can be seen as an $\Omega$-driven analogue of that three-point-identity step.
Our contribution is not the gradient-case algebra itself, but the $\Omega$-driven formulation and the
trajectory-wise (a posteriori) certification condition \eqref{RS2}, which remain meaningful for deterministic
zeroth-order oracles.
Replacing the condition \eqref{RS2} by \eqref{RS},
we obtain a priori certification version as follows.

\begin{corollary}\Label{ThmB2}
Assume that the objective function $f$ satisfies
the global generalized relative-smoothness condition \eqref{RS}.
When $f$ is star-convex at $\mathbf{x}_*$ with a set ${\cal K}$ containing $\mathbf{x}_*$
and 
the algorithm iterates in ${\cal K}$, 
we have \begin{align}
 f(\mathbf{x}_t)- f(\mathbf{x}_*)
\le \frac{D_\Phi(\mathbf{x}_* \| \mathbf{x}_1)}{\eta(t-1)}.\Label{DJ1B}
\end{align}
\end{corollary}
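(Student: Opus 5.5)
Corollary~\ref{ThmB2} reduces to Theorem~\ref{Thm2} with $U=\{\mathbf{x}_*\}$ and constant stepsize $\eta_j\equiv\eta$; we read ``star-convex'' here as the generalized star-convex condition with respect to $\Omega$, stated before the definition of the exceptional region.

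\textbf{Step 1: the two hypotheses.} We check that the hypotheses of Theorem~\ref{Thm2} hold.
\begin{itemize}
\item[(i)] The global condition \eqref{RS} holds for all pairs. Applying it to the realized pairs $(\mathbf{x}_{j+1},\mathbf{x}_j)$ with $\eta_j=\eta$ gives exactly the trajectory-wise condition \eqref{RS2} for $j=1,\ldots,t-1$.
\item[(ii)] Generalized star-convexity on all of $\mathcal K$ implies generalized star-convexity on $\mathcal K\setminus U$ for any $U\ni\mathbf{x}_*$. In particular it holds for $U=\{\mathbf{x}_*\}$, which is a legitimate exceptional region containing the minimizer.
\end{itemize}
The iterates lie in $\mathcal K$ by assumption. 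We take $t\ge2$, as in Theorem~\ref{Thm2}, so that $t-1\ge1$ and the bound is meaningful.

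\textbf{Step 2: evaluate \eqref{DJ1}.} With $U=\{\mathbf{x}_*\}$ we have
\[
\sup_{\mathbf{x}\in U}\bigl(f(\mathbf{x})-f(\mathbf{x}_*)\bigr)=0.
\]
With constant stepsize, $\sum_{j=1}^{t-1}\eta_j=\eta(t-1)$. The first term $D_\Phi(\mathbf{x}_*\|\mathbf{x}_1)/(\eta(t-1))$ is nonnegative, since $\Phi$ is convex and so the Bregman divergence is nonnegative. Hence the maximum in \eqref{DJ1} equals this first term, which yields \eqref{DJ1B}.

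\textbf{Main obstacle.} There is no real analytic difficulty. The care needed is interpretive: making sure ``star-convex'' is understood in the $\Omega$-generalized sense required by Theorem~\ref{Thm2}, and that a singleton $U$ is admissible.

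If instead one wanted a direct proof, one would use the $\Omega$-analogue of the three-point identity (Lemma~\ref{Lem1}). Combined with \eqref{RS} and star-convexity, it gives
\[
\eta\bigl(f(\mathbf{x}_{j+1})-f(\mathbf{x}_*)\bigr)\le D_\Phi(\mathbf{x}_*\|\mathbf{x}_j)-D_\Phi(\mathbf{x}_*\|\mathbf{x}_{j+1}).
\]
Summing over $j$ telescopes the right-hand side. Monotonicity from Theorem~\ref{Thm1} then lets the last iterate $f(\mathbf{x}_t)$ bound every summand from below, which gives the same conclusion.
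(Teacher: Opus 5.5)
Your proposal is correct and follows the paper's route. The paper obtains Corollary~\ref{ThmB2} directly from Theorem~\ref{Thm2}: the global condition \eqref{RS} with $\eta_j\equiv\eta$ implies \eqref{RS2} along the realized trajectory, and your choice $U=\{\mathbf{x}_*\}$ makes the floor term vanish, leaving the nonnegative term $D_\Phi(\mathbf{x}_*\|\mathbf{x}_1)/(\eta(t-1))$ as the maximum (with $t\ge2$ and ``star-convex'' read in the $\Omega$-generalized sense, as intended).
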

This corollary can be considered as a generalization of 
the convexity case shown in \cite[Theorem 11.1]{FawziLecture11Bregman2023}.

\section{Relation to generalized Arimoto-Blahut algorithm}\Label{S3}
In the area of information theory, 
Arimoto-Blahut algorithm \cite{Arimoto,Blahut} is known.
The recent paper \cite{Ramakrishnan} extended it into a generalized form, which is called 
the generalized Arimoto-Blahut algorithm including quantum systems.
A posteriori certification specialized to \emph{generalized quantum Arimoto--Blahut} (QAB) algorithms is developed in the companion paper~\cite{LiuHayashi2026QABCert} while the present paper focuses on a general vector-field mirror-descent template and its deterministic zeroth-order instantiation.
Later, the paper \cite{Hayashi2025b} extended it for the Bregman divergence system through 
several related studies \cite{Hayashi2025,Hayashi2024}.
To describe the generalized Arimoto-Blahut algorithm under the 
Bregman divergence system,
we consider $d+1$ differential functions
$\Omega^0(\mathbf{x}), \Omega^1(\mathbf{x}), \ldots, \Omega^d(\mathbf{x})$ 
on $\mathbb{R}^d$.
The objective function $f$ is assumed to be 
\begin{align}
f_{\Omega}(\mathbf{x})= \Omega^0(\mathbf{x})+ 
\sum_{j=1}^d x_j \Omega^j(\mathbf{x}).
\end{align}
We set the vector-valued function $\Omega_d(\mathbf{x})$
as $(\Omega^1(\mathbf{x}), \ldots, \Omega^d(\mathbf{x}))$.
Then, we have
\begin{align}
D_{f_\Omega,\Omega_d}(\mathbf{x}\|\mathbf{y})
= \Omega^0(\mathbf{x})- \Omega^0(\mathbf{y})
+\sum_{j=1}^d x_j (\Omega^j(\mathbf{x})- \Omega^j(\mathbf{y})),
\end{align}
which is simplified as $D_{\Omega}(\mathbf{x}\|\mathbf{y})$.
We impose the following condition;
\begin{align}
\eta D_{\Omega}(\mathbf{x}\|\mathbf{y})
\le D_\Phi(\mathbf{x}\|\mathbf{y}). \Label{NMM}
\end{align}
Under the condition \eqref{NMM}
with the Bregman divergence system,
the generalized Arimoto-Blahut algorithm is given as the updating rule
$\mathbf{x}_{t+1}= F_\Omega (\mathbf{x}_t)$,
where $F_\Omega$ is defined as
$F_\Omega(\mathbf{x}):=\nabla \Phi^{-1}(\nabla \Phi(\mathbf{x})-\eta 
\vec{\Omega}(\mathbf{x}))$, and 
$\vec{\Omega} (\mathbf{x})$ is a vector 
$(\Omega^1(\mathbf{x}), \ldots, \Omega^d(\mathbf{x}))$.
We have the following lemma.

\begin{lemma} 
Assume that $\Omega^0,\Omega^1,\ldots,\Omega^d$ are differentiable on an open set and that \eqref{NMM} holds for all pairs in this set. Then, 
the relation
\( \Omega_d(\mathbf{x})=\nabla f_\Omega(\mathbf{x}) \) 
holds at every point $\mathbf{x}$ of the set. 
\end{lemma}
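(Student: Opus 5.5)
The idea is to expand both sides of \eqref{NMM} to first order along a short segment. The Bregman divergence $D_\Phi$ vanishes to second order on the diagonal, whereas $D_\Omega$ has a first-order term proportional to $\nabla f_\Omega-\Omega_d$. The one-sided inequality, applied in two opposite directions, will then force this first-order term to vanish.

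First I rewrite $D_\Omega$ in the standard "function minus linearization" form. Expanding $f_\Omega(\mathbf{x})-f_\Omega(\mathbf{y})$ and adding $\langle \Omega_d(\mathbf{y}),\mathbf{y}-\mathbf{x}\rangle=\sum_j (y_j-x_j)\Omega^j(\mathbf{y})$ reproduces the displayed formula for $D_{f_\Omega,\Omega_d}$. Hence
\[
D_\Omega(\mathbf{x}\|\mathbf{y})=f_\Omega(\mathbf{x})-f_\Omega(\mathbf{y})-\langle \Omega_d(\mathbf{y}),\mathbf{x}-\mathbf{y}\rangle .
\]
Since $\Omega^0,\ldots,\Omega^d$ are differentiable, $f_\Omega$ is differentiable on the open set, as a sum of products of differentiable functions.

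Next, fix a point $\mathbf{y}$ of the open set and a direction $\mathbf{v}\in\mathbb{R}^d$. Put $\mathbf{x}=\mathbf{y}+s\mathbf{v}$ with $s>0$ small enough that $\mathbf{x}$ stays in the set. By differentiability of $f_\Omega$,
\[
D_\Omega(\mathbf{y}+s\mathbf{v}\|\mathbf{y})=s\,\langle \nabla f_\Omega(\mathbf{y})-\Omega_d(\mathbf{y}),\mathbf{v}\rangle+o(s).
\]
Since $\Phi$ is Legendre, it is differentiable at interior points, so
\[
D_\Phi(\mathbf{y}+s\mathbf{v}\|\mathbf{y})=\Phi(\mathbf{y}+s\mathbf{v})-\Phi(\mathbf{y})-s\langle\nabla\Phi(\mathbf{y}),\mathbf{v}\rangle=o(s).
\]
Here I assume, as implicit in the setting, that the set lies in $\operatorname{int}\operatorname{dom}\Phi$. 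Inserting both expansions into \eqref{NMM}, dividing by $\eta s>0$ and letting $s\downarrow 0$ gives
\[
\langle \nabla f_\Omega(\mathbf{y})-\Omega_d(\mathbf{y}),\mathbf{v}\rangle\le 0 .
\]

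Finally, I apply the same argument with $-\mathbf{v}$ in place of $\mathbf{v}$, which gives the reverse inequality. So the inner product is $0$ for every $\mathbf{v}$, and hence $\Omega_d(\mathbf{y})=\nabla f_\Omega(\mathbf{y})$.

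There is no real obstacle. The only points needing care are:
\begin{itemize}
\item $D_\Phi$ is $o(s)$ rather than merely $O(s)$, which is exactly first-order differentiability of $\Phi$ at $\mathbf{y}$;
\item openness of the set, so that both $\mathbf{y}\pm s\mathbf{v}$ are admissible;
\item strict positivity of $\eta$, which allows dividing by $\eta s$.
\end{itemize}
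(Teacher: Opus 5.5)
Your proof is correct and uses the same mechanism as the paper: a first-order expansion of both sides of \eqref{NMM} along two opposite directions, with $D_\Phi$ vanishing to first order, so the linear term of $D_\Omega$ is forced to be zero. The only difference is where the perturbation goes. You perturb the first argument, $D_\Omega(\mathbf{y}+s\mathbf{v}\|\mathbf{y})$, whereas the paper perturbs the second, $D_\Omega(\mathbf{x}\|\mathbf{x}\pm\epsilon\mathbf{e}_j)$. As a result you read off $\nabla f_\Omega-\Omega_d$ directly, instead of proving the equivalent identity \eqref{NM2} and invoking the product rule. Your estimate $D_\Phi=o(s)$ also needs only differentiability of $\Phi$ at the base point, while the paper's slot choice tacitly uses continuity of $\nabla\Phi$ (which holds because a differentiable convex function is $C^1$ on open sets).
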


This lemma shows that 
the generalized Arimoto-Blahut algorithm coincides with 
the mirror descent under the condition \eqref{NMM}
while the paper \cite{He} showed the same fact 
for the quantum setting under the convexity of $f_\Omega$.
Since the same results as Theorems \ref{Thm1} and \ref{Thm2}
were shown for the generalized Arimoto-Blahut algorithm 
by \cite{Hayashi2025b},
Theorems \ref{Thm1} and \ref{Thm2}
can be considered as generalizations of the results in \cite{Hayashi2025b}.

\begin{proof}
Since it is sufficient to show the relation
$\frac{\partial }{\partial x_j}  f_\Omega(\mathbf{x})
=\Omega^j(\mathbf{x})$
for $j=1,\ldots, d$.
This relation can be shown from the relation
\begin{align}
\frac{\partial }{\partial x_j} \Omega^0(\mathbf{x})+ 
\sum_{j'=1}^d x_{j'} \frac{\partial }{\partial x_j}\Omega^{j'}(\mathbf{x})=0.
\Label{NM2}
\end{align}
for $j=1,\ldots, d$.
Hence, we will show \eqref{NM2}.

Differentiability implies
\begin{align} 
D_\Omega(\mathbf{x}\| \mathbf{x}\pm\epsilon\mathbf{e}_j) 
=\mp \left( \frac{\partial}{\partial x_j}\Omega^0(\mathbf{x}) + \sum_{j'=1}^d x_{j'} \frac{\partial}{\partial x_j} \Omega^{j'}(\mathbf{x}) \right)\epsilon +o(\epsilon). 
\Label{eq:NMM-first-order-expansion} \end{align} 
Moreover, differentiability of $\Phi$ gives 
\( D_\Phi(\mathbf{x}\| \mathbf{x}\pm\epsilon\mathbf{e}_j) =o(\epsilon). \) 
Applying \eqref{NMM}, dividing by $\epsilon>0$, and letting $\epsilon\downarrow0$, we obtain 
\begin{align} 
\mp\eta \left( \frac{\partial}{\partial x_j}\Omega^0(\mathbf{x}) + \sum_{j'=1}^d x_{j'} \frac{\partial}{\partial x_j} \Omega^{j'}(\mathbf{x}) \right) \le0. 
\end{align} 
Since this inequality holds for both signs, the expression in parentheses is zero. Hence, \eqref{NM2} holds.
\qed\end{proof}

\section{Norm-Pair Preliminaries}\Label{S5:toolbox}
To proceed further analysis, we prepare 
notations for norm-pair.
Let $\|\cdot\|$ be a norm on $\mathbb{R}^d$, and let
\[
\|\mathbf{z}\|_*
:=
\sup_{\|\mathbf{x}\|\le1}
\langle\mathbf{z},\mathbf{x}\rangle
\]
be its dual norm.  We record the norm-pair facts used in the proof of
Theorem~\ref{thm:refined_eta_general_phi}.

For a differentiable function $g$, its Bregman divergence is
\begin{equation}
D_g(\mathbf{x}\|\mathbf{y})
:=
g(\mathbf{x})-g(\mathbf{y})
-\langle\nabla g(\mathbf{y}),\mathbf{x}-\mathbf{y}\rangle.
\Label{eq:Bregman_def_toolbox}
\end{equation}

A differentiable convex function $g$ is $\mu$-strongly convex with respect
to $\|\cdot\|$ if
\[
g(\mathbf{x})
\ge
g(\mathbf{y})
+\langle\nabla g(\mathbf{y}),\mathbf{x}-\mathbf{y}\rangle
+\frac{\mu}{2}\|\mathbf{x}-\mathbf{y}\|^2.
\]
Equivalently,
\begin{equation}
D_g(\mathbf{x}\|\mathbf{y})
\ge
\frac{\mu}{2}\|\mathbf{x}-\mathbf{y}\|^2.
\Label{eq:strong-convexity-bregman-lower}
\end{equation}

A differentiable function $g$ is $L$-smooth with respect to $\|\cdot\|$ if
\begin{equation}
\|\nabla g(\mathbf{x})-\nabla g(\mathbf{y})\|_*
\le
L\|\mathbf{x}-\mathbf{y}\|.
\Label{eq:norm-pair-smoothness}
\end{equation}
The corresponding descent inequality is
\begin{equation}
D_g(\mathbf{x}\|\mathbf{y})
\le
\frac{L}{2}\|\mathbf{x}-\mathbf{y}\|^2.
\Label{eq:Lsmooth_bregman_upper}
\end{equation}

We also use the standard conjugacy relations.  If $\Phi$ is
$\sigma$-strongly convex with respect to $\|\cdot\|$, then $\Phi^*$ is
$(1/\sigma)$-smooth with respect to $\|\cdot\|_*$; in particular,
\begin{equation}
\|\nabla\Phi^*(\mathbf{u})-\nabla\Phi^*(\mathbf{v})\|
\le
\frac{1}{\sigma}\|\mathbf{u}-\mathbf{v}\|_*.
\Label{eq:conjugate_grad_Lip}
\end{equation}
If $\Phi$ is $\beta$-smooth with respect to $\|\cdot\|$, then $\Phi^*$ is
$(1/\beta)$-strongly convex with respect to $\|\cdot\|_*$, and hence
\begin{equation}
D_{\Phi^*}(\mathbf{u}\|\mathbf{v})
\ge
\frac{1}{2\beta}\|\mathbf{u}-\mathbf{v}\|_*^2.
\Label{eq:conjugate_bregman_lower}
\end{equation}

\section{Feasibility of the trajectory-wise generalized relative-smoothness condition}\Label{S5}
This section clarifies how the trajectory-wise generalized relative-smoothness condition \eqref{RS2} can be
\emph{made feasible} and how its admissible stepsizes scale with problem and oracle
parameters.
In our framework, stepsizes may be selected by a certificate-driven procedure
(e.g., backtracking or a short geometric grid search) that directly checks
\eqref{RS2} along the realized iterates.
Therefore, we do not insist on computing an explicit a priori bound on $\eta$
for every application.
Instead, the purpose of the present section is to provide a \emph{feasibility guarantee}
for the certificate: under standard smoothness assumptions and mild geometric regularity
of the mirror map, there always exists a nontrivial range of sufficiently small stepsizes
for which the trajectory-wise generalized relative-smoothness condition holds.
This justifies certificate-driven line search (it must terminate) and explains the role
of the oracle mismatch through an explicit scaling law.

To discuss such feasibility analytically, we assume that the mirror map $\Phi$ is
$\sigma$-strongly convex and $\beta$-smooth with respect to the primal norm $\|\cdot\|$
for some constants $0<\sigma\le \beta<\infty$.
We also quantify the oracle mismatch by the relative error ratio

\begin{equation}
\Label{eq:delta_def}
\delta(\mathbf{y}):=
\begin{cases}
\dfrac{\|\Omega(\mathbf{y})-\nabla f(\mathbf{y})\|_*}
{\|\Omega(\mathbf{y})\|_*},&\Omega(\mathbf{y})\ne0,\\[1ex]
0,&\Omega(\mathbf{y})=\nabla f(\mathbf{y})=0,\\
+\infty,&\Omega(\mathbf{y})=0\ne\nabla f(\mathbf{y}).
\end{cases}
\end{equation}

As shown in Section \ref{S-thm:refined_eta_general_phi}, we have the following result.
\begin{theorem}[A sufficient stepsize condition for the trajectory-wise generalized relative-smoothness condition]
\Label{thm:refined_eta_general_phi}
Assume that $f$ is $L$-smooth with respect to the norm $\| \cdot \|$.
When $\eta$ satisfies
\begin{equation}
\Label{eq:eta_condition}
\eta \le \frac{\sigma^2}{L\beta}\left(1-\frac{2\beta}{\sigma}\delta(\mathbf{y})\right),
\end{equation}
then the trajectory-wise generalized relative-smoothness condition holds, i.e.,
\begin{equation}
\Label{eq:RS_general}
\eta\, D_{f,\Omega}(\mathbf{x}\|\mathbf{y}) \le D_\Phi(\mathbf{x}\|\mathbf{y}),
\qquad \text{for }x=F_{\eta\Omega}(\mathbf{y}).
\end{equation}
\end{theorem}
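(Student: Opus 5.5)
The plan is to write $\mathbf{x}=F_{\eta\Omega}(\mathbf{y})$, so that $\nabla\Phi(\mathbf{x})=\nabla\Phi(\mathbf{y})-\eta\Omega(\mathbf{y})$. I would bound $\eta D_{f,\Omega}(\mathbf{x}\|\mathbf{y})$ from above and $D_\Phi(\mathbf{x}\|\mathbf{y})$ from below, both by multiples of $\|\mathbf{x}-\mathbf{y}\|^2$. The admissible stepsize then comes out of comparing the two constants.

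\emph{Degenerate cases.} If $\Omega(\mathbf{y})=0$ and $\nabla f(\mathbf{y})=0$, then $\mathbf{x}=\mathbf{y}$ and both sides of \eqref{eq:RS_general} vanish. If $\delta(\mathbf{y})=+\infty$, the right-hand side of \eqref{eq:eta_condition} is negative, so no $\eta>0$ satisfies the hypothesis and there is nothing to prove. More generally, if $\delta(\mathbf{y})\ge \sigma/(2\beta)$ the hypothesis is vacuous. Hence I may assume $\Omega(\mathbf{y})\neq0$ and $\delta(\mathbf{y})<\sigma/(2\beta)$.

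\emph{Upper bound on the left side.} Split
\[
D_{f,\Omega}(\mathbf{x}\|\mathbf{y})=D_f(\mathbf{x}\|\mathbf{y})+\langle\nabla f(\mathbf{y})-\Omega(\mathbf{y}),\mathbf{x}-\mathbf{y}\rangle .
\]
The descent inequality \eqref{eq:Lsmooth_bregman_upper} and the definition of the dual norm give
\[
D_{f,\Omega}(\mathbf{x}\|\mathbf{y})\le \frac L2\|\mathbf{x}-\mathbf{y}\|^2+\delta(\mathbf{y})\,\|\Omega(\mathbf{y})\|_*\,\|\mathbf{x}-\mathbf{y}\|.
\]

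\emph{Removing the linear term.} This is the step I expect to carry the argument: the mismatch term is only linear in the displacement, so it must be traded for a quadratic one. The $\beta$-smoothness of $\Phi$, i.e. \eqref{eq:norm-pair-smoothness} applied to $\Phi$, gives
\[
\eta\|\Omega(\mathbf{y})\|_*=\|\nabla\Phi(\mathbf{x})-\nabla\Phi(\mathbf{y})\|_*\le\beta\|\mathbf{x}-\mathbf{y}\| .
\]
Substituting this, I obtain
\[
\eta D_{f,\Omega}(\mathbf{x}\|\mathbf{y})\le\Bigl(\frac{\eta L}{2}+\beta\delta(\mathbf{y})\Bigr)\|\mathbf{x}-\mathbf{y}\|^2 .
\]

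\emph{Lower bound on the right side and conclusion.} Strong convexity \eqref{eq:strong-convexity-bregman-lower} gives $D_\Phi(\mathbf{x}\|\mathbf{y})\ge\frac\sigma2\|\mathbf{x}-\mathbf{y}\|^2$. Therefore \eqref{eq:RS_general} holds as soon as $\eta L+2\beta\delta(\mathbf{y})\le\sigma$, that is,
\[
\eta\le\frac{\sigma}{L}\Bigl(1-\frac{2\beta}{\sigma}\delta(\mathbf{y})\Bigr).
\]
Since $\sigma\le\beta$, the bound \eqref{eq:eta_condition} is at most this quantity, so it is sufficient.

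\emph{Alternative route.} If one prefers to match the stated constant $\sigma^2/(L\beta)$ exactly, I would instead use the duality identity $D_\Phi(\mathbf{x}\|\mathbf{y})=D_{\Phi^*}(\nabla\Phi(\mathbf{y})\|\nabla\Phi(\mathbf{x}))$. With \eqref{eq:conjugate_bregman_lower} this gives $D_\Phi\ge\eta^2\|\Omega(\mathbf{y})\|_*^2/(2\beta)$. Combining it with $\|\mathbf{x}-\mathbf{y}\|\le\eta\|\Omega(\mathbf{y})\|_*/\sigma$ from \eqref{eq:conjugate_grad_Lip}, every term becomes a multiple of $\eta^2\|\Omega(\mathbf{y})\|_*^2$, and the condition reduces to exactly \eqref{eq:eta_condition}. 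Either route proves the statement.
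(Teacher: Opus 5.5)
Your proof is correct. Your main route differs from the paper's proof, which is exactly your ``alternative route''.

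\textbf{The paper's route.} It measures both sides against the dual displacement $\eta^2\|\Omega(\mathbf{y})\|_*^2$. It bounds $\|\mathbf{x}-\mathbf{y}\|\le\eta\|\Omega(\mathbf{y})\|_*/\sigma$ via \eqref{eq:conjugate_grad_Lip}, and $D_\Phi(\mathbf{x}\|\mathbf{y})\ge\eta^2\|\Omega(\mathbf{y})\|_*^2/(2\beta)$ via Bregman duality and \eqref{eq:conjugate_bregman_lower}. This lands precisely on the stated constant $\sigma^2/(L\beta)$.

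\textbf{Your main route.} You measure everything against the primal displacement $\|\mathbf{x}-\mathbf{y}\|^2$. Strong convexity of $\Phi$ gives the lower bound on $D_\Phi$ directly, and the $\beta$-smoothness of $\Phi$ is used only to absorb the linear mismatch term. No conjugate facts are needed beyond $\nabla\Phi(\mathbf{x})=\nabla\Phi(\mathbf{y})-\eta\Omega(\mathbf{y})$.

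\textbf{What your route buys.} It proves a stronger statement. Your admissible range $\eta\le\frac{\sigma}{L}\bigl(1-\frac{2\beta}{\sigma}\delta(\mathbf{y})\bigr)$ is larger than the stated one by the factor $\beta/\sigma$, with the same mismatch threshold $\delta(\mathbf{y})<\sigma/(2\beta)$. For $\Omega=\nabla f$ it recovers the classical relative-smoothness step $\sigma/L$.

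The paper loses this factor in the quadratic term. It bounds the displacement from above with $1/\sigma$, which enters squared in the $L$-term, while bounding $D_\Phi$ from below with $1/(2\beta)$. Your argument therefore shows that the mirror-map conditioning $\beta/\sigma$ is needed only in the mismatch tolerance $2\beta\delta(\mathbf{y})/\sigma$, not in the stepsize prefactor. This sharpens the paper's remark that the admissible scale shrinks with $\beta/\sigma$.

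\textbf{Degenerate cases.} Your treatment is a bit more explicit than the paper's. You note that $\delta(\mathbf{y})=+\infty$ or $\delta(\mathbf{y})\ge\sigma/(2\beta)$ makes the hypothesis vacuous for $\eta>0$, and you handle $\Omega(\mathbf{y})=\nabla f(\mathbf{y})=0$ separately. The paper uses $\|\nabla f(\mathbf{y})-\Omega(\mathbf{y})\|_*=\delta(\mathbf{y})\|\Omega(\mathbf{y})\|_*$ without comment.
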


Theorem~\ref{thm:refined_eta_general_phi} should be viewed as a \emph{feasibility lemma}
for the certificate rather than a prescription that must be evaluated tightly in each application
when $\|\Omega(\mathbf{y})\|_*$ is not so small, i.e., the point 
$\mathbf{y}$ does not belong to the neighborhood of $\mathbf{x}_*$.
It ensures that, whenever $\delta(\mathbf{y})<\sigma/(2\beta)$, there exists a positive stepsize
range (in particular, sufficiently small $\eta$) 
for which \eqref{eq:RS_general} holds.
Consequently, a certificate-driven stepsize selection that checks \eqref{RS2}
(e.g., by decreasing $\eta$) is well-posed and guaranteed to find an admissible stepsize.
Moreover, \eqref{eq:eta_condition} makes explicit how the admissible scale of $\eta$
shrinks with the smoothness constant $L$, the mirror-map conditioning $\beta/\sigma$,
and the oracle mismatch $\delta(\mathbf{y})$.
When combined with the star-convexity interface condition, this feasibility guarantee
enables the certified last-iterate bound \eqref{DJ1}.

\section{Deterministic Finite-Difference Mirror Descent}\Label{S4}
This section instantiates the general $\Omega$-driven mirror-descent template with a
deterministic central finite-difference vector field.
Our goal is to apply the certified last-iterate bound of Theorem~\ref{Thm2} to this
zeroth-order construction.
To do so, we need two ingredients: (i) the trajectory-wise generalized relative-smoothness condition \eqref{RS2}
for the chosen stepsizes, and (ii) a generalized star-convexity interface outside
an exceptional region, of the form $\langle \Omega(\mathbf{x}),\mathbf{x}-\mathbf{x}_*\rangle \ge f(\mathbf{x})-f(\mathbf{x}_*)$ outside the exceptional region.
The key technical step is to manufacture (ii) for the finite-difference vector field:
we reduce it to a robust conic-dominance design problem (solved by Theorem~\ref{thm:alpha})
combined with a standard angle lower bound between $\mathbf{x}-\mathbf{x}_*$ and $\nabla f(\mathbf{x})$ under
Hessian bounds (Theorem~\ref{Thm5}).

\subsection{Vector field for deterministic finite-difference mirror descent}

We first define the deterministic finite-difference vector field used in the
mirror update.  Let $\mathbf{e}_i$ be the $i$th standard basis vector, and fix
$\epsilon>0$.  For $i=1,\ldots,d$, define
\(
m_i(\mathbf{x})
:=\frac{f(\mathbf{x}+\epsilon\mathbf{e}_i)
-f(\mathbf{x}-\epsilon\mathbf{e}_i)}{2\epsilon}\) and 
\(
r_i(\mathbf{x})
:=\frac{f(\mathbf{x}+\epsilon\mathbf{e}_i)
+f(\mathbf{x}-\epsilon\mathbf{e}_i)-2f(\mathbf{x})}{2\epsilon}.
\)
Let
\(
M(\mathbf{x}):=\|\mathbf{m}(\mathbf{x})\|_2,
R(\mathbf{x}):=\|\mathbf{r}(\mathbf{x})\|_2.
\)
For fixed $c\in(0,1]$, define the exceptional set
\begin{align}
V_{\epsilon,c}
&:=\bigl\{\mathbf{x}:M(\mathbf{x})=0\ \text{or}\
R(\mathbf{x})\ge cM(\mathbf{x})\bigr\}
=\bigl\{\mathbf{x}:M(\mathbf{x})\le R(\mathbf{x})/c\bigr\}.
\Label{DNB}
\end{align}
For $\mathbf{x}\notin V_{\epsilon,c}$, define
$\Omega_{\epsilon,c}^i(\mathbf{x}):=\alpha(\mathbf{x})m_i(\mathbf{x})$, where
\(
\alpha(\mathbf{x})
:=\frac{c\bigl(M(\mathbf{x})^2-R(\mathbf{x})^2\bigr)}
{M(\mathbf{x})\bigl(cM(\mathbf{x})-R(\mathbf{x})\bigr)}.
\)
The denominator is positive on the domain
$\mathbb{R}^d\setminus V_{\epsilon,c}$.

The procedure uses the following stopping rule.  If
$\mathbf{x}_j\in V_{\epsilon,c}$, it terminates at $\mathbf{x}_j$.  Otherwise,
$\Omega_{\epsilon,c}(\mathbf{x}_j)$ is well defined and the next mirror update
is performed.  Theorems~\ref{Thm1} and \ref{Thm2} can therefore be applied to
the performed updates once their respective assumptions have been verified.

The exceptional region contains the minimizer. Indeed, for each $i=1,\ldots,d$, the minimality of $\mathbf{x}_*$ and the convexity of the univariate function $t\mapsto f(\mathbf{x}_*+t\mathbf{e}_i)$ imply 
\( \frac{f(\mathbf{x}_*)-f(\mathbf{x}_*-\epsilon\mathbf{e}_i)} {\epsilon} \le 0 \le \frac{f(\mathbf{x}_*+\epsilon\mathbf{e}_i)-f(\mathbf{x}_*)} {\epsilon}. \) 
Since these two quantities equal $m_i(\mathbf{x}_*)-r_i(\mathbf{x}_*)$ and $m_i(\mathbf{x}_*)+r_i(\mathbf{x}_*)$, respectively, we have \( |m_i(\mathbf{x}_*)|\le r_i(\mathbf{x}_*). \) 
Consequently, \( M(\mathbf{x}_*)\le R(\mathbf{x}_*)\le \frac{R(\mathbf{x}_*)}{c}, \) where $c\in(0,1]$. Hence, $\mathbf{x}_*\in V_{\epsilon,c}$ by \eqref{DNB}.

\subsection{Generalized star-convexity outside the exceptional region}

We now show that the vector field defined above satisfies the generalized
star-convexity inequality required by Theorem~\ref{Thm2} outside
$V_{\epsilon,c}$.

Let $c\in(0,1]$ be a fixed constant. For a given (unknown)
$\boldsymbol{\xi}\neq0$, define the circular cone by the homogeneous inequality

\[
\mathcal{C}(\boldsymbol{\xi},c)
:=\left\{\mathbf{y}\in\mathbb{R}^d:
\langle\boldsymbol{\xi},\mathbf{y}\rangle
\ge c\|\boldsymbol{\xi}\|_2\|\mathbf{y}\|_2\right\}.
\]
For $\mathbf{y}\ne0$, this is equivalently the set of directions whose cosine
with $\boldsymbol{\xi}$ is at least $c$; the zero vector is included by the
homogeneous inequality.

The following theorem is proved in the appendix.

\begin{theorem}[Sufficient box scaling obtained from the containing ball]\Label{thm:alpha}
Given $\mathbf{l},\mathbf{h}\in\mathbb{R}^d$, define
$\mathbf{m}:=(\mathbf{l}+\mathbf{h})/2$ and
$\mathbf{r}:=(\mathbf{h}-\mathbf{l})/2$. Let $c\in(0,1]$ and suppose
$\|\mathbf{r}\|_2<c\|\mathbf{m}\|_2$.
Set
$\alpha:=\frac{c\bigl(\|\mathbf{m}\|_2^2-\|\mathbf{r}\|_2^2\bigr)}
{\|\mathbf{m}\|_2\bigl(c\|\mathbf{m}\|_2-\|\mathbf{r}\|_2\bigr)}$.
Then every $\boldsymbol{\xi}\in[\mathbf{l},\mathbf{h}]$ satisfies
\begin{align}
\langle\alpha\mathbf{m},\mathbf{y}\rangle
\ge\langle\boldsymbol{\xi},\mathbf{y}\rangle
\quad\text{for all }\mathbf{y}\in\mathcal{C}(\boldsymbol{\xi},c).
\Label{eq:alpha2}
\end{align}
For $d\ge2$, the displayed value is the exact minimum for the containing ball
$\mathbb{B}(\mathbf{m},\|\mathbf{r}\|_2)$.  For $d=1$, it remains a valid
sufficient scaling.  The finite-difference construction uses this single
sufficient formula in every dimension; only sufficiency is needed below.
\end{theorem}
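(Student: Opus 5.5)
The plan is to pass from the box to the containing Euclidean ball and then solve a two-dimensional extremal problem. First, every $\boldsymbol{\xi}\in[\mathbf{l},\mathbf{h}]$ satisfies $|\xi_i-m_i|\le r_i$ componentwise. Hence $\|\boldsymbol{\xi}-\mathbf{m}\|_2\le\|\mathbf{r}\|_2=:\rho$, so $[\mathbf{l},\mathbf{h}]\subset\mathbb{B}(\mathbf{m},\rho)$. It therefore suffices to prove \eqref{eq:alpha2} for all $\boldsymbol{\xi}$ in this ball, and only sufficiency is needed for the box. Write $M:=\|\mathbf{m}\|_2$. By homogeneity in $\mathbf{y}$ we may restrict to unit vectors $\mathbf{y}$ with $\langle\boldsymbol{\xi},\mathbf{y}\rangle\ge c\|\boldsymbol{\xi}\|_2$; the case $\mathbf{y}=0$ is trivial. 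Also $\boldsymbol{\xi}\neq0$, since $\rho<cM\le M$.

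The geometry is governed by two angles. Let $\varphi$ be the half-angle of the tangent cone from the origin to the ball, so $\sin\varphi=\rho/M$. Let $\theta:=\arccos c$ be the half-angle of $\mathcal{C}(\boldsymbol{\xi},c)$. The hypothesis $\rho<cM$ reads $\sin\varphi<\cos\theta$, i.e.\ $\varphi+\theta<\pi/2$. Since the angle between $\mathbf{m}$ and $\boldsymbol{\xi}$ is at most $\varphi$ and the angle between $\boldsymbol{\xi}$ and $\mathbf{y}$ is at most $\theta$, we get $\langle\mathbf{m},\mathbf{y}\rangle\ge M\cos(\varphi+\theta)>0$. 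The claim \eqref{eq:alpha2} is therefore equivalent to
\[
\alpha\ \ge\ \alpha^\star:=\sup\Bigl\{\tfrac{\langle\boldsymbol{\xi},\mathbf{y}\rangle}{\langle\mathbf{m},\mathbf{y}\rangle}:\ \|\boldsymbol{\xi}-\mathbf{m}\|_2\le\rho,\ \|\mathbf{y}\|_2=1,\ \langle\boldsymbol{\xi},\mathbf{y}\rangle\ge c\|\boldsymbol{\xi}\|_2\Bigr\}.
\]
The goal is to show that $\alpha^\star$ equals the displayed $\alpha$ when $d\ge2$, and is at most $\alpha$ when $d=1$.

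To compute $\alpha^\star$, fix $\mathbf{y}$ and decompose $\boldsymbol{\xi}$ and $\mathbf{m}$ into components along $\mathbf{y}$ and orthogonal to it. Rotating the orthogonal part of $\boldsymbol{\xi}-\mathbf{m}$ into the plane $\operatorname{span}\{\mathbf{m},\mathbf{y}\}$ does not decrease the objective and does not tighten the constraints. So the supremum can be computed in a two-dimensional plane, which is where $d\ge2$ enters.

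In that plane, parametrize by the angle $\psi$ between $\mathbf{y}$ and $\mathbf{m}$ and by $\boldsymbol{\xi}=\mathbf{m}+\rho\mathbf{u}$ with $\|\mathbf{u}\|_2\le1$. Then maximize $\langle\boldsymbol{\xi},\mathbf{y}\rangle/(M\cos\psi)$ subject to the cone constraint, using a Lagrange / KKT analysis. I expect the cone constraint to be active at the optimum, i.e.\ the angle between $\boldsymbol{\xi}$ and $\mathbf{y}$ equals $\theta$, and $\boldsymbol{\xi}$ to lie on the boundary sphere of the ball. This leaves a one-parameter maximization over the position of $\boldsymbol{\xi}$ on the circle. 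Its stationarity condition should produce the value $c(M^2-\rho^2)/\bigl(M(cM-\rho)\bigr)$. An extremal pair $(\boldsymbol{\xi},\mathbf{y})$ attaining it will then be exhibited explicitly, which gives exactness on the ball for $d\ge2$.

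The main obstacle is this last optimization step: identifying the correct worst-case configuration and verifying that the resulting value simplifies to exactly the stated $\alpha$. The naive candidates do not give the formula. Taking $\boldsymbol{\xi}$ tangent to the ball, or bounding $\langle\mathbf{m},\mathbf{y}\rangle\ge\langle\boldsymbol{\xi},\mathbf{y}\rangle-\rho$ together with $\|\boldsymbol{\xi}\|_2\ge M-\rho$, yields different expressions, because those extremes are not simultaneously attainable. So the stationarity equation must be solved carefully rather than guessed.

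An alternative I would also try is the dual-cone formulation. The condition is $\alpha\mathbf{m}-\boldsymbol{\xi}\in\mathcal{C}(\boldsymbol{\xi},c)^*$, the circular cone around $\boldsymbol{\xi}$ with cosine $\sqrt{1-c^2}$. One would then minimize $\alpha$ by a ball-versus-cone separation argument.

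Finally, for $d=1$ the cone $\mathcal{C}(\xi,c)$ is the ray $\{y:\xi y\ge0\}$, and the ball is the interval $[m-\rho,m+\rho]$, which lies on one side of $0$. The condition reduces to $\alpha|m|\ge|\xi|$, i.e.\ $\alpha\ge(M+\rho)/M$. The stated $\alpha$ satisfies this, since $c(M^2-\rho^2)\ge(M+\rho)(cM-\rho)$ is equivalent to $\rho(M+\rho)(1-c)\ge0$. So the stated formula is still a valid sufficient scaling in this case, though generally not the minimum.
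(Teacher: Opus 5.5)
Your preliminary reductions are correct and match the paper's. The box lies in $\mathbb{B}(\mathbf{m},\|\mathbf{r}\|_2)$. The bound $\langle\mathbf{m},\mathbf{y}\rangle\ge M\cos(\varphi+\theta)>0$ turns the dominance inequality into $\alpha\ge\alpha^\star$. Your $d=1$ argument ($\alpha|m|\ge|\xi|$, hence $\alpha\ge 1+\rho/M$, together with $c(M^2-\rho^2)-(M+\rho)(cM-\rho)=\rho(1-c)(M+\rho)$) is exactly the paper's one-dimensional lemma.

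The gap is that for $d\ge2$ you never bound $\alpha^\star$. You only predict which constraints are active and say the stationarity equation ``should'' return the formula, and you flag this yourself as the main obstacle. That bound is the entire content of the theorem. Without $\alpha^\star\le c(M^2-\rho^2)/\bigl(M(cM-\rho)\bigr)$, not even sufficiency is established for $d\ge2$. The exactness claim also needs the extremal pair you promise but do not exhibit. Your fixed-$\mathbf{y}$ order also leaves a case open. For a given $\mathbf{y}$ the cone constraint need not bind: for $\mathbf{y}=\mathbf{m}/M$ the ball maximizer $\mathbf{m}+\rho\mathbf{y}$ is parallel to $\mathbf{y}$ and gives ratio $1+\rho/M$. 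You would have to show this regime is dominated.

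Here is how the missing step goes in the paper. Your guess about the extremal configuration is correct. Normalize $M=1$ (so $\rho$ becomes $\rho/M$ and $\sin\varphi=\rho$), and set $s:=\sqrt{1-c^2}$.

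Fix $\boldsymbol{\xi}=z\mathbf{u}$ rather than $\mathbf{y}$, with $\|\mathbf{u}\|_2=1$ and $p:=\langle\mathbf{m},\mathbf{u}\rangle$. The dual cone $\mathcal{C}(\mathbf{u},c)^*=\{\mathbf{w}:\langle\mathbf{u},\mathbf{w}\rangle\ge s\|\mathbf{w}\|_2\}$, which is your stated alternative, turns pointwise feasibility into the closed form $\alpha\ge zc/\bigl(cp-s\sqrt{1-p^2}\bigr)$ with no case split. This threshold is increasing in $z$, so take the far intersection $z_+(p)=p+\sqrt{p^2-(1-\rho^2)}$ of the ray with the sphere.

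With $p=\cos\omega$, the robust value becomes $\max_{0\le\omega\le\varphi}F(\omega)$, where $F(\omega)=c\bigl(\cos\omega+\sqrt{\rho^2-\sin^2\omega}\bigr)/\cos(\omega+\theta)$. This is the same one-parameter function your active-constraint configuration produces.

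To maximize it, use $\frac{d}{d\omega}\log F=\tan(\omega+\theta)-\tan\beta(\omega)$, where $\sin\beta(\omega)=\sin\omega/\rho$. Since $\beta'(\omega)=\cos\omega/\sqrt{\rho^2-\sin^2\omega}>1$, the difference $\beta(\omega)-\omega$ increases from $0$ to $\pi/2-\varphi>\theta$. Hence there is a unique critical point $\beta(\omega_*)=\omega_*+\theta$, i.e.\ $\tan\omega_*=\rho s/(1-\rho c)$. Substituting gives $F(\omega_*)=c(1-\rho^2)/(c-\rho)$. For $c=1$, $F$ is decreasing and the maximum $1+\rho$ is at $\omega=0$.

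Attainment uses $d\ge2$: take $\boldsymbol{\xi}=z_+(\cos\omega_*)\mathbf{u}_*$ with $\mathbf{u}_*=\cos\omega_*\,\mathbf{m}+\sin\omega_*\,\mathbf{v}$ for some unit $\mathbf{v}\perp\mathbf{m}$.
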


The dominance requirement $\langle \mathbf{x}',\mathbf{y}\rangle\ge \langle \mathbf{x},\mathbf{y}\rangle$ for all $\mathbf{y}\in\mathcal C(\mathbf{x},c)$
is equivalent to the dual-cone constraint $\mathbf{x}'-\mathbf{x}\in \mathcal C(\mathbf{x},c)^{*}$ (a rotated second-order cone duality fact;
see, e.g., \cite{LoboEtAl1998,AlizadehGoldfarb2003,WilliamsonLec26}). 
Thus Theorem~\ref{thm:alpha} gives a sufficient solution of a \emph{robust conic dominance} design problem:
choose a single $\mathbf{x}'=\alpha \mathbf{m}$ that satisfies this dual-cone dominance uniformly over the admissible $\mathbf{x}$,
and our proof provides an explicit sufficient scaling $\alpha$.

In order that Theorem \ref{thm:alpha} guarantees that 
the above vector field satisfies 
generalized star-convexity outside the exceptional region
$V_{\epsilon,c}$,
we need to find a lower bound of 
$\frac{\langle \nabla f(\mathbf{x}),\,\mathbf{x}-\mathbf{x}_*\rangle}{\|\mathbf{x}-\mathbf{x}_*\|_2\,\|\nabla f(\mathbf{x})\|_2}$.
For this aim, we employ several concepts, strong convexity, smoothness,
and uniform Hessian bounds.

Now, we employ the framework given in Section \ref{S5:toolbox} when the norm is given as
the Euclid norm $\|~\|_2$.
In this case, for a twice differentiable function $f\in C^2$, 
$\mu$-strongly convexity is equivalent to
$\mu I \preceq \nabla^2 f(\mathbf{x}) $
for any vector $\mathbf{x}\in\mathbb{R}^d$,
see, e.g., \cite[Sec.~3.2]{EE227C_lecture03}.
Also, $L$-smoothness is equivalent to the condition 
$\nabla^2 f(\mathbf{x}) \preceq L I$
for any vector $x\in\mathbb{R}^d$ \cite[Proposition 3.1]{FawziLecture3}.
In summary, a convex function $f\in C^2$ 
is $\mu$-strongly convex and $L$-smooth
if and only if the convex function $f$ has the uniform Hessian bounds, i.e., 
\begin{equation}\Label{eq:hess_bounds}
\mu I \preceq \nabla^2 f(\mathbf{x}) \preceq L I
\qquad \text{for all }\mathbf{x}\in\mathbb{R}^d.
\end{equation}

As shown in Section \ref{S-Thm5}, we have the following theorem.
\begin{theorem}\Label{Thm5}
Assume that 
a convex function $f\in C^2$ satisfies the condition 
\eqref{eq:hess_bounds}.
Then, for every $\mathbf{x}\ne\mathbf{x}_*$, we have
\begin{align}
\frac{\langle \nabla f(\mathbf{x}),\,\mathbf{x}-\mathbf{x}_*\rangle}{\|\mathbf{x}-\mathbf{x}_*\|_2\,\|\nabla f(\mathbf{x})\|_2}
\ge \frac{2\sqrt{\mu L}}{\mu+L}
\Label{DJJ}.
\end{align}
\end{theorem}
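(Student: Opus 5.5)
Since $\nabla f(\mathbf{x}_*)=0$, I will write the gradient along the segment as a fundamental-theorem-of-calculus integral: set $\mathbf{v}:=\mathbf{x}-\mathbf{x}_*\neq 0$ and
\[
A:=\int_0^1\nabla^2 f(\mathbf{x}_*+s\mathbf{v})\,ds .
\]
Then $\nabla f(\mathbf{x})=\nabla f(\mathbf{x})-\nabla f(\mathbf{x}_*)=A\mathbf{v}$. The matrix $A$ is symmetric, and by \eqref{eq:hess_bounds}, averaging the Loewner bounds over $s$ gives $\mu I\preceq A\preceq LI$. The claim \eqref{DJJ} then reduces to a purely matrix-theoretic statement: for every symmetric $A$ with spectrum in $[\mu,L]$ and every $\mathbf{v}\ne0$,
\[
\frac{\langle A\mathbf{v},\mathbf{v}\rangle}{\|A\mathbf{v}\|_2\|\mathbf{v}\|_2}\ge\frac{2\sqrt{\mu L}}{\mu+L}.
\]
This is the Kantorovich inequality in cosine form.

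To prove this reduced inequality I will use the standard elementary argument. Because the eigenvalues $\lambda$ of $A$ lie in $[\mu,L]$, we have $(\lambda-\mu)(L-\lambda)\ge0$, i.e.\ $\lambda^2+\mu L\le(\mu+L)\lambda$. This yields the operator inequality $A^2+\mu L\,I\preceq(\mu+L)A$, and therefore
\[
\|A\mathbf{v}\|_2^2+\mu L\|\mathbf{v}\|_2^2\le(\mu+L)\langle A\mathbf{v},\mathbf{v}\rangle .
\]
Applying AM--GM to the left-hand side gives $2\sqrt{\mu L}\,\|A\mathbf{v}\|_2\|\mathbf{v}\|_2\le(\mu+L)\langle A\mathbf{v},\mathbf{v}\rangle$, which is exactly the desired bound. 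Note that $A\mathbf{v}\ne0$ because $A\succeq\mu I$ with $\mu>0$, so the denominator in \eqref{DJJ} is nonzero. This positivity also implies $\nabla f(\mathbf{x})\neq 0$ for $\mathbf{x}\neq\mathbf{x}_*$.

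I expect the main obstacle to be justifying the first step cleanly rather than the Kantorovich step. Two points need care. First, the minimizer $\mathbf{x}_*$ must exist and satisfy $\nabla f(\mathbf{x}_*)=0$; this follows from strong convexity on $\mathbb{R}^d$. Second, the averaged Hessian $A$ must inherit the Loewner bounds, which follows from continuity of $\nabla^2 f$ for $f\in C^2$ and linearity of the integral. A tempting shortcut would be to bound the angle using only the monotonicity inequalities $\langle\nabla f(\mathbf{x}),\mathbf{v}\rangle\ge\mu\|\mathbf{v}\|^2$ and $\|\nabla f(\mathbf{x})\|\le L\|\mathbf{v}\|$. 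That gives only $\mu/L$, which is weaker than $2\sqrt{\mu L}/(\mu+L)$. The sharper constant requires the fact that the same symmetric operator $A$ produces both the numerator and the norm of $\nabla f(\mathbf{x})$, which is why I route the argument through the integral representation.
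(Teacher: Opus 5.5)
Your proposal is correct and follows the paper's proof: the paper also writes $\nabla f(\mathbf{x})=\bar H(\mathbf{x}-\mathbf{x}_*)$ with the averaged Hessian $\bar H$, notes that $\mu I\preceq\bar H\preceq LI$, and concludes with the Kantorovich inequality in cosine form. The only difference is in the Kantorovich step: the paper cites $\langle\mathbf{u},A\mathbf{u}\rangle\langle\mathbf{u},A^{-1}\mathbf{u}\rangle\le\frac{(\mu+L)^2}{4\mu L}\|\mathbf{u}\|_2^4$ and substitutes $\mathbf{u}=A^{1/2}\mathbf{v}$, whereas you prove that step directly from $A^2+\mu L\,I\preceq(\mu+L)A$ and AM--GM.
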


We also record the elementary uncertainty-box inclusion needed below.  Convexity
of each univariate restriction $t\mapsto f(\mathbf{x}+t\mathbf{e}_i)$ gives
\begin{equation}
\frac{f(\mathbf{x})-f(\mathbf{x}-\epsilon\mathbf{e}_i)}{\epsilon}
\le \partial_i f(\mathbf{x})\le
\frac{f(\mathbf{x}+\epsilon\mathbf{e}_i)-f(\mathbf{x})}{\epsilon}.
\end{equation}
The two endpoints are respectively $m_i(\mathbf{x})-r_i(\mathbf{x})$ and
$m_i(\mathbf{x})+r_i(\mathbf{x})$.  Hence, we have
\begin{equation}
\nabla f(\mathbf{x})\in
[\mathbf{m}(\mathbf{x})-\mathbf{r}(\mathbf{x}),
 \mathbf{m}(\mathbf{x})+\mathbf{r}(\mathbf{x})].
\Label{eq:gradient-box}
\end{equation}

The combination of Theorem \ref{thm:alpha}, Theorem \ref{Thm5}, and
\eqref{eq:gradient-box} yields the following corollary.

\begin{corollary}\Label{LL9}
Assume that a convex function $f\in C^2$ satisfies the condition 
\eqref{eq:hess_bounds}, and we set
\begin{align}
c:=\frac{2\sqrt{\mu L}}{\mu+L}.\Label{NHJK}
\end{align}
When $\mathbf{x}$ does not belong to $V_{\epsilon,c}$, 
we have $D_{f,\Omega_{\epsilon,c}}(\mathbf{x}_*\|\mathbf{x}) \ge 0$, i.e., 
\begin{align}
\langle \Omega_{\epsilon,c} (\mathbf{x}), \mathbf{x} - \mathbf{x}_* \rangle
\ge f(\mathbf{x}) - f(\mathbf{x}_*) .
\end{align}
\end{corollary}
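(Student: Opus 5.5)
We chain three facts: the gradient box inclusion \eqref{eq:gradient-box}, the angle bound of Theorem~\ref{Thm5}, and the dominance statement of Theorem~\ref{thm:alpha}. Convexity of $f$ then converts the resulting inner-product inequality into the function-value gap.

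Fix $\mathbf{x}\notin V_{\epsilon,c}$ with $c$ as in \eqref{NHJK}, and set $\mathbf{l}:=\mathbf{m}(\mathbf{x})-\mathbf{r}(\mathbf{x})$ and $\mathbf{h}:=\mathbf{m}(\mathbf{x})+\mathbf{r}(\mathbf{x})$. Then the midpoint and half-width in Theorem~\ref{thm:alpha} are exactly $\mathbf{m}(\mathbf{x})$ and $\mathbf{r}(\mathbf{x})$. Since $\mathbf{x}\notin V_{\epsilon,c}$, we have $M(\mathbf{x})>0$ and $R(\mathbf{x})<cM(\mathbf{x})$, which is the hypothesis $\|\mathbf{r}\|_2<c\|\mathbf{m}\|_2$. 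Also $c\in(0,1]$ by the AM--GM inequality. The scaling $\alpha$ of Theorem~\ref{thm:alpha} coincides with $\alpha(\mathbf{x})$, so $\alpha\mathbf{m}=\Omega_{\epsilon,c}(\mathbf{x})$. By \eqref{eq:gradient-box}, $\boldsymbol{\xi}:=\nabla f(\mathbf{x})$ lies in $[\mathbf{l},\mathbf{h}]$.

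Next, put $\mathbf{y}:=\mathbf{x}-\mathbf{x}_*$. I need $\mathbf{y}\in\mathcal C(\nabla f(\mathbf{x}),c)$, and I handle the degenerate cases separately. If $\mathbf{x}=\mathbf{x}_*$, then $\mathbf{y}=0$ belongs to every cone, which in any case cannot occur since $\mathbf{x}_*\in V_{\epsilon,c}$. If $\mathbf{x}\neq\mathbf{x}_*$, strong convexity gives $\nabla f(\mathbf{x})\neq0$, so the cone is defined. Theorem~\ref{Thm5} gives $\langle\nabla f(\mathbf{x}),\mathbf{y}\rangle\ge c\|\nabla f(\mathbf{x})\|_2\|\mathbf{y}\|_2$, which is exactly cone membership.

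Theorem~\ref{thm:alpha} then yields
\[
\langle\Omega_{\epsilon,c}(\mathbf{x}),\mathbf{x}-\mathbf{x}_*\rangle\ge\langle\nabla f(\mathbf{x}),\mathbf{x}-\mathbf{x}_*\rangle.
\]
The gradient inequality for convex $f$ gives $f(\mathbf{x}_*)\ge f(\mathbf{x})+\langle\nabla f(\mathbf{x}),\mathbf{x}_*-\mathbf{x}\rangle$, which rearranges to $\langle\nabla f(\mathbf{x}),\mathbf{x}-\mathbf{x}_*\rangle\ge f(\mathbf{x})-f(\mathbf{x}_*)$. Combining the two inequalities gives the claim, and this is equivalent to $D_{f,\Omega_{\epsilon,c}}(\mathbf{x}_*\|\mathbf{x})\ge0$ by the definition of $D_{f,\Omega}$.

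The proof contains no deep obstacle; it is bookkeeping. The point needing most care is the second step. Theorem~\ref{thm:alpha} quantifies over cones attached to each $\boldsymbol{\xi}$ in the box, so I must check that the cone is the one attached to the true gradient $\boldsymbol{\xi}=\nabla f(\mathbf{x})$, not to $\mathbf{m}$. Theorem~\ref{Thm5} supplies exactly that cone membership. I must also check that the nondegeneracy conditions ($\nabla f(\mathbf{x})\ne0$ and $M(\mathbf{x})>0$) hold off $V_{\epsilon,c}$.
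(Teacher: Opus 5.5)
Your proposal is correct and follows essentially the same route as the paper. Convexity reduces the claim to $\langle\Omega_{\epsilon,c}(\mathbf{x}),\mathbf{x}-\mathbf{x}_*\rangle\ge\langle\nabla f(\mathbf{x}),\mathbf{x}-\mathbf{x}_*\rangle$, Theorem~\ref{Thm5} places $\mathbf{x}-\mathbf{x}_*$ in $\mathcal{C}(\nabla f(\mathbf{x}),c)$, and Theorem~\ref{thm:alpha} is applied to the box $[\mathbf{m}(\mathbf{x})-\mathbf{r}(\mathbf{x}),\mathbf{m}(\mathbf{x})+\mathbf{r}(\mathbf{x})]\ni\nabla f(\mathbf{x})$. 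Your explicit checks of $M(\mathbf{x})>0$, $\|\mathbf{r}(\mathbf{x})\|_2<c\|\mathbf{m}(\mathbf{x})\|_2$, $c\in(0,1]$, and $\nabla f(\mathbf{x})\neq0$ spell out hypotheses that the paper uses only implicitly.
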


\begin{proof}
Since $f$ is convex, we have
$\langle \nabla f (\mathbf{x}), \mathbf{x} - \mathbf{x}_* \rangle
\ge f(\mathbf{x}) - f(\mathbf{x}_*) $.
It is sufficient to show
\begin{align}
\langle \Omega_{\epsilon,c} (\mathbf{x}), \mathbf{x} - \mathbf{x}_* \rangle
\ge
\langle \nabla f (\mathbf{x}), \mathbf{x} - \mathbf{x}_* \rangle.
\Label{BN2}
\end{align}

Theorem \ref{Thm5} guarantees the relation \eqref{DJJ}, i.e.,
\begin{align}
\mathbf{x}-\mathbf{x}_* \in \mathcal{C}( \nabla f(\mathbf{x}),c).
\Label{BN1N}
\end{align}
Then, we apply Theorem \ref{thm:alpha} to the case when 
$\mathbf{h}= \mathbf{m}(\mathbf{x})+\mathbf{r}(\mathbf{x})$, 
$\mathbf{l} = \mathbf{m}(\mathbf{x})-\mathbf{r}(\mathbf{x})$, 
$\mathbf{m}=\mathbf{m}(\mathbf{x})$,
and $\mathbf{x}= \nabla f (\mathbf{x})$.
Hence, the combination of \eqref{eq:alpha2} and \eqref{BN1N} implies \eqref{BN2}.
\qed\end{proof}

We have thus verified the generalized star-convexity condition required by
Theorem~\ref{Thm2} outside the exceptional region $V_{\epsilon,c}$
(equivalently, outside the set defined by \eqref{DNB}).
Combining this interface property with the trajectory-wise generalized relative-smoothness condition \eqref{RS2}
(for the same vector field $\Omega_{\epsilon,c}$), we can now invoke
Theorem~\ref{Thm2} to obtain an explicit last-iterate function-value bound for the
deterministic finite-difference mirror descent scheme.

\subsection{Certified convergence guarantee for deterministic finite differences}
We now state the resulting certified convergence bound obtained by combining
Theorem~\ref{Thm2} with Corollary~\ref{LL9} for the finite-difference vector field
$\Omega_{\epsilon,c}$.

\begin{theorem}\Label{Thm3}
Assume that a convex function $f\in C^2(\mathbb{R}^d)$ satisfies
\eqref{eq:hess_bounds}, and set $c$ as in \eqref{NHJK}.  Run the
finite-difference procedure with the stopping convention stated after
\eqref{DNB}, and suppose that every performed step satisfies \eqref{RS2}.
Then either the procedure first stops at an index $\tau\le t$, in which case
\begin{equation}
f(\mathbf{x}_\tau)-f(\mathbf{x}_*)
\le \sup_{\mathbf{x}\in V_{\epsilon,c}}
\bigl(f(\mathbf{x})-f(\mathbf{x}_*)\bigr),
\end{equation}
or no stopping occurs through $\mathbf{x}_t$, in which case
\begin{align}
f(\mathbf{x}_t)-f(\mathbf{x}_*)
\le
\frac{D_\Phi(\mathbf{x}_*\|\mathbf{x}_1)}
{\sum_{j=1}^{t-1}\eta_j}.
\Label{DJ3}
\end{align}
Equivalently, for the returned point $\widehat{\mathbf{x}}_t$, equal to the first
stopping point if one occurs and to $\mathbf{x}_t$ otherwise, we have
\begin{align}
f(\widehat{\mathbf{x}}_t)-f(\mathbf{x}_*)
\le \max\left\{
\frac{D_\Phi(\mathbf{x}_*\|\mathbf{x}_1)}
{\sum_{j=1}^{t-1}\eta_j},
\sup_{\mathbf{x}\in V_{\epsilon,c}}
\bigl(f(\mathbf{x})-f(\mathbf{x}_*)\bigr)\right\},
\end{align}
where the first term is omitted if stopping occurs before it is needed.
\end{theorem}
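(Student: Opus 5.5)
The argument splits into two cases according to whether the stopping rule fires, and combines Theorem~\ref{Thm2} with Corollary~\ref{LL9}. Throughout, take $\mathcal K=\mathbb{R}^d$ (the full-domain setting) and the exceptional region $U:=V_{\epsilon,c}$. The discussion after \eqref{DNB} already shows $\mathbf{x}_*\in V_{\epsilon,c}$, so $U$ is an admissible exceptional region containing the minimizer. Corollary~\ref{LL9}, with $c$ chosen as in \eqref{NHJK}, states precisely that $f$ is generalized star-convex on $\mathbb{R}^d\setminus V_{\epsilon,c}$ with respect to $\Omega_{\epsilon,c}$. Outside $V_{\epsilon,c}$ the field $\Omega_{\epsilon,c}$ is defined, which is all Theorem~\ref{Thm2} uses, since its star-convexity hypothesis only concerns points outside $U$.

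\emph{Stopping case.} Suppose the procedure first stops at $\tau\le t$. By the stopping rule, $\mathbf{x}_\tau\in V_{\epsilon,c}$. Then $f(\mathbf{x}_\tau)-f(\mathbf{x}_*)\le\sup_{\mathbf{x}\in V_{\epsilon,c}}(f(\mathbf{x})-f(\mathbf{x}_*))$ holds trivially, with no need for the rate term.

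\emph{Non-stopping case.} Suppose no stop occurs through $\mathbf{x}_t$. Then $\mathbf{x}_1,\dots,\mathbf{x}_t\notin V_{\epsilon,c}$. (If stopping is checked only before an update, the key point is still that $\mathbf{x}_t$ itself was not declared a stop.) All $t-1$ updates were performed, and by assumption each satisfies \eqref{RS2}. Applying Theorem~\ref{Thm2} gives the max bound \eqref{DJ1}.

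To obtain the sharper \eqref{DJ3} without the floor term, I would reopen the telescoping argument behind Theorem~\ref{Thm2}, presumably Lemma~\ref{Lem1}. That argument uses the star-convexity inequality only at the iterates $\mathbf{x}_j$ together with the monotonicity-type consequence of \eqref{RS2}. The max arises only when some iterate lies in $U$. Since here every iterate lies outside $V_{\epsilon,c}$, star-convexity holds at every $\mathbf{x}_j$, and the three-point identity summed over $j$ yields
\[
\sum_{j=1}^{t-1}\eta_j\bigl(f(\mathbf{x}_{j+1})-f(\mathbf{x}_*)\bigr)\le D_\Phi(\mathbf{x}_*\|\mathbf{x}_1).
\]
The per-step inequality from \eqref{RS2} also gives $f(\mathbf{x}_{j+1})\le f(\mathbf{x}_j)$, since the Bregman term from the three-point identity is nonnegative. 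Hence the last iterate is the smallest term in the sum, and dividing by $\sum_{j}\eta_j$ gives \eqref{DJ3}.

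The main obstacle is exactly this step: checking that the proof of Theorem~\ref{Thm2} indeed yields the pure rate term when no iterate enters $U$. If that proof is organized differently, the fallback is to cite \eqref{DJ1} directly; this still gives the final combined "equivalently" bound, which is all that is needed for the returned point $\widehat{\mathbf{x}}_t$. The combined statement then follows by taking the maximum over the two cases.
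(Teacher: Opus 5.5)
Your proposal is correct and follows essentially the paper's route: take $U=V_{\epsilon,c}$, use Corollary~\ref{LL9} for the star-convexity interface, and handle the stopping case trivially since $\mathbf{x}_\tau\in V_{\epsilon,c}$. In the non-stopping case you rerun the second case of the proof of Theorem~\ref{Thm2}, which does give the pure rate bound \eqref{DJ3} when no iterate lies in $U$, so the obstacle you flag is already resolved there; note only that the telescoped inequality and the monotonicity $f(\mathbf{x}_{j+1})\le f(\mathbf{x}_j)$ come from Lemma~\ref{Lem2} (applied with $\mathbf{x}=\mathbf{x}_*$ and with $\mathbf{x}=\mathbf{y}=\mathbf{x}_j$, respectively), which is derived from Lemma~\ref{Lem1}.
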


The bound \eqref{DJ3} has a transparent ``rate-plus-floor'' structure: outside the exceptional set $V_{\epsilon,c}$, the certified term decreases according to $\bigl(\sum_{j=1}^{t-1}\eta_j\bigr)^{-1}$, while the finite-difference resolution manifests itself through the error-floor term $\sup_{\mathbf{x}\in V_{\epsilon,c}} \bigl(f(\mathbf{x})-f(\mathbf{x}_*)\bigr)$. 
In particular, the certified term is $O(1/t)$ if the accepted stepsizes are uniformly bounded below by a positive constant.

We define the neighborhood $U_{\epsilon}(\mathbf{x}_0):=
\{\mathbf{x}: \|\mathbf{x}-\mathbf{x}_0\|_2 \le \epsilon \}$.
As shown in Section \ref{S-Thm6}, we have the following theorem.

\begin{theorem}\Label{Thm6}
Assume that $f$ is $\mu$-strongly convex and $L$-smooth with respect to the Euclidean norm, where $0<\mu\le L$.
We set $c$ as \eqref{NHJK}.
Then, we have
\begin{align}
V_{\epsilon,c} \subset
U_{\frac{L}{2\mu}\left(1+\frac1c\right)\epsilon\sqrt d}(\mathbf{x}_*)
\Label{DJ9}.
\end{align}
\end{theorem}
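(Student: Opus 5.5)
Take $\mathbf{x}\in V_{\epsilon,c}$, so that $M(\mathbf{x})\le R(\mathbf{x})/c$. The plan is to sandwich $\|\nabla f(\mathbf{x})\|_2$ between a lower bound of order $\mu\|\mathbf{x}-\mathbf{x}_*\|_2$ and an upper bound of order $(1+1/c)\,L\epsilon\sqrt d$ built from $M$ and $R$. Comparing the two bounds gives the radius.

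\emph{Step 1 (lower bound).} By $\mu$-strong convexity, $\langle\nabla f(\mathbf{x})-\nabla f(\mathbf{x}_*),\mathbf{x}-\mathbf{x}_*\rangle\ge\mu\|\mathbf{x}-\mathbf{x}_*\|_2^2$. Since $\nabla f(\mathbf{x}_*)=0$, Cauchy--Schwarz yields $\|\nabla f(\mathbf{x})\|_2\ge\mu\|\mathbf{x}-\mathbf{x}_*\|_2$.

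\emph{Step 2 (bound $R$).} For each coordinate, $L$-smoothness applied to the univariate restriction $g(s)=f(\mathbf{x}+s\mathbf{e}_i)$ gives $g(\pm\epsilon)-g(0)\mp\epsilon g'(0)\le L\epsilon^2/2$. Adding the two inequalities gives $0\le r_i(\mathbf{x})\le L\epsilon/2$, where nonnegativity comes from convexity. Hence $R(\mathbf{x})\le L\epsilon\sqrt d/2$.

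\emph{Step 3 (upper bound).} By the box inclusion \eqref{eq:gradient-box}, $|\partial_i f(\mathbf{x})-m_i(\mathbf{x})|\le r_i(\mathbf{x})$, so $\|\nabla f(\mathbf{x})\|_2\le M(\mathbf{x})+R(\mathbf{x})$. On $V_{\epsilon,c}$ this is at most $(1+1/c)R(\mathbf{x})\le (1+1/c)L\epsilon\sqrt d/2$. The case $M=0$ is already covered by $M\le R/c$.

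\emph{Step 4 (combine).} Steps 1 and 3 give $\|\mathbf{x}-\mathbf{x}_*\|_2\le \frac{L}{2\mu}(1+\frac1c)\epsilon\sqrt d$, which is \eqref{DJ9}.

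Each step is routine, but the point to check carefully is Step 2. There I must confirm that the one-dimensional descent inequality follows from Euclidean $L$-smoothness restricted to the line (it does, since $\|\mathbf{e}_i\|_2=1$), and that the exact constant $L/2$ yields the stated factor $\frac{L}{2\mu}$. No $C^2$ assumption is needed, since only first-order smoothness and strong convexity are used.
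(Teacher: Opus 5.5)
Your proof is correct and follows the paper's argument. You argue directly on $V_{\epsilon,c}$ rather than by contraposition, which is equivalent, and you use the same strong-convexity bound $\|\nabla f(\mathbf{x})\|_2\ge\mu\|\mathbf{x}-\mathbf{x}_*\|_2$ and the same second-difference bound $R(\mathbf{x})\le L\epsilon\sqrt d/2$.

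The only real difference is how you relate $M$ to $\|\nabla f\|_2$. The paper bounds $|m_i(\mathbf{x})-\partial_i f(\mathbf{x})|\le L\epsilon/2$ via the fundamental theorem of calculus in \eqref{eq:Thm6-coordinate-error}. You instead reuse the gradient box \eqref{eq:gradient-box} to get $\|\nabla f(\mathbf{x})-\mathbf{m}(\mathbf{x})\|_2\le R(\mathbf{x})$. This gives the same constant, and also the slightly sharper, data-dependent bound $\mu\|\mathbf{x}-\mathbf{x}_*\|_2\le(1+1/c)R(\mathbf{x})$ on $V_{\epsilon,c}$.
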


Theorem~\ref{Thm6} shows that the finite-difference exceptional set $V_{\epsilon,c}$
is contained in an explicit ball around the minimizer, with radius proportional to
$\epsilon\sqrt{d}$ with radius factor $\frac{L}{2\mu}(1+1/c)$.
Consequently, the error-floor term in \eqref{DJ3} can be upper bounded by the objective
variation on this neighborhood, yielding a more interpretable ``resolution-limited''
guarantee stated in the corollary below.
\begin{corollary}\Label{Thm7}
Assume that a convex function $f\in C^2(\mathbb{R}^d)$ satisfies the condition 
\eqref{eq:hess_bounds}.
We set $c$ as \eqref{NHJK}.
Run the finite-difference procedure with the stopping convention stated after \eqref{DNB}, and suppose that every performed step satisfies the trajectory-wise generalized relative-smoothness condition \eqref{RS2} for $\Omega_{\epsilon,c}$. 
Then,
the algorithm based on $\Omega_{\epsilon,c}$ satisfies 
\begin{align}
 f(\widehat{\mathbf{x}}_t)- f(\mathbf{x}_*)
\le 
\max \Big(\frac{D_\Phi(\mathbf{x}_* \| \mathbf{x}_1)}{\sum_{j=1}^{t-1} \eta_j}, 
\max_{\mathbf{x}\in U_{\frac{L}{2\mu}\left(1+\frac1c\right)\epsilon\sqrt d}(\mathbf{x}_*) }
( f(\mathbf{x})- f(\mathbf{x}_*))\Big)
\Label{DJ3B}.
\end{align}
where the first term is omitted if stopping occurs before it is needed.
\end{corollary}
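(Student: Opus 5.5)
The statement is a direct combination of Theorem~\ref{Thm3} and Theorem~\ref{Thm6}, so the plan is to reduce it to the bound of Theorem~\ref{Thm3} and then enlarge the supremum by set inclusion.

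\textbf{Step 1 (hypotheses of Theorem~\ref{Thm3}).} A convex $f\in C^2$ with the Hessian bounds \eqref{eq:hess_bounds} is exactly the setting of Theorem~\ref{Thm3}. With $c$ as in \eqref{NHJK}, the stopping convention after \eqref{DNB}, and \eqref{RS2} holding on every performed step, Theorem~\ref{Thm3} gives
\[
f(\widehat{\mathbf{x}}_t)-f(\mathbf{x}_*)\le \max\Bigl\{\frac{D_\Phi(\mathbf{x}_*\|\mathbf{x}_1)}{\sum_{j=1}^{t-1}\eta_j},\ \sup_{\mathbf{x}\in V_{\epsilon,c}}\bigl(f(\mathbf{x})-f(\mathbf{x}_*)\bigr)\Bigr\},
\]
with the first term omitted when stopping occurs early. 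I would handle the two cases of Theorem~\ref{Thm3} separately, the stopped case and the non-stopped case, so that the omission convention is transparent.

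\textbf{Step 2 (Theorem~\ref{Thm6} applies).} Using the equivalence recalled before \eqref{eq:hess_bounds}, $f$ is $\mu$-strongly convex and $L$-smooth in the Euclidean norm with $0<\mu\le L$. Theorem~\ref{Thm6} then yields
\[
V_{\epsilon,c}\subset U_{\rho}(\mathbf{x}_*),\qquad \rho:=\frac{L}{2\mu}\Bigl(1+\frac1c\Bigr)\epsilon\sqrt d .
\]

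\textbf{Step 3 (monotonicity of the supremum).} Since $f(\mathbf{x})-f(\mathbf{x}_*)\ge0$, a supremum over a subset is at most the supremum over the superset. The ball $U_\rho(\mathbf{x}_*)$ is compact and $f$ is continuous, so the supremum over it is attained and can be written as a $\max$, as in \eqref{DJ3B}. Monotonicity of the outer $\max$ in its second argument then completes the proof.

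\textbf{Possible subtlety.} I expect no real obstacle beyond the stopped case. There the bound $f(\mathbf{x}_\tau)-f(\mathbf{x}_*)\le\sup_{V_{\epsilon,c}}(\cdots)$ needs $\mathbf{x}_\tau\in V_{\epsilon,c}$, which holds by the definition of the stopping rule, and the inclusion from Step~2 transfers this bound directly to the ball.
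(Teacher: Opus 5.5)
Your proposal is correct and follows the paper's own route: Corollary~\ref{Thm7} is obtained by inserting the inclusion $V_{\epsilon,c}\subset U_{\frac{L}{2\mu}\left(1+\frac1c\right)\epsilon\sqrt d}(\mathbf{x}_*)$ from Theorem~\ref{Thm6} into the floor term of Theorem~\ref{Thm3}. Compactness of the closed ball then turns the supremum into a maximum. (The nonnegativity of $f-f(\mathbf{x}_*)$ that you invoke is not needed, since a supremum over a set never exceeds the supremum over a superset.)
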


Combining Corollary~\ref{LL9} (the generalized star-convexity interface outside
the exceptional region for
$\Omega_{\epsilon,c}$) with the trajectory-wise generalized relative-smoothness condition \eqref{RS2},
we obtain the certified last-iterate bound \eqref{DJ3B} for deterministic finite
differences.
Thus, the general $\Omega$-driven mirror-descent theory specializes to a fully
deterministic zeroth-order method with an explicit error floor determined by the
finite-difference resolution.

\section{Proof of Theorems \ref{Thm1} and \ref{Thm2}}\Label{S6}
\subsection{Two key lemmas}
To show Theorems \ref{Thm1} and \ref{Thm2}, we prepare two key lemmas.
To state our key lemmas, we define a two-variable function as
\begin{align}
J_{f,\eta,\Omega}(\mathbf{x},\mathbf{y}) := 
\eta(  f(\mathbf{x})- D_{f,\Omega}(\mathbf{x}\|\mathbf{y}))
+D_\Phi(\mathbf{x}\|\mathbf{y}).
\end{align}

Without any assumption for $f$, 
we have the following two lemmas.

\begin{lemma}\Label{Lem1}
The following relation holds:
\begin{align}
\begin{aligned}
J_{f,\eta,\Omega}(\mathbf{x},\mathbf{y}) 
=& D_\Phi(\mathbf{x}\| F_{\eta \Omega}(\mathbf{y}))
- \Phi^*(\nabla \Phi(\mathbf{y})-\eta \Omega(\mathbf{y}) ) 
 + \Phi^*(\nabla \Phi(\mathbf{y})) \\
&-\eta(\langle \Omega(\mathbf{y}), \mathbf{y} \rangle
-f(\mathbf{y})) .
\Label{BN1}
\end{aligned}
\end{align}
\end{lemma}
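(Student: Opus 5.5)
The identity in Lemma~\ref{Lem1} is purely algebraic, and I will prove it by expanding both sides and comparing terms. No assumption on $f$ is needed. The only structural input is the Legendre property of $\Phi$: for $\boldsymbol{\theta}\in\operatorname{int}\operatorname{dom}\Phi^*$ and $\mathbf{z}=\nabla\Phi^*(\boldsymbol{\theta})$, we have $\nabla\Phi(\mathbf{z})=\boldsymbol{\theta}$ and the Fenchel--Young equality
\[
\Phi(\mathbf{z})+\Phi^*(\boldsymbol{\theta})=\langle\boldsymbol{\theta},\mathbf{z}\rangle .
\]

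\emph{Left-hand side.} By the definition of $D_{f,\Omega}$,
\[
f(\mathbf{x})-D_{f,\Omega}(\mathbf{x}\|\mathbf{y})=f(\mathbf{y})-\langle\Omega(\mathbf{y}),\mathbf{y}-\mathbf{x}\rangle .
\]
The $f(\mathbf{x})$ terms cancel, so
\[
J_{f,\eta,\Omega}(\mathbf{x},\mathbf{y})=\eta f(\mathbf{y})-\eta\langle\Omega(\mathbf{y}),\mathbf{y}\rangle+\eta\langle\Omega(\mathbf{y}),\mathbf{x}\rangle+\Phi(\mathbf{x})-\Phi(\mathbf{y})-\langle\nabla\Phi(\mathbf{y}),\mathbf{x}-\mathbf{y}\rangle .
\]
The terms $\eta f(\mathbf{y})-\eta\langle\Omega(\mathbf{y}),\mathbf{y}\rangle$ already match the last term of \eqref{BN1}. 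It therefore remains to show
\[
\Phi(\mathbf{x})-\Phi(\mathbf{y})+\langle\nabla\Phi(\mathbf{y}),\mathbf{y}\rangle-\langle\boldsymbol{\theta}',\mathbf{x}\rangle
= D_\Phi(\mathbf{x}\|F_{\eta\Omega}(\mathbf{y}))-\Phi^*(\boldsymbol{\theta}')+\Phi^*(\nabla\Phi(\mathbf{y})),
\]
where $\boldsymbol{\theta}':=\nabla\Phi(\mathbf{y})-\eta\Omega(\mathbf{y})$.

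\emph{Right-hand side.} Put $\mathbf{z}:=F_{\eta\Omega}(\mathbf{y})=\nabla\Phi^*(\boldsymbol{\theta}')$, so that $\nabla\Phi(\mathbf{z})=\boldsymbol{\theta}'$. Then
\[
D_\Phi(\mathbf{x}\|\mathbf{z})=\Phi(\mathbf{x})-\Phi(\mathbf{z})-\langle\boldsymbol{\theta}',\mathbf{x}\rangle+\langle\boldsymbol{\theta}',\mathbf{z}\rangle .
\]
Fenchel--Young at $\mathbf{z}$ gives $-\Phi(\mathbf{z})+\langle\boldsymbol{\theta}',\mathbf{z}\rangle=\Phi^*(\boldsymbol{\theta}')$. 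This term cancels the $-\Phi^*(\boldsymbol{\theta}')$ on the right, which leaves
\[
\Phi(\mathbf{x})-\langle\boldsymbol{\theta}',\mathbf{x}\rangle+\Phi^*(\nabla\Phi(\mathbf{y})).
\]
Fenchel--Young at $\mathbf{y}$ gives $\Phi^*(\nabla\Phi(\mathbf{y}))=\langle\nabla\Phi(\mathbf{y}),\mathbf{y}\rangle-\Phi(\mathbf{y})$. Substituting this reproduces exactly the left-hand expression displayed above, which proves the identity.

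The only point requiring care is the domain bookkeeping. The first Fenchel--Young equality needs $\boldsymbol{\theta}'\in\operatorname{int}\operatorname{dom}\Phi^*$, which is the standing admissibility condition on $\eta$. The second needs $\mathbf{y}\in\operatorname{int}\operatorname{dom}\Phi$, which guarantees that $\nabla\Phi(\mathbf{y})$ lies in $\operatorname{int}\operatorname{dom}\Phi^*$. Apart from tracking signs correctly, I expect no real obstacle.
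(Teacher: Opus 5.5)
Your proof is correct and is essentially the paper's computation. The paper writes both Bregman divergences in the conjugate form $\langle\nabla\Phi(\mathbf{x})-\nabla\Phi(\mathbf{w}),\mathbf{x}\rangle-\Phi^*(\nabla\Phi(\mathbf{x}))+\Phi^*(\nabla\Phi(\mathbf{w}))$ and adds and subtracts $\Phi^*(\nabla\Phi(\mathbf{y})-\eta\Omega(\mathbf{y}))$, which is the same Fenchel--Young bookkeeping you carry out in primal form; your primal expansion has the minor advantage of never invoking $\nabla\Phi(\mathbf{x})$, so it only needs $\mathbf{x}\in\operatorname{dom}\Phi$.
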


Lemma~\ref{Lem1} plays the role of the Bregman three-point identity step used in the classical mirror-descent proof
(e.g., \cite{FawziLecture11Bregman2023}, between (10) and (11)), but is written in a form compatible with a general vector field $\Omega$.

\begin{proof}
We have
\begin{align}
\begin{aligned}
&J_{f,\eta,\Omega}(\mathbf{x},\mathbf{y})  \\
=& 
(\langle 
\nabla \Phi(\mathbf{x})-\nabla \Phi(\mathbf{y}), \mathbf{x} \rangle
-\Phi^*(\nabla \Phi(\mathbf{x})) + \Phi^*(\nabla \Phi(\mathbf{y})) )
\\
&-
\eta(\langle \Omega(\mathbf{y}), \mathbf{y} - \mathbf{x} \rangle
-f(\mathbf{y})) \\
=& 
\langle 
\nabla \Phi(\mathbf{x})-\nabla \Phi(\mathbf{y})+\eta \Omega(\mathbf{y}), \mathbf{x} \rangle
\\
&-\Phi^*(\nabla \Phi(\mathbf{x})) + \Phi^*(\nabla \Phi(\mathbf{y})) 
-
\eta(\langle \Omega(\mathbf{y}), \mathbf{y} \rangle
-f(\mathbf{y})) \\
=& 
\langle 
\nabla \Phi(\mathbf{x})-\nabla \Phi(\mathbf{y})+\eta \Omega(\mathbf{y}), \mathbf{x} \rangle
\\
&-\Phi^*(\nabla \Phi(\mathbf{x}))
 + \Phi^*(\nabla \Phi(\mathbf{y})-\eta \Omega(\mathbf{y}) )\\
&- \Phi^*(\nabla \Phi(\mathbf{y})-\eta \Omega(\mathbf{y}) ) 
 + \Phi^*(\nabla \Phi(\mathbf{y})) 
-\eta(\langle \Omega(\mathbf{y}), \mathbf{y} \rangle
-f(\mathbf{y})) \\
=& D_\Phi(\mathbf{x}\| \nabla \Phi^{-1}(\nabla \Phi(\mathbf{y})-
\eta \Omega(\mathbf{y})))
\\
&- \Phi^*(\nabla \Phi(\mathbf{y})-\eta \Omega(\mathbf{y}) ) 
 + \Phi^*(\nabla \Phi(\mathbf{y})) 
-\eta(\langle \Omega(\mathbf{y}), \mathbf{y} \rangle
-f(\mathbf{y})) \\
=& D_\Phi(\mathbf{x}\| F_{\eta \Omega}(\mathbf{y}))
- \Phi^*(\nabla \Phi(\mathbf{y})-\eta \Omega(\mathbf{y}) ) 
 + \Phi^*(\nabla \Phi(\mathbf{y})) 
-\eta(\langle \Omega(\mathbf{y}), \mathbf{y} \rangle
-f(\mathbf{y})) .
\end{aligned}
\end{align}
\qed\end{proof}

\begin{lemma}\Label{Lem2}
\begin{align}
\begin{aligned}
&D_\Phi(\mathbf{x} \| \mathbf{y})-D_\Phi(\mathbf{x} \| F_{\eta \Omega}(\mathbf{y})) \notag\\
=&\eta(f(F_{\eta \Omega}(\mathbf{y}))-f(\mathbf{x}) ) 
+D_\Phi(F_{\eta \Omega}(\mathbf{y})\|\mathbf{y})-\eta D_{f,\Omega}(F_{\eta \Omega}(\mathbf{y})\|\mathbf{y})
+\eta D_{f,\Omega}(\mathbf{x}\|\mathbf{y}).
\end{aligned}
\end{align}
\end{lemma}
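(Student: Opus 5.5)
Write $\mathbf{z}:=F_{\eta\Omega}(\mathbf{y})$, so that $\nabla\Phi(\mathbf{z})=\nabla\Phi(\mathbf{y})-\eta\Omega(\mathbf{y})$. The plan is to prove the identity by direct expansion, using only the definitions of $D_\Phi$ and $D_{f,\Omega}$ and this defining relation of $\mathbf{z}$. The cleanest route is the classical three-point identity for Bregman divergences,
\[
D_\Phi(\mathbf{x}\|\mathbf{y})-D_\Phi(\mathbf{x}\|\mathbf{z})
= D_\Phi(\mathbf{z}\|\mathbf{y})+\langle\nabla\Phi(\mathbf{z})-\nabla\Phi(\mathbf{y}),\mathbf{x}-\mathbf{z}\rangle .
\]
It follows by writing out the three divergences from \eqref{eq:Bregman_def_toolbox}: the $\Phi(\mathbf{x})$, $\Phi(\mathbf{y})$, $\Phi(\mathbf{z})$ terms cancel in pairs, and the linear terms regroup.

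Next, substitute $\nabla\Phi(\mathbf{z})-\nabla\Phi(\mathbf{y})=-\eta\Omega(\mathbf{y})$. This reduces the left-hand side to
\[
D_\Phi(\mathbf{z}\|\mathbf{y})+\eta\langle\Omega(\mathbf{y}),\mathbf{z}-\mathbf{x}\rangle .
\]
It then suffices to check the scalar identity
\[
\langle\Omega(\mathbf{y}),\mathbf{z}-\mathbf{x}\rangle
= f(\mathbf{z})-f(\mathbf{x})-D_{f,\Omega}(\mathbf{z}\|\mathbf{y})+D_{f,\Omega}(\mathbf{x}\|\mathbf{y}).
\]
This is immediate from the definition $D_{f,\Omega}(\mathbf{w}\|\mathbf{y})=\langle\Omega(\mathbf{y}),\mathbf{y}-\mathbf{w}\rangle-f(\mathbf{y})+f(\mathbf{w})$. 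Indeed, the difference $D_{f,\Omega}(\mathbf{x}\|\mathbf{y})-D_{f,\Omega}(\mathbf{z}\|\mathbf{y})$ equals $\langle\Omega(\mathbf{y}),\mathbf{z}-\mathbf{x}\rangle+f(\mathbf{x})-f(\mathbf{z})$, and the $f(\mathbf{y})$ terms cancel. Multiplying by $\eta$ and adding $D_\Phi(\mathbf{z}\|\mathbf{y})$ gives exactly the claimed right-hand side.

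Alternatively, the identity should also follow from Lemma~\ref{Lem1} by evaluating $J_{f,\eta,\Omega}$ at $(\mathbf{x},\mathbf{y})$ and at $(\mathbf{z},\mathbf{y})$ and subtracting. The $\Phi^*$ terms and the $\langle\Omega(\mathbf{y}),\mathbf{y}\rangle-f(\mathbf{y})$ terms cancel, leaving $D_\Phi(\mathbf{x}\|\mathbf{z})-D_\Phi(\mathbf{z}\|\mathbf{z})$. Using $D_\Phi(\mathbf{z}\|\mathbf{z})=0$ and rearranging then recovers the statement. I would present the first, self-contained route and mention the second as a consistency check.

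There is no real obstacle. The only care needed is bookkeeping of signs and of the argument order in $D_{f,\Omega}$, which is not symmetric and whose base point is always $\mathbf{y}$. One should also note that $\mathbf{z}$ is well defined by the standing admissibility assumption $\nabla\Phi(\mathbf{y})-\eta\Omega(\mathbf{y})\in\operatorname{int}\operatorname{dom}\Phi^*$, so $\nabla\Phi(\mathbf{z})$ makes sense.
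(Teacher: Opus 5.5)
Your argument is correct, but your primary route differs from the paper's. The paper derives the identity from Lemma~\ref{Lem1}, so it works throughout in the dual representation $D_\Phi(\mathbf{x}\|\mathbf{y})=\langle\nabla\Phi(\mathbf{x})-\nabla\Phi(\mathbf{y}),\mathbf{x}\rangle-\Phi^*(\nabla\Phi(\mathbf{x}))+\Phi^*(\nabla\Phi(\mathbf{y}))$. It proceeds in three steps:
\begin{enumerate}
\item It writes $D_\Phi(\mathbf{x}\|\mathbf{y})-D_\Phi(\mathbf{x}\|F_{\eta\Omega}(\mathbf{y}))$ as a combination of $\Phi^*$-values.
\item It adds and subtracts $\langle\nabla\Phi(F_{\eta\Omega}(\mathbf{y}))-\nabla\Phi(\mathbf{y}),F_{\eta\Omega}(\mathbf{y})\rangle$ to rebuild $D_\Phi(F_{\eta\Omega}(\mathbf{y})\|\mathbf{y})$ in conjugate form.
\item It adds and subtracts $f(F_{\eta\Omega}(\mathbf{y}))$ to produce $D_{f,\Omega}(F_{\eta\Omega}(\mathbf{y})\|\mathbf{y})$.
\end{enumerate}

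You instead use the primal three-point identity, so $\Phi^*$ and the Fenchel--Young equality never appear. The Legendre structure enters only through $\nabla\Phi(\mathbf{z})-\nabla\Phi(\mathbf{y})=-\eta\Omega(\mathbf{y})$. The $f$-part then reduces to a one-line linear identity for differences of $D_{f,\Omega}(\cdot\|\mathbf{y})$ with the base point $\mathbf{y}$ fixed.

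Your route is shorter. It also makes explicit the link to the classical three-point step, which the paper mentions only in remarks. The paper's route keeps everything inside the $J_{f,\eta,\Omega}$ framework that it reuses for the minimization argument in Theorem~\ref{Thm1}.

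Your secondary route is essentially a condensed form of the paper's proof. There you evaluate Lemma~\ref{Lem1} at $(\mathbf{x},\mathbf{y})$ and at $(\mathbf{z},\mathbf{y})$, then subtract. The paper's add-and-subtract manipulation amounts to recomputing $J_{f,\eta,\Omega}(\mathbf{z},\mathbf{y})$ by hand.
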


\begin{proof}
Lemma \ref{Lem1} implies that
\begin{align*}
\begin{aligned}
& D_\Phi(\mathbf{x}\|\mathbf{y})
- \eta(D_{f,\Omega}(\mathbf{x}\|\mathbf{y})- f(\mathbf{x})) \\
=& D_\Phi(\mathbf{x}\| F_{\eta \Omega}(\mathbf{y}))
- \Phi^*(\nabla \Phi(\mathbf{y})-\eta \Omega(\mathbf{y}) ) 
 + \Phi^*(\nabla \Phi(\mathbf{y})) 
-\eta(\langle \Omega(\mathbf{y}), \mathbf{y} \rangle
-f(\mathbf{y})) .
\end{aligned}
\end{align*}
Using this relation, we have
\begin{align*}
\begin{aligned}
&D_\Phi(\mathbf{x} \| \mathbf{y})-D_\Phi(\mathbf{x} \| F_{\eta \Omega}(\mathbf{y})) \\
=
&- \Phi^*(\nabla \Phi(\mathbf{y})-\eta \Omega(\mathbf{y}) ) 
 + \Phi^*(\nabla \Phi(\mathbf{y})) 
-\eta(\langle \Omega(\mathbf{y}), \mathbf{y} \rangle
-f(\mathbf{y})) \\
&+\eta(D_{f,\Omega}(\mathbf{x}\|\mathbf{y})- f(\mathbf{x}))\\
=&
\langle 
\nabla \Phi(F_{\eta \Omega}(\mathbf{y}))-\nabla \Phi(\mathbf{y}), F_{\eta \Omega}(\mathbf{y}) \rangle 
- \Phi^*(\nabla \Phi(F_{\eta \Omega}(\mathbf{y})) ) 
 + \Phi^*(\nabla \Phi(\mathbf{y})) \\
&-\langle 
\nabla \Phi(F_{\eta \Omega}(\mathbf{y}))-\nabla \Phi(\mathbf{y}), F_{\eta \Omega}(\mathbf{y}) \rangle 
-\eta(\langle \Omega(\mathbf{y}), \mathbf{y} \rangle
-f(\mathbf{y}) +f(\mathbf{x}))\\
& +\eta D_{f,\Omega}(\mathbf{x}\|\mathbf{y})\\
=&D_\Phi(F_{\eta \Omega}(\mathbf{y})\|\mathbf{y})
-\langle 
\nabla \Phi(F_{\eta \Omega}(\mathbf{y}))-\nabla \Phi(\mathbf{y}), F_{\eta \Omega}(\mathbf{y}) \rangle \\
&-\eta(\langle \Omega(\mathbf{y}), \mathbf{y} \rangle
-f(\mathbf{y}) +f(\mathbf{x})) 
+\eta D_{f,\Omega}(\mathbf{x}\|\mathbf{y})\\
=&D_\Phi(F_{\eta \Omega}(\mathbf{y})\|\mathbf{y})\\
&-
\eta\big(\langle \Omega(\mathbf{y}), \mathbf{y} -F_{\eta \Omega}(\mathbf{y})\rangle
-f(\mathbf{y}) +f(F_{\eta \Omega}(\mathbf{y}))
-f(F_{\eta \Omega}(\mathbf{y}))
+f(\mathbf{x})\big) \\
&+\eta D_{f,\Omega}(\mathbf{x}\|\mathbf{y})\\
=&
\eta(f(F_{\eta \Omega}(\mathbf{y}))-f(\mathbf{x}) ) 
+D_\Phi(F_{\eta \Omega}(\mathbf{y})\|\mathbf{y})-\eta D_{f,\Omega}(F_{\eta \Omega}(\mathbf{y})\|\mathbf{y})
+\eta D_{f,\Omega}(\mathbf{x}\|\mathbf{y}).
\end{aligned}
\end{align*}
\qed\end{proof}

\subsection{Proof of Theorem \ref{Thm1}}
It is sufficient to show 
\begin{align}
f(F_{\eta \Omega}(\mathbf{x}))
\le f(\mathbf{x}).\Label{BN}
\end{align}
The global generalized relative-smoothness condition \eqref{RS} implies
\begin{align}
J_{f,\eta,\Omega}(\mathbf{x},\mathbf{y})
= \eta f(\mathbf{x}) +
D_\Phi(\mathbf{x}\|\mathbf{y})- \eta D_{f,\Omega}(\mathbf{x}\|\mathbf{y})
\ge \eta f(\mathbf{x})
= J_{f,\eta,\Omega}(\mathbf{x},\mathbf{x}).
\end{align}
Thus, we have
$\min_{\mathbf{y}} J_{f,\eta,\Omega}(\mathbf{x},\mathbf{y})
=J_{f,\eta,\Omega}(\mathbf{x},\mathbf{x})$.

Since only the first term 
$D_\Phi(\mathbf{x}\| F_{\eta \Omega}(\mathbf{y}))$
of RHS of \eqref{BN1} depends on $\mathbf{x}$
and the minimum 
$\min_{\mathbf{x}} D_\Phi(\mathbf{x}\| F_{\eta \Omega}(\mathbf{y}))$ is realized with
$\mathbf{x}=F_{\eta \Omega}(\mathbf{y})$, we have
$\min_{\mathbf{x}} J_{f,\eta,\Omega}(\mathbf{x},\mathbf{y})
=J_{f,\eta,\Omega}(F_{\eta \Omega}(\mathbf{y}),\mathbf{y})$.
Thus, we have
\begin{align*}
\eta f(\mathbf{x}) &=J_{f,\eta,\Omega}(\mathbf{x},\mathbf{x})
\ge J_{f,\eta,\Omega}(F_{\eta \Omega}(\mathbf{x}),\mathbf{x})
\ge J_{f,\eta,\Omega}(F_{\eta \Omega}(\mathbf{x}),F_{\eta \Omega}(\mathbf{x}))\\
&=\eta f(F_{\eta \Omega}(\mathbf{x})),
\end{align*}
which implies \eqref{BN}.

\subsection{Proof of Theorem \ref{Thm2}}
We first derive the monotonicity property needed in both cases of the
proof.  For every $j=1,\ldots,t-1$, apply Lemma~\ref{Lem2} with
$\mathbf{x}=\mathbf{y}=\mathbf{x}_j,
\eta=\eta_j,
F_{\eta_j\Omega}(\mathbf{x}_j)=\mathbf{x}_{j+1}$.
Since
$D_\Phi(\mathbf{x}_j\|\mathbf{x}_j)=0$
and
$D_{f,\Omega}(\mathbf{x}_j\|\mathbf{x}_j)=0$,
Lemma~\ref{Lem2} gives
\begin{align}
\eta_j
\bigl(f(\mathbf{x}_{j+1})-f(\mathbf{x}_j)\bigr)
=
-D_\Phi(\mathbf{x}_j\|\mathbf{x}_{j+1})
-
\Bigl(
D_\Phi(\mathbf{x}_{j+1}\|\mathbf{x}_j)
-\eta_jD_{f,\Omega}
(\mathbf{x}_{j+1}\|\mathbf{x}_j)
\Bigr).
\Label{eq:Thm2-one-step-identity}
\end{align}
By the trajectory-wise generalized relative-smoothness condition \eqref{RS2},
\begin{equation}
D_\Phi(\mathbf{x}_{j+1}\|\mathbf{x}_j)
-\eta_jD_{f,\Omega}
(\mathbf{x}_{j+1}\|\mathbf{x}_j)
\ge 0.
\Label{eq:Thm2-RS-nonnegative}
\end{equation}
Since the Bregman divergence
$D_\Phi(\mathbf{x}_j\|\mathbf{x}_{j+1})$ is also nonnegative,
\eqref{eq:Thm2-one-step-identity} and
\eqref{eq:Thm2-RS-nonnegative} imply
\begin{equation}
f(\mathbf{x}_{j+1})\le f(\mathbf{x}_j),
\qquad
j=1,\ldots,t-1.
\Label{eq:Thm2-monotonicity}
\end{equation}

We now distinguish two cases.
First, assume that there exists an index
$k\in\{1,\ldots,t\}$ such that $\mathbf{x}_k\in U$.
Since $\mathbf{x}_k\in U$, the definition of the supremum gives
\begin{equation}
f(\mathbf{x}_k)-f(\mathbf{x}_*)
\le
\sup_{\mathbf{x}\in U}
\bigl(f(\mathbf{x})-f(\mathbf{x}_*)\bigr).
\Label{eq:Thm2-gap-in-U}
\end{equation}
If $k<t$, repeated application of \eqref{eq:Thm2-monotonicity}
for $j=k,\ldots,t-1$ gives
\begin{equation}
f(\mathbf{x}_t)\le f(\mathbf{x}_k).
\Label{eq:Thm2-after-entering-U}
\end{equation}
When $k=t$, \eqref{eq:Thm2-after-entering-U} holds with equality.
Combining \eqref{eq:Thm2-gap-in-U} and
\eqref{eq:Thm2-after-entering-U}, we obtain
\begin{equation}
f(\mathbf{x}_t)-f(\mathbf{x}_*)
\le
\sup_{\mathbf{x}\in U}
\bigl(f(\mathbf{x})-f(\mathbf{x}_*)\bigr).
\Label{eq:Thm2-U-case}
\end{equation}
Hence, \eqref{DJ1} holds in this case.

Next, we assume that $\mathbf{x}_j \notin U$ for $j=1,2,\ldots,t$.
The trajectory-wise generalized relative-smoothness condition \eqref{RS2} and
the star-convexity of $f$ imply
\begin{align}
&D_\Phi(F_{\eta_j\Omega}(\mathbf{x}_j)\|\mathbf{x}_j)
-\eta_j D_{f,\Omega}(F_{\eta_j\Omega}(\mathbf{x}_j)\|\mathbf{x}_j)
+\eta_j D_{f,\Omega}(\mathbf{x}_*\|\mathbf{x}_j)
\ge 0
\Label{NM}
\end{align}
for $j=1,2,\ldots,t-1$.
Thus, applying Lemma~\ref{Lem2} with
$\mathbf{x}=\mathbf{x}_*$, $\mathbf{y}=\mathbf{x}_j$, and
$\eta=\eta_j$, and using
$\mathbf{x}_{j+1}=F_{\eta_j\Omega}(\mathbf{x}_j)$, we obtain
\begin{align}
D_\Phi(\mathbf{x}_*\|\mathbf{x}_j)
-D_\Phi(\mathbf{x}_*\|\mathbf{x}_{j+1})
&\stackrel{(a)}{\ge}
\eta_j\bigl(f(\mathbf{x}_{j+1})-f(\mathbf{x}_*)\bigr)
\notag\\
&\stackrel{(b)}{\ge}
\eta_j\bigl(f(\mathbf{x}_t)-f(\mathbf{x}_*)\bigr),
\end{align}
where $(a)$ follows from Lemma~\ref{Lem2} and \eqref{NM}, 
and
$(b)$ follows from \eqref{eq:Thm2-monotonicity}.
Taking the sum, we have
\begin{align*}
\begin{aligned}
&D_\Phi(\mathbf{x}_*\|\mathbf{x}_1)
\ge
D_\Phi(\mathbf{x}_*\|\mathbf{x}_1)
-D_\Phi(\mathbf{x}_*\|\mathbf{x}_t)
\\
=&
\sum_{j=1}^{t-1}
\left[
D_\Phi(\mathbf{x}_*\|\mathbf{x}_j)
-D_\Phi(\mathbf{x}_*\|\mathbf{x}_{j+1})
\right]
\ge
\left(\sum_{j=1}^{t-1}\eta_j\right)
\bigl(f(\mathbf{x}_t)-f(\mathbf{x}_*)\bigr).
\end{aligned}
\end{align*}
Since $\sum_{j=1}^{t-1}\eta_j>0$, this implies
\begin{align}
f(\mathbf{x}_t)-f(\mathbf{x}_*)
\le
\frac{D_\Phi(\mathbf{x}_*\|\mathbf{x}_1)}
{\sum_{j=1}^{t-1}\eta_j},
\end{align}
and hence \eqref{DJ1} holds in this case.

These two cases exhaust all
possibilities, and therefore prove Theorem~\ref{Thm2}.
\qed

\section{Proof of Theorem \ref{thm:refined_eta_general_phi}}\Label{S-thm:refined_eta_general_phi}
We proceed in three steps.

\noindent\textbf{Step 1: Relation among $\|\mathbf{x}-\mathbf{y}\|$, $\eta\|\Omega(\mathbf{y})\|_*$, and 
$D_\Phi(\mathbf{x}\|\mathbf{y})$.}
Let $\mathbf{u}:=\nabla\Phi(\mathbf{y})$ and $\mathbf{v}:=\eta\Omega(\mathbf{y})$.
Since $\mathbf{x}=F_{\eta\Omega}(\mathbf{y})$, we have
$\mathbf{x}=\nabla\Phi^*(\mathbf{u}-\mathbf{v})$ and $\mathbf{y}=\nabla\Phi^*(\mathbf{u})$,
where $\Phi^*$ is the convex conjugate of $\Phi$.

Since $\Phi$ is $\sigma$-strongly convex with respect to
$\|\cdot\|$, the conjugate-gradient bound
\eqref{eq:conjugate_grad_Lip} gives
\begin{align}
\|\mathbf{x}-\mathbf{y}\|=\|\nabla\Phi^*(\mathbf{u}-\mathbf{v})-\nabla\Phi^*(\mathbf{u})\|
\le \frac{1}{\sigma}\|\mathbf{v}\|_* = \frac{\eta}{\sigma}\|\Omega(\mathbf{y})\|_*.
\Label{MK4}
\end{align}
Since $\Phi$ is $\beta$-smooth with respect to
$\|\cdot\|$, the conjugate Bregman bound
\eqref{eq:conjugate_bregman_lower} gives
\begin{align}
D_{\Phi^*}(\mathbf{u}\|\mathbf{u}-\mathbf{v})\ge \frac{1}{2\beta}\|\mathbf{v}\|_*^2.
\end{align}
Using the Bregman duality identity
$D_\Phi(\mathbf{x}\|\mathbf{y})=D_{\Phi^*}(\nabla\Phi(\mathbf{y})\|\nabla\Phi(\mathbf{x}))$
and $\nabla\Phi(\mathbf{x})=\mathbf{u}-\mathbf{v}$, we get
\begin{equation}
\Label{eq:Dphi_lower_v}
D_\Phi(\mathbf{x}\|\mathbf{y})=D_{\Phi^*}(\mathbf{u}\|\mathbf{u}-\mathbf{v})\ge \frac{1}{2\beta}\|\mathbf{v}\|_*^2
= \frac{\eta^2}{2\beta}\|\Omega(\mathbf{y})\|_*^2.
\end{equation}

\noindent\textbf{Step 2: Bounding $D_{f,\Omega}$ by $\|\mathbf{x}-\mathbf{y}\|$ and the oracle mismatch.}
Decompose
\begin{align}
D_{f,\Omega}(\mathbf{x}\|\mathbf{y})
= \underbrace{\bigl(f(\mathbf{x})-f(\mathbf{y})-\langle \nabla f(\mathbf{y}),\mathbf{x}-\mathbf{y}\rangle\bigr)}_{=:D_f(\mathbf{x}\|\mathbf{y})}
+ \langle \nabla f(\mathbf{y})-\Omega(\mathbf{y}),\mathbf{x}-\mathbf{y}\rangle.
\Label{MK1}
\end{align}
The property \eqref{eq:Lsmooth_bregman_upper} of $L$-smoothness of $f$ implies
\begin{align}
D_f(\mathbf{x}\|\mathbf{y})\le \frac{L}{2}\|\mathbf{x}-\mathbf{y}\|^2.
\Label{MK2}
\end{align}
Cauchy--Schwarz inequality implies
\begin{align}
\langle \nabla f(\mathbf{y})-\Omega(\mathbf{y}),\mathbf{x}-\mathbf{y}\rangle
\le \|\nabla f(\mathbf{y})-\Omega(\mathbf{y})\|_*\,\|\mathbf{x}-\mathbf{y}\|.
\Label{MK3}
\end{align}
Substituting \eqref{MK2} and \eqref{MK3} into \eqref{MK1}, we have
\begin{equation}
\Label{eq:DfOmega_bound}
D_{f,\Omega}(\mathbf{x}\|\mathbf{y})\le \frac{L}{2}\|\mathbf{x}-\mathbf{y}\|^2 + \|\nabla f(\mathbf{y})-\Omega(\mathbf{y})\|_*\,\|\mathbf{x}-\mathbf{y}\|.
\end{equation}
Plugging the evaluation \eqref{MK4} into
\eqref{eq:DfOmega_bound}, we have
\begin{align}
D_{f,\Omega}(\mathbf{x}\|\mathbf{y})
\le \frac{L}{2}\frac{\eta^2}{\sigma^2}\|\Omega(\mathbf{y})\|_*^2
+ \frac{\eta}{\sigma}\|\nabla f(\mathbf{y})-\Omega(\mathbf{y})\|_*\,\|\Omega(\mathbf{y})\|_*.
\Label{MK7}
\end{align}

\noindent\textbf{Step 3: Enforcing $\eta D_{f,\Omega}\le D_\Phi$.}
Rearranging \eqref{eq:eta_condition}, we have
$\frac{L}{2}\frac{\eta}{\sigma^2} + \frac{1}{\sigma}\delta(\mathbf{y})\le \frac{1}{2\beta}$.
Multiplying $\eta^2>0$, we have
\begin{align}
\frac{L}{2}\frac{\eta^3}{\sigma^2} + \frac{\eta^2}{\sigma}\delta(\mathbf{y})
\le \frac{\eta^2}{2\beta}.
\Label{MK5}
\end{align}
Multiplying $\eta$ in \eqref{MK7} and using \eqref{eq:delta_def}, we have
\begin{align*}
\begin{aligned}
\eta D_{f,\Omega}(\mathbf{x}\|\mathbf{y})
\le  \left(\frac{L}{2}\frac{\eta^3}{\sigma^2}
+ \frac{\eta^2}{\sigma}\delta(\mathbf{y})\right)\|\Omega(\mathbf{y})\|_*^2 
\stackrel{(a)}{\le} & \frac{\eta^2}{2\beta}\|\Omega(\mathbf{y})\|_*^2
\stackrel{(b)}{\le} D_\Phi(\mathbf{x}\|\mathbf{y}),
\end{aligned}
\end{align*}
where $(a)$ follows from \eqref{MK5}, and
$(b)$ follows from \eqref{eq:Dphi_lower_v}.

\section{Proof of Theorem \ref{Thm6}}
\Label{S-Thm6}
By \eqref{DNB}, it is sufficient to show that every point 
$\mathbf{x}$ satisfying
\(
\|\mathbf{x}-\mathbf{x}_*\|_2
>
\frac{L}{2\mu}
\left(1+\frac{1}{c}\right)\epsilon\sqrt d
\)
also satisfies
$M(\mathbf{x})>R(\mathbf{x})/c$.
We establish this implication by bounding $R(\mathbf{x})$ from above
and $M(\mathbf{x})$ from below.

For each $i=1,\ldots,d$, the descent inequality applied to
$\mathbf{x}\pm\epsilon\mathbf{e}_i$, together with convexity, gives
\begin{align}
0
&\le
f(\mathbf{x}+\epsilon\mathbf{e}_i)
+f(\mathbf{x}-\epsilon\mathbf{e}_i)
-2f(\mathbf{x})
\le
L\epsilon^2.
\Label{eq:Thm6-second-difference-bound}
\end{align}
Hence,
\begin{equation}
0\le r_i(\mathbf{x})\le\frac{L\epsilon}{2},
\qquad
R(\mathbf{x})\le\frac{L\epsilon}{2}\sqrt d.
\Label{eq:Thm6-R-bound}
\end{equation}
Moreover, the fundamental theorem of calculus and the
$L$-Lipschitz continuity of $\nabla f$ give
\begin{align}
|m_i(\mathbf{x})-\partial_i f(\mathbf{x})|
\le
\frac{1}{2\epsilon}
\int_{-\epsilon}^{\epsilon}
\left|
\partial_i f(\mathbf{x}+s\mathbf{e}_i)
-\partial_i f(\mathbf{x})
\right|\,ds
\le
\frac{1}{2\epsilon}
\int_{-\epsilon}^{\epsilon}L|s|\,ds
=
\frac{L\epsilon}{2}.
\Label{eq:Thm6-coordinate-error}
\end{align}
Therefore,
\begin{equation}
M(\mathbf{x})
\ge
\|\nabla f(\mathbf{x})\|_2
-\frac{L\epsilon}{2}\sqrt d.
\Label{eq:Thm6-M-bound}
\end{equation}

Since $\mathbf{x}_*$ is a minimizer and $f$ is differentiable,
$\nabla f(\mathbf{x}_*)=0$.  Strong convexity and the
Cauchy--Schwarz inequality imply
\begin{equation}
\|\nabla f(\mathbf{x})\|_2
\ge
\mu\|\mathbf{x}-\mathbf{x}_*\|_2.
\Label{eq:Thm6-gradient-lower-bound}
\end{equation}
Consequently, if
\[
\|\mathbf{x}-\mathbf{x}_*\|_2
>
\frac{L}{2\mu}
\left(1+\frac{1}{c}\right)\epsilon\sqrt d,
\]
then \eqref{eq:Thm6-R-bound}--\eqref{eq:Thm6-gradient-lower-bound}
give
\begin{align}
M(\mathbf{x})-\frac{R(\mathbf{x})}{c}
&\ge
\mu\|\mathbf{x}-\mathbf{x}_*\|_2
-\frac{L\epsilon}{2}
\left(1+\frac{1}{c}\right)\sqrt d
>0.
\end{align}
Thus, $\mathbf{x}\notin V_{\epsilon,c}$ by \eqref{DNB}.
Taking the contraposition proves \eqref{DJ9}.
\qed

\section{Proof of Theorem \ref{Thm5}}\Label{S-Thm5}
We first prepare the matrix inequality used in the proof of
Theorem~\ref{Thm5}.

\begin{lemma}[Angle bound for SPD matrices]\Label{lem:spd-angle}
Let $A$ be SPD and assume $\mu I\preceq A\preceq LI$ with
$0<\mu\le L$.
Then for all $\mathbf{v}\neq 0$,
\begin{equation}\Label{eq:spd-angle}
\frac{\langle A\mathbf{v},\mathbf{v}\rangle}
{\|A\mathbf{v}\|_2\,\|\mathbf{v}\|_2}
\ge \frac{2\sqrt{\mu L}}{\mu+L}.
\end{equation}
\end{lemma}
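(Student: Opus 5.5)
This is the classical Kantorovich-type angle bound, and the plan is to reduce it to the Kantorovich inequality via a change of variables. Set $\mathbf{w}:=A^{1/2}\mathbf{v}\neq0$, which is possible because $A$ is SPD. Then $\langle A\mathbf{v},\mathbf{v}\rangle=\|\mathbf{w}\|_2^2$, $\|A\mathbf{v}\|_2^2=\langle A\mathbf{w},\mathbf{w}\rangle$, and $\|\mathbf{v}\|_2^2=\langle A^{-1}\mathbf{w},\mathbf{w}\rangle$. The squared left-hand side of \eqref{eq:spd-angle} therefore becomes
\[
\frac{\|\mathbf{w}\|_2^4}{\langle A\mathbf{w},\mathbf{w}\rangle\,\langle A^{-1}\mathbf{w},\mathbf{w}\rangle}.
\]
It then suffices to prove the Kantorovich inequality
\[
\langle A\mathbf{w},\mathbf{w}\rangle\langle A^{-1}\mathbf{w},\mathbf{w}\rangle\le\frac{(\mu+L)^2}{4\mu L}\|\mathbf{w}\|_2^4 .
\]
The left-hand side of \eqref{eq:spd-angle} is positive, so taking square roots yields the claim.

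To prove Kantorovich, I would use the elementary eigenvalue trick. Every eigenvalue $\lambda\in[\mu,L]$ of $A$ satisfies $(\lambda-\mu)(\lambda-L)\le0$, which is equivalent to $\lambda+\mu L/\lambda\le\mu+L$. This gives the operator inequality $A+\mu L A^{-1}\preceq(\mu+L)I$, and hence
\[
\langle A\mathbf{w},\mathbf{w}\rangle+\mu L\langle A^{-1}\mathbf{w},\mathbf{w}\rangle\le(\mu+L)\|\mathbf{w}\|_2^2 .
\]
Apply AM--GM to the two nonnegative terms on the left, $2\sqrt{ab}\le a+b$ with $a=\langle A\mathbf{w},\mathbf{w}\rangle$ and $b=\mu L\langle A^{-1}\mathbf{w},\mathbf{w}\rangle$. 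This gives $2\sqrt{\mu L}\sqrt{\langle A\mathbf{w},\mathbf{w}\rangle\langle A^{-1}\mathbf{w},\mathbf{w}\rangle}\le(\mu+L)\|\mathbf{w}\|_2^2$, which is exactly the inequality needed.

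The main obstacle is getting the right normalization: the substitution $\mathbf{w}=A^{1/2}\mathbf{v}$ and the choice of weight $\mu L$ on $A^{-1}$. Once these are chosen, everything is a one-line computation.

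For the subsequent Theorem~\ref{Thm5}, I would write $\nabla f(\mathbf{x})=\nabla f(\mathbf{x})-\nabla f(\mathbf{x}_*)=A(\mathbf{x}-\mathbf{x}_*)$ with $A=\int_0^1\nabla^2 f(\mathbf{x}_*+s(\mathbf{x}-\mathbf{x}_*))\,ds$. By \eqref{eq:hess_bounds}, $A$ satisfies $\mu I\preceq A\preceq LI$, so applying the lemma with $\mathbf{v}=\mathbf{x}-\mathbf{x}_*$ gives the bound.
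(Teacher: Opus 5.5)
Your proof is correct and follows the paper's route: the substitution $A^{1/2}\mathbf{v}$ reduces the claim to the Kantorovich inequality, followed by taking square roots. The only difference is that you prove Kantorovich yourself, via the operator inequality $A+\mu L A^{-1}\preceq(\mu+L)I$ and AM--GM, whereas the paper simply cites Newman (1960) for it.
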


\begin{proof}
We invoke a classical Kantorovich inequality.
For SPD $A$ with eigenvalues in $[\mu,L]$, any
$\mathbf{u}\neq 0$ satisfies the relation
\begin{equation}\Label{eq:kantorovich}
\langle \mathbf{u}, A \mathbf{u}\rangle
\langle \mathbf{u}, A^{-1}\mathbf{u}\rangle
\le
\frac{(\mu+L)^2}{4\mu L}
\langle\mathbf{u}, \mathbf{u}\rangle^2.
\end{equation}
(See, e.g., Newman (1960) \cite[p.33]{Newman1960}
for an elementary proof; Strang (1960) \cite[p.468]{Strang1960}
for a related generalization and sharpness discussion.)
Now set $\mathbf{u}:=A^{1/2}\mathbf{v}$ (well-defined since $A$ is SPD).
Then, we have the relations
$\langle\mathbf{u},\mathbf{u}\rangle
=\langle\mathbf{v},A\mathbf{v}\rangle
=\langle A\mathbf{v},\mathbf{v}\rangle$,
$\langle\mathbf{u},A\mathbf{u}\rangle
=\langle\mathbf{v},A^2\mathbf{v}\rangle
=\|A\mathbf{v}\|_2^2$,
and
$\langle\mathbf{u},A^{-1}\mathbf{u}\rangle
=\langle\mathbf{v},\mathbf{v}\rangle
=\|\mathbf{v}\|_2^2$.
Plugging these into \eqref{eq:kantorovich} yields
\(
\|A\mathbf{v}\|_2^2\,\|\mathbf{v}\|_2^2
\le
\frac{(\mu+L)^2}{4\mu L}
\langle A\mathbf{v},\mathbf{v}\rangle^2,
\)
which implies
\(
\left(
\frac{\langle A\mathbf{v},\mathbf{v}\rangle}
{\|A\mathbf{v}\|_2\,\|\mathbf{v}\|_2}
\right)^2
\ge
\frac{4\mu L}{(\mu+L)^2}.
\)
Taking square-roots (all quantities are nonnegative) proves
\eqref{eq:spd-angle}.
\qed
\end{proof}

We now prove Theorem~\ref{Thm5}.
Fix $\mathbf{x}\neq\mathbf{x}_*$ and set
$\mathbf{v}:=\mathbf{x}-\mathbf{x}_*\neq0$.
To prove \eqref{DJJ}, it is sufficient to construct a symmetric
positive definite matrix $\bar H$ satisfying
\begin{equation}
\nabla f(\mathbf{x})=\bar H\mathbf{v},
\qquad
\mu I\preceq\bar H\preceq LI,
\Label{eq:Thm5-proof-goal}
\end{equation}
because Lemma~\ref{lem:spd-angle}, applied to
$A=\bar H$, then gives the desired lower bound.

For completeness, define
\begin{equation}\Label{eq:cosdef}
\cos\theta(\mathbf{x})
:=
\frac{\langle \nabla f(\mathbf{x}),\,
\mathbf{x}-\mathbf{x}_*\rangle}
{\|\mathbf{x}-\mathbf{x}_*\|_2\,
\|\nabla f(\mathbf{x})\|_2}.
\end{equation}
Since $\mathbf{x}_*$ is a minimizer of the differentiable function
$f$, we have $\nabla f(\mathbf{x}_*)=0$.  By the fundamental theorem
of calculus,
\begin{equation}\Label{eq:FTC}
\nabla f(\mathbf{x})-\nabla f(\mathbf{x}_*)
=
\int_0^1
\frac{d}{dt}\nabla f(\mathbf{x}_*+t\mathbf{v})\,dt
=
\int_0^1
\nabla^2 f(\mathbf{x}_*+t\mathbf{v})\,
\mathbf{v}\,dt.
\end{equation}
Define the averaged Hessian
\begin{equation}\Label{eq:Hbar}
\bar H
:=
\int_0^1
\nabla^2 f(\mathbf{x}_*+t\mathbf{v})\,dt.
\end{equation}
It follows from \eqref{eq:FTC} and
$\nabla f(\mathbf{x}_*)=0$ that
\begin{equation}
\nabla f(\mathbf{x})=\bar H\mathbf{v}.
\Label{eq:Thm5-gradient-representation}
\end{equation}
Moreover, because the Loewner order is preserved under integration,
the uniform Hessian bounds \eqref{eq:hess_bounds} give
\begin{equation}\Label{eq:Hbar-bounds}
\mu I\preceq\bar H\preceq LI.
\end{equation}
Thus, \eqref{eq:Thm5-gradient-representation} and
\eqref{eq:Hbar-bounds} establish \eqref{eq:Thm5-proof-goal}.

Applying Lemma~\ref{lem:spd-angle} with
$A=\bar H$, and then using
\eqref{eq:Thm5-gradient-representation} and
$\mathbf{v}=\mathbf{x}-\mathbf{x}_*$, we obtain
\begin{align}
\frac{\langle\nabla f(\mathbf{x}),
\mathbf{x}-\mathbf{x}_*\rangle}
{\|\nabla f(\mathbf{x})\|_2
\|\mathbf{x}-\mathbf{x}_*\|_2}
=
\frac{\langle\bar H\mathbf{v},\mathbf{v}\rangle}
{\|\bar H\mathbf{v}\|_2\|\mathbf{v}\|_2}
\ge
\frac{2\sqrt{\mu L}}{\mu+L}.
\end{align}
This is precisely \eqref{DJJ}, and therefore proves
Theorem~\ref{Thm5}.
\qed

\section{Discussion and Conclusions}
\Label{sec:discussion}

\subsection{A unifying proof skeleton: from the Bregman three-point identity to $\Omega$-driven mirror descent}
A central technical message of this paper is that the classical mirror-descent convergence analysis,
ultimately powered by the Bregman three-point identity,
admits a structural generalization to the case where the gradient $\nabla f$ is replaced by a general vector field $\Omega$.
In the gradient case, the standard proof pattern uses the three-point identity to telescope the key inequality;
a clean instance appears in \cite{FawziLecture11Bregman2023}, where the step between (10) and (11) is exactly the
three-point identity insertion that converts an inner-product term into a difference of Bregman divergences.

Our Lemmas~\ref{Lem1}--\ref{Lem2} can be viewed as an $\Omega$-compatible reparameterization of this proof skeleton:
the auxiliary functional $J_{f,\eta,\Omega}$ is designed so that the mirror update interacts with Bregman geometry
in a way that preserves telescoping and yields explicit bounds once a suitable generalized relative-smoothness condition holds
(see Section~\ref{S6}). 
This viewpoint clarifies why several parts of our derivations may look familiar when $\Omega=\nabla f$:
in that case the argument follows the classical route based on the three-point identity.
However, the contribution is that the \emph{same} telescoping mechanism can be made to work for general $\Omega$,
thereby producing a single evaluation template that applies uniformly to disparate algorithms
once they can be cast as an $\Omega$-driven mirror update (Section~\ref{S2}).

\subsection{Trajectory-wise certification as a design principle}
A second message is methodological: instead of relying solely on global assumptions,
we emphasize a \emph{trajectory-wise} (a posteriori) condition (e.g., \eqref{RS2}) that can be verified along the realized iterates.
This shift is practically motivated (determinism and verifiability) and technically natural in our framework:
once the core proof identity is written in an $\Omega$-driven form,
the only place where problem regularity enters is through an inequality that compares $\eta D_{f,\Omega}$ to $D_\Phi$
along the actual update step (Theorem~\ref{Thm2}).
In this sense, the paper provides not just a convergence bound but also a \emph{certificate format}:
if the trajectory-wise inequality holds on a time window, then a last-iterate function-value guarantee follows automatically
(see \eqref{DJ1} in Theorem~\ref{Thm2}).

\subsection{Why generalized star-convexity outside an exceptional region becomes the relevant property}
One technically surprising aspect emerges when we instantiate the general theory with deterministic finite differences.
At first glance, natural analytic assumptions for finite differences are classical:
$f\in C^2(\mathbb{R}^d)$ is $\mu$-strongly convex and $L$-smooth, i.e., $f$ has uniform Hessian bounds (Section~\ref{S4}).
Indeed, such assumptions allow one to control approximation errors and to derive quantitative geometric bounds
(e.g., angle/inner-product lower bounds and neighborhood estimates; see Theorems~\ref{Thm5}--\ref{Thm7}).

However, once we pass through the general $\Omega$-driven mirror-descent template,
the property that \emph{interfaces} with the main evaluation inequality is not ``convexity'' per se,
but rather a star-convexity-type inequality written directly in terms of $\Omega$:
$\langle \Omega(\mathbf{x}),\, \mathbf{x}-\mathbf{x}_*\rangle \;\ge\; f(\mathbf{x})-f(\mathbf{x}_*)$,
and, for deterministic finite differences, its variant outside an exceptional region that allows an explicit error floor.
This interface condition is exactly the assumption used in Theorem~\ref{Thm2} and in the definition of
generalized star-convexity outside an exceptional region in Section~\ref{S2}.
In other words, the general evaluation bound reveals that the correct notion of ``convex-like geometry''
for last-iterate function-value guarantees is generalized star-convexity outside an exceptional region,
because this is exactly what turns the $\Omega$-driven progress term into a function-value gap (Theorem~\ref{Thm2}).

This perspective is somewhat counterintuitive: one may expect that convexity (or strong convexity) is the
primary structural assumption, yet in our $\Omega$-driven analysis convexity becomes a \emph{sufficient tool}
to establish the generalized star-convex inequality, while the latter is the \emph{actual interface condition}
required by the general template (Section~\ref{S2} and Section~\ref{S4}).
Thus, 
for deterministic zeroth-order mirror descent,
it is often more informative to ask whether an $\Omega$-dependent star-convexity inequality holds
(outside a small neighborhood) than to insist on classical convexity alone.

\subsection{Finite differences and the emergence of robust conic dominance}
A further ``hidden'' mechanism becomes visible only after adopting the general template.
To apply Theorem~\ref{Thm2} to deterministic finite-difference mirror descent, one must verify
the generalized star-convexity inequality outside the exceptional region for the specific vector field $\Omega_{\epsilon,c}$
constructed in Section~\ref{S4}.
This verification is not automatic: finite differences produce a surrogate direction whose inner products must be controlled
\emph{uniformly} against a cone of directions aligned with the true displacement/gradient.
Our analysis shows that this requirement is naturally phrased as a \emph{robust conic dominance} problem:
construct a single (scaled) proxy direction that dominates all admissible true directions on a circular cone,
uniformly over an uncertainty set induced by finite-difference errors.
This is precisely where conic duality, and in particular second-order-cone geometry, enters the analysis, although it is not apparent from the finite-difference formulas themselves. 

The result proved in the appendix determines how much the central-difference vector must be scaled so that its inner product dominates that of every gradient consistent with the observed function values, for all displacement directions in the prescribed cone. 
This uniform dominance verifies the required star-convexity interface for $\Omega_{\epsilon,c}$ and thereby allows the general $\Omega$-driven bound to be applied.

Conceptually, the role of robust conic dominance is to \emph{manufacture} the star-convexity interface inequality
for a zeroth-order surrogate $\Omega_{\epsilon,c}$.
Thus, the appearance of a conic-robustness subproblem is not an added complication,
but rather the mechanism that translates deterministic approximation error into a clean error-floor term
within the general last-iterate bound (Theorems~\ref{Thm3} and \ref{Thm7}).

\subsection{Limitations and future directions}

The certificate established here is finite-horizon and conditional on the
realized trajectory.  An asymptotic statement requires divergence of the
accumulated stepsizes, which is not guaranteed here for every possible
line-search rule.  Moreover, the conic formula is exact for the containing ball
when $d\ge2$ and is used only as a sufficient box scaling in the
finite-difference construction.

An interesting direction is to refine the robust conic dominance step for other deterministic direction sets
(orthogonal blocks, lattice stencils, deterministic quadrature designs) and to quantify how the corresponding uncertainty sets
change the size of the exceptional region (error floor).
Another direction is to explore whether the generalized star-convexity interface condition can be established
under weaker smoothness assumptions, potentially extending deterministic certificates to broader non-Euclidean regimes.

\begin{acknowledgements}
During the preparation of this manuscript, the author used Microsoft Copilot (enterprise/Exchange account, GPT5.2/GPT 5.6 Thinking) 
to assist in several stages of the research process, including performing initial literature surveys, refining grammatical expressions, and generating candidate formulations of Theorems 
~\ref{thm:refined_eta_general_phi}, \ref{thm:alpha}, \ref{Thm5}, \ref{Thm6} to explore possible proof routes. All statements, proofs, and references were critically assessed, corrected where necessary, and rigorously completed by the author, who takes full responsibility for the correctness and originality of the final manuscript.

This work was funded by the General R\&D Projects of 1+1+1 CUHK--CUHK(SZ)--GDST Joint Collaboration Fund (Grant No. GRDP2025-022), the Guangdong Provincial Quantum Science Strategic Initiative (Grant No. GDZX2505003), and the National Natural Science Foundation of China under Grant 62171212.
\end{acknowledgements}

\appendix

The purpose of this appendix is to analyze the uniform inner-product dominance condition used in the deterministic finite-difference construction of the main manuscript. Given an uncertainty set for the unknown gradient, the problem is to determine a single scaling of its center whose inner product dominates that of every admissible gradient over the corresponding family of directions. We refer to this geometric problem as robust conic dominance.

We first solve the robust conic-dominance problem exactly for a closed Euclidean ball. In dimensions at least two, we determine the exact common scaling; the one-dimensional case has a separate exact value. We then apply the ball result to the coordinate uncertainty box used in the main manuscript. Since the box is contained in a Euclidean ball with the same center and radius equal to the Euclidean norm of its half-width vector, the ball result yields a dimension-uniform sufficient scaling for the box.

This distinction is important. Exactness is established for the ball problem, whereas only sufficiency is transferred to the box by set inclusion. We first state the two target results, then prove the exact ball theorem through pointwise, radial, angular, and one-dimensional reductions, and finally deduce the box theorem from the containment relation.

\section{Robust conic dominance for balls and boxes}
\Label{S-target-results}

For $\mathbf{m}\in\mathbb{R}^d$ and $R\ge 0$, let
\[
\mathbb{B}(\mathbf{m},R)
:=
\{\mathbf{x}\in\mathbb{R}^d:
\|\mathbf{x}-\mathbf{m}\|_2\le R\}
\]
denote the closed Euclidean ball. For a nonzero vector $\mathbf{x}$ and $c\in(0,1]$, define
\[
\mathcal{C}(\mathbf{x},c)
:=
\{\mathbf{y}\in\mathbb{R}^d:
\langle\mathbf{x},\mathbf{y}\rangle
\ge c\|\mathbf{x}\|_2\|\mathbf{y}\|_2\}.
\]
Positive rescaling of the cone axis leaves this set unchanged: for every
nonzero $\mathbf{v}\in\mathbb{R}^d$ and every $\lambda>0$,
\begin{equation}
\mathcal{C}(\lambda\mathbf{v},c)
=
\mathcal{C}(\mathbf{v},c).
\Label{eq:cone-positive-scaling}
\end{equation}
Indeed, multiplication of the cone axis by $\lambda>0$ multiplies both
sides of the defining inequality by the same positive scalar.
For a cone $K\subseteq\mathbb{R}^d$, we use the dual-cone convention
\[
K^*
:=
\{\mathbf{w}\in\mathbb{R}^d:
\langle\mathbf{w},\mathbf{y}\rangle\ge 0
\text{ for every }\mathbf{y}\in K\}.
\]

Fix $\mathbf{m}\neq 0$, $c\in(0,1]$, and a nonzero point $\mathbf{x}$. We define the pointwise dominance value by
\begin{equation}
\alpha_{\mathrm{pt}}(\mathbf{x};\mathbf{m},c)
:=
\inf\Bigl\{
\alpha>0:
\langle\alpha\mathbf{m}-\mathbf{x},\mathbf{y}\rangle\ge 0
\text{ for every }\mathbf{y}\in\mathcal{C}(\mathbf{x},c)
\Bigr\}.
\Label{eq:pointwise-alpha}
\end{equation}
Thus $\mathbf{x}$ is fixed in \eqref{eq:pointwise-alpha}, whereas $\mathbf{y}$ ranges over the entire cone associated with that point.

For $0<R<c\|\mathbf{m}\|_2$, define the robust ball value by
\begin{align}
\alpha_{\mathrm{ball}}(\mathbf{m},R,c)
:=
\inf\Bigl\{
\alpha>0:
&\langle\alpha\mathbf{m}-\mathbf{x},\mathbf{y}\rangle\ge 0,
\notag\\[-1mm]
&\forall\mathbf{x}\in\mathbb{B}(\mathbf{m},R),
\quad
\forall\mathbf{y}\in\mathcal{C}(\mathbf{x},c)
\Bigr\}.
\Label{eq:ball-alpha}
\end{align}
The assumption $R<c\|\mathbf{m}\|_2\le\|\mathbf{m}\|_2$ implies that the ball does not contain the origin, so every cone occurring in \eqref{eq:ball-alpha} is well defined.

For vectors $\mathbf{l},\mathbf{h}\in\mathbb{R}^d$ satisfying
$l_i\le h_i$ for every $i$, define
\begin{align}
[\mathbf{l},\mathbf{h}]
:=
\{\boldsymbol{\xi}\in\mathbb{R}^d:
 l_i\le\xi_i\le h_i
\text{ for }i=1,\ldots,d\}.\label{BNA}
\end{align}

Our first target is the exact robust dominance theorem for the containing ball.

\begin{theorem}[Exact robust dominance scaling on a ball]
\Label{thm:alpha-B}
Let $\mathbf{m}\in\mathbb{R}^d\setminus\{0\}$,
$c\in(0,1]$, and $0<R<c\|\mathbf{m}\|_2$.
If $d\ge2$, then
\begin{align}
&\min\Bigl\{\alpha>0:
\langle\alpha\mathbf{m},\mathbf{y}\rangle
\ge\langle\mathbf{x},\mathbf{y}\rangle,
\quad
\forall\mathbf{x}\in\mathbb{B}(\mathbf{m},R),
\quad
\forall\mathbf{y}\in\mathcal{C}(\mathbf{x},c)
\Bigr\}
\notag\\
&\hspace{25mm}
=
\frac{c\bigl(\|\mathbf{m}\|_2^2-R^2\bigr)}
{\|\mathbf{m}\|_2\bigl(c\|\mathbf{m}\|_2-R\bigr)}.
\Label{MA}
\end{align}
If $d=1$, the exact minimum is
$1+R/\|\mathbf{m}\|_2$, and the right-hand side of \eqref{MA} remains a valid upper bound.
\end{theorem}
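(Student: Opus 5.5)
The plan is to reduce the robust condition to a pointwise geometric condition, compute the pointwise value $\alpha_{\mathrm{pt}}(\mathbf{x};\mathbf{m},c)$ in closed form, and then maximize it over the ball. Throughout write $M:=\|\mathbf{m}\|_2$, $c=\cos\theta$ with $\theta\in[0,\pi/2)$, and $s:=\sqrt{1-c^2}=\sin\theta$.

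\emph{Pointwise step.} For fixed nonzero $\mathbf{x}$, the requirement is $\alpha\mathbf{m}-\mathbf{x}\in\mathcal{C}(\mathbf{x},c)^*$. By the standard circular-cone duality, $\mathcal{C}(\mathbf{x},c)^*=\mathcal{C}(\mathbf{x},s)$, the cone of half-angle $\pi/2-\theta$ about $\mathbf{x}$. I would verify this directly by a two-dimensional argument in the plane spanned by $\mathbf{x}$ and $\mathbf{w}$; this is the place where $d\ge2$ is used to realize extremal $\mathbf{y}$. Thus the condition reads $\alpha\mathbf{m}\in\mathbf{x}+\mathcal{C}(\mathbf{x},s)$.

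Let $\varphi$ be the angle between $\mathbf{x}$ and $\mathbf{m}$, and assume $\varphi<\pi/2-\theta$; this holds on the ball because $R<cM$. The admissible $\alpha$ then form a closed ray $[\alpha_{\mathrm{pt}},\infty)$. The threshold is the point where the ray $\{\alpha\mathbf{m}\}$ meets the boundary of the shifted cone. The law of sines in the triangle with vertices $0$, $\mathbf{x}$, $\alpha\mathbf{m}$ has interior angles $\varphi$ at $0$, $\pi/2+\theta$ at $\mathbf{x}$, and $\pi/2-\theta-\varphi$ at $\alpha\mathbf{m}$. It gives
\[
\alpha_{\mathrm{pt}}(\mathbf{x};\mathbf{m},c)=\frac{c\,\|\mathbf{x}\|_2}{M\cos(\theta+\varphi)}.
\]
Since the admissible sets are closed rays, the robust value is $\alpha_{\mathrm{ball}}=\sup_{\mathbf{x}\in\mathbb{B}}\alpha_{\mathrm{pt}}$, and it is a minimum once the supremum is attained. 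The ball is compact, so attainment follows.

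\emph{Radial and angular reduction.} For fixed $\varphi\in[0,\varphi_{\max}]$ with $\sin\varphi_{\max}=R/M$, the value $\alpha_{\mathrm{pt}}$ increases with $\|\mathbf{x}\|_2$. So it suffices to take the far intersection of the ray at angle $\varphi$ with the sphere, whose length is
\[
\rho(\varphi)=M\cos\varphi+\sqrt{R^2-M^2\sin^2\varphi}.
\]
By rotational symmetry about $\mathbf{m}$, only $\varphi$ matters, which leaves the one-variable problem
\[
\max_{0\le\varphi\le\varphi_{\max}} g(\varphi),\qquad g(\varphi):=\frac{c\,\rho(\varphi)}{M\cos(\theta+\varphi)}.
\]

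\emph{Main obstacle.} This one-dimensional maximization is the step I expect to be hardest, since neither endpoint obviously wins. At the tangent angle $\varphi_{\max}$ one gets $c(M^2-R^2)/\bigl(M(c\sqrt{M^2-R^2}-sR)\bigr)$, which is not the target, so the maximizer should be interior.

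I would parametrize the boundary point as $\mathbf{x}=\mathbf{m}+R\mathbf{u}$ with $\mathbf{u}$ a unit vector in the plane, at angle $\psi$ from $\mathbf{m}$. I would then rewrite the condition as maximizing a ratio of affine-trigonometric expressions in $\psi$, namely $c\|\mathbf{x}\|^2$ over $M(c\|\mathbf{x}\|\cos\varphi-s\|\mathbf{x}\|\sin\varphi)$ expressed in $\psi$. Setting the derivative to zero should produce a condition equivalent to $\langle\mathbf{x},\mathbf{x}-\mathbf{m}\rangle$-type tangency between the shifted cone boundary and the sphere. At that point the value should collapse to
\[
\frac{c(M^2-R^2)}{M(cM-R)}.
\]
I would confirm this critical value against both endpoints: $\varphi=0$ gives $(M+R)/M$, and $\varphi=\varphi_{\max}$ gives the tangent value above. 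If the direct derivative is messy, the alternative is to characterize the minimal $\alpha$ as the smallest $\alpha$ for which the sphere $\partial\mathbb{B}(\mathbf{m},R)$ lies in the region $\{\mathbf{x}:\alpha\mathbf{m}\in\mathbf{x}+\mathcal{C}(\mathbf{x},s)\}$. That region is a planar quadratic region, and the tangency condition between it and the circle is a discriminant equation, solvable explicitly.

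\emph{Case $d=1$.} Only $\varphi=0$ occurs, and $\mathcal{C}(x,c)$ is the half-line of the sign of $x$. The supremum is therefore attained at $x=M+R$, giving $1+R/M$. To see that the formula \eqref{MA} remains an upper bound, note that
\[
\frac{c(M^2-R^2)}{M(cM-R)}\ge\frac{M+R}{M}
\quad\Longleftrightarrow\quad
c(M-R)\ge cM-R
\quad\Longleftrightarrow\quad
R(1-c)\ge0,
\]
which always holds.
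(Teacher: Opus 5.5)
Your reductions are correct and follow the paper's route, with one substitution:

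\begin{itemize}
\item $\mathcal{C}(\mathbf{x},c)^*=\mathcal{C}(\mathbf{x},s)$ is the paper's dual-cone lemma.
\item Your law-of-sines threshold $c\|\mathbf{x}\|_2/(M\cos(\theta+\varphi))$ is exactly the paper's $\alpha_+=zc/(cp-s\sqrt{1-p^2})$. The paper obtains it instead by factoring a quadratic in $\alpha$ and imposing a sign condition.
\item The radial step to the far endpoint, the reduction to one angle, and the $d=1$ argument coincide with the paper's.
\end{itemize}

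Your claim that the admissible $\alpha$ form a closed ray needs one line. The set $\{\alpha\ge0:\alpha\mathbf{m}-\mathbf{x}\in\mathcal{C}(\mathbf{x},s)\}$ is closed and convex and excludes $0$. It contains all large $\alpha$ because $\varphi<\pi/2-\theta$ puts $\mathbf{m}$ in the interior of $\mathcal{C}(\mathbf{x},s)$. The degenerate triangle at $\varphi=0$ must be checked directly.

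The genuine gap is the one-variable maximization, which is the only step that actually produces the formula. You assert that the critical value ``should collapse'' to $c(M^2-R^2)/(M(cM-R))$ but never show it. The heuristics around it are also partly wrong:
\begin{itemize}
\item The maximizer is not always interior. For $c=1$ it is $\varphi=0$, where $g(0)=1+R/M$ already equals the target.
\item The far intersection at $\varphi_{\max}$ has length $\sqrt{M^2-R^2}$. So $g(\varphi_{\max})=c\sqrt{M^2-R^2}/(c\sqrt{M^2-R^2}-sR)$, not the value you wrote.
\end{itemize}

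\textbf{How the paper closes the step.} It uses an auxiliary angle $\beta(\varphi)$ with $\sin\beta=M\sin\varphi/R$, for which $\frac{d}{d\varphi}\log g=\tan(\theta+\varphi)-\tan\beta(\varphi)$. Since $\beta'>1$, the difference $\beta-\varphi$ increases strictly from $0$ to $\pi/2-\varphi_{\max}>\theta$. Hence for $c<1$ there is a unique critical point, $\tan\varphi_*=qs/(1-qc)$ with $q:=R/M$, and there $g=c(1-q^2)/(c-q)$. For $c=1$ the derivative is negative on the whole interval.

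\textbf{A shorter route from your own parametrization.} This version is derivative-free.
\begin{itemize}
\item By your radial step it suffices to bound $\alpha_{\mathrm{pt}}$ on the sphere.
\item Scale to $M=1$ and fix a unit $\mathbf{v}\perp\mathbf{m}$; this is where $d\ge2$ enters. Write $\mathbf{x}=\mathbf{m}+q(\cos\psi\,\mathbf{m}+\sin\psi\,\mathbf{v})$ with $\psi\in[0,\pi]$.
\item Then $\|\mathbf{x}\|_2\cos\varphi=1+q\cos\psi$ and $\|\mathbf{x}\|_2\sin\varphi=q\sin\psi$, so
\[
\alpha_{\mathrm{pt}}(\mathbf{x})=\frac{c\,(1+q^2+2q\cos\psi)}{c+q\cos(\psi+\theta)},
\qquad
c+q\cos(\psi+\theta)\ge c-q>0 .
\]
\item Clearing the positive denominators, $\alpha_{\mathrm{pt}}(\mathbf{x})\le c(1-q^2)/(c-q)$ becomes $a\cos\psi+b\sin\psi\le K$. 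Here $a:=c(1+q^2)-2q$, $b:=s(1-q^2)\ge0$, and $K:=1+q^2-2cq\ge(1-q)^2>0$.
\item One checks that $a^2+b^2=K^2$ holds identically. Cauchy--Schwarz then gives the bound.
\item Equality holds at $(\cos\psi,\sin\psi)=(a,b)/K$, which lies in $[0,\pi]$ because $b\ge0$. So the maximum is attained.
\end{itemize}
This treats $c<1$ and $c=1$ uniformly. The equality point also reproduces the paper's critical angle $\tan\varphi_*=qs/(1-qc)$.
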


Our second target is the sufficient box theorem used in the main manuscript. Its proof will use the first theorem together with the box-containment lemma established at the end of this appendix.

\begin{theorem}[Sufficient scaling for a coordinate box]
\Label{thm:alpha}
Let $\mathbf{l},\mathbf{h}\in\mathbb{R}^d$ satisfy
$l_i\le h_i$ for every $i$, and define
$\mathbf{m}:=(\mathbf{l}+\mathbf{h})/2$ and
$\mathbf{r}:=(\mathbf{h}-\mathbf{l})/2$.
Let $c\in(0,1]$ and suppose that
$\|\mathbf{r}\|_2<c\|\mathbf{m}\|_2$.
Set
\[
\alpha
:=
\frac{c\bigl(\|\mathbf{m}\|_2^2-\|\mathbf{r}\|_2^2\bigr)}
{\|\mathbf{m}\|_2
\bigl(c\|\mathbf{m}\|_2-\|\mathbf{r}\|_2\bigr)}.
\]
Then every $\boldsymbol{\xi}\in[\mathbf{l},\mathbf{h}]$ satisfies
\begin{equation}
\langle\alpha\mathbf{m},\mathbf{y}\rangle
\ge
\langle\boldsymbol{\xi},\mathbf{y}\rangle
\qquad
\text{for every }
\mathbf{y}\in\mathcal{C}(\boldsymbol{\xi},c).
\Label{eq:alpha2}
\end{equation}
For $d\ge2$, the displayed value is the exact minimum for the containing ball
$\mathbb{B}(\mathbf{m},\|\mathbf{r}\|_2)$.
For $d=1$, the displayed value is feasible for the containing ball and
therefore sufficient for the box.  The finite-difference construction uses
this single sufficient formula in every dimension; only sufficiency is needed there.
\end{theorem}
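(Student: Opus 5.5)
The plan is to deduce the box theorem from Theorem~\ref{thm:alpha-B}, using only the fact that the coordinate box sits inside the Euclidean ball with the same center and radius $R:=\|\mathbf{r}\|_2$.

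\textbf{Containment.} First I would show $[\mathbf{l},\mathbf{h}]\subseteq\mathbb{B}(\mathbf{m},\|\mathbf{r}\|_2)$. For $\boldsymbol{\xi}\in[\mathbf{l},\mathbf{h}]$ we have $|\xi_i-m_i|\le r_i$ coordinatewise. Summing the squares gives $\|\boldsymbol{\xi}-\mathbf{m}\|_2\le\|\mathbf{r}\|_2$.

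\textbf{The nondegenerate case $R>0$.} The hypothesis $\|\mathbf{r}\|_2<c\|\mathbf{m}\|_2$ forces $\mathbf{m}\ne0$, so Theorem~\ref{thm:alpha-B} applies with $R=\|\mathbf{r}\|_2$. For $d\ge2$, its displayed value is exactly the $\alpha$ of the present statement. By definition of that minimum, the dominance inequality holds for every $\mathbf{x}\in\mathbb{B}(\mathbf{m},R)$ and every $\mathbf{y}\in\mathcal{C}(\mathbf{x},c)$. Restricting to $\mathbf{x}=\boldsymbol{\xi}$ in the box gives \eqref{eq:alpha2}. The ball excludes the origin, so each $\boldsymbol{\xi}\ne0$ and every cone $\mathcal{C}(\boldsymbol{\xi},c)$ is well defined. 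For $d=1$, Theorem~\ref{thm:alpha-B} states that the same formula is a valid upper bound on the exact minimum $1+R/|m|$. The remaining point is that the feasible set of $\alpha$ is upward closed, so any $\alpha$ at least the minimum is feasible. To see this, write $\alpha\mathbf{m}-\mathbf{x}=(\alpha-\alpha_0)\mathbf{m}+(\alpha_0\mathbf{m}-\mathbf{x})$. Then check $\langle\mathbf{m},\mathbf{y}\rangle\ge0$ for $\mathbf{y}\in\mathcal{C}(\mathbf{x},c)$ with $\mathbf{x}$ in the ball; this holds because $\mathbf{m}$ itself is a feasible $\mathbf{x}$ with $\alpha_0\ge1$. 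The sentence about exactness for $d\ge2$ is then just a restatement of Theorem~\ref{thm:alpha-B}.

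\textbf{The degenerate case $R=0$.} Theorem~\ref{thm:alpha-B} assumes $R>0$, so this case is handled directly. Here the box is the single point $\mathbf{m}$ and $\alpha=1$, so the inequality reads $\langle\mathbf{m},\mathbf{y}\rangle\ge\langle\mathbf{m},\mathbf{y}\rangle$, which holds trivially.

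\textbf{Expected obstacle.} The genuinely hard content is Theorem~\ref{thm:alpha-B} itself, which I take as given. Its proof would use the pointwise dual-cone value $\alpha_{\mathrm{pt}}$, a radial and angular optimization over the ball, and a one-dimensional reduction. For the box theorem, the only delicate steps are the bookkeeping just described: the $R=0$ edge case, and the upward-closedness needed to pass from "upper bound on the minimum" to "feasible" in the $d=1$ case.
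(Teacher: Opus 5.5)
Your route is the paper's: containment of the box in $\mathbb{B}(\mathbf{m},\|\mathbf{r}\|_2)$, then Theorem~\ref{thm:alpha-B} for $d\ge2$, a separate argument for $d=1$, and a direct treatment of $R=0$. The one difference is in $d=1$. The paper cites Lemma~\ref{lem:S-one-dimensional}, which identifies the feasible set as the closed half-line $[1+R/\|\mathbf{m}\|_2,\infty)$, so upward closedness is already built in. You derive upward closedness yourself instead, which is fine in principle.

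Two small repairs are needed.

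First, your stated reason for $\langle\mathbf{m},\mathbf{y}\rangle\ge0$ does not work. Feasibility at the single point $\mathbf{x}=\mathbf{m}$, which is what gives $\alpha_0\ge1$, says nothing about vectors $\mathbf{y}$ lying in the cones of other points $\mathbf{x}$ of the ball. The correct one-line argument uses feasibility of $\alpha_0$ at the same $\mathbf{x}$: $\alpha_0\langle\mathbf{m},\mathbf{y}\rangle\ge\langle\mathbf{x},\mathbf{y}\rangle\ge c\|\mathbf{x}\|_2\|\mathbf{y}\|_2\ge0$. Since $\alpha_0>0$, this gives $\langle\mathbf{m},\mathbf{y}\rangle\ge0$.

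Second, when $R=0$ the exactness sentence for $d\ge2$ is not a restatement of Theorem~\ref{thm:alpha-B}, because that theorem assumes $R>0$. You showed only feasibility of $\alpha=1$ in this case. You must also add the lower bound, as the paper does. Taking $\mathbf{y}=\mathbf{m}\in\mathcal{C}(\mathbf{m},c)$ forces $\alpha\|\mathbf{m}\|_2^2\ge\|\mathbf{m}\|_2^2$, i.e. $\alpha\ge1$. Hence $1$ is the exact minimum for the radius-zero containing ball.
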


\section{Proof of the exact ball theorem}
\Label{S-proof-ball}
\subsection{Normalization of the ball problem} \Label{S-normalization}
We prove Theorem~\ref{thm:alpha-B} by separating five logically distinct
parts of the argument.  We first normalize the scale.  We then determine, for each fixed unit direction $\mathbf{u}$, all positive
radial coordinates $z$ for which
$z\mathbf{u}\in\mathbb{B}(\mathbf{m},\rho)$, and solve the dominance
problem at one fixed point, including the complete pointwise feasible set.  Only after that
pointwise analysis do we pass from pointwise thresholds to the robust value.
The remaining maximization is split into radial and angular parts.  Finally,
the one-dimensional case is treated separately.

\begin{lemma}[Scale normalization]
\Label{lem:S-normalization}
Let $M:=\|\mathbf{m}\|_2$, $\widehat{\mathbf{m}}:=\mathbf{m}/M$, and
$\rho:=R/M$.  Then
\begin{equation}
\alpha_{\mathrm{ball}}(\mathbf{m},R,c)
=
\alpha_{\mathrm{ball}}(\widehat{\mathbf{m}},\rho,c).
\Label{eq:normalization}
\end{equation}
Moreover, $0<\rho<c$, $\|\widehat{\mathbf{m}}\|_2=1$, and
$\mathbb{B}(\widehat{\mathbf{m}},\rho)$ does not contain the origin.
\end{lemma}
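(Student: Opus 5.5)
The plan is a direct change of variables $\mathbf{x}=M\mathbf{x}'$, with $M=\|\mathbf{m}\|_2>0$. This map is a bijection that carries $\mathbb{B}(\widehat{\mathbf{m}},\rho)$ onto $\mathbb{B}(\mathbf{m},R)$. Indeed, $\|M\mathbf{x}'-\mathbf{m}\|_2=M\|\mathbf{x}'-\widehat{\mathbf{m}}\|_2$, so $\|\mathbf{x}'-\widehat{\mathbf{m}}\|_2\le\rho$ holds if and only if $\|M\mathbf{x}'-\mathbf{m}\|_2\le R$.

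Next I will check that the cones are unchanged under this map. By the positive-scaling identity \eqref{eq:cone-positive-scaling}, $\mathcal{C}(M\mathbf{x}',c)=\mathcal{C}(\mathbf{x}',c)$. The set of directions $\mathbf{y}$ is therefore the same before and after rescaling.

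Then I will compare the two feasibility conditions for a fixed $\alpha>0$. For corresponding $\mathbf{x}=M\mathbf{x}'$ and any $\mathbf{y}$ in the common cone,
\[
\langle\alpha\mathbf{m}-\mathbf{x},\mathbf{y}\rangle=M\langle\alpha\widehat{\mathbf{m}}-\mathbf{x}',\mathbf{y}\rangle .
\]
Since $M>0$, the left side is nonnegative exactly when the right side is. So $\alpha$ is feasible in \eqref{eq:ball-alpha} for $(\mathbf{m},R,c)$ if and only if it is feasible for $(\widehat{\mathbf{m}},\rho,c)$. The two feasible sets coincide, and hence so do their infima, which gives \eqref{eq:normalization}. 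Here I should also confirm that the normalized instance satisfies the standing hypothesis $0<\rho<c\|\widehat{\mathbf{m}}\|_2$, so that $\alpha_{\mathrm{ball}}(\widehat{\mathbf{m}},\rho,c)$ is defined. This follows from the next step.

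The remaining claims are immediate. First, $\|\widehat{\mathbf{m}}\|_2=1$ by definition. Second, $\rho=R/M$ lies in $(0,c)$ because $0<R<cM$. Third, the origin is not in the ball: $\|0-\widehat{\mathbf{m}}\|_2=1>c\ge\rho$, using $c\le1$.

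No step is a genuine obstacle. The only point needing care is invoking cone scale-invariance so that the $\mathbf{y}$-ranges match exactly.
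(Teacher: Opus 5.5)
Your proposal is correct and is essentially the paper's proof: the rescaling $\mathbf{x}=M\mathbf{x}'$ is a bijection between the two balls, the cones agree by \eqref{eq:cone-positive-scaling}, and the factor $M>0$ in $\langle\alpha\mathbf{m}-\mathbf{x},\mathbf{y}\rangle=M\langle\alpha\widehat{\mathbf{m}}-\mathbf{x}',\mathbf{y}\rangle$ makes the two feasible sets identical. One small slip: your chain $1>c\ge\rho$ is false when $c=1$, which is allowed. It should read $1\ge c>\rho$; this still gives $\rho<1$, so the origin is excluded as you claim.
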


\begin{proof}
Since $\mathbf{m}\ne0$, we have $M>0$.  The map
$\mathbf{u}\mapsto\mathbf{x}:=\mathbf{u}/M$ is a bijection from
$\mathbb{B}(\mathbf{m},R)$ onto
$\mathbb{B}(\widehat{\mathbf{m}},\rho)$.  For corresponding points,
\eqref{eq:cone-positive-scaling} gives
\(
\mathcal{C}(\boldsymbol{\xi},c)=\mathcal{C}(\mathbf{x},c).
\)
Moreover, the relation
\(
\langle\alpha\mathbf{m}-\boldsymbol{\xi},\mathbf{y}\rangle
=M\langle\alpha\widehat{\mathbf{m}}-
\mathbf{x},\mathbf{y}\rangle
\)
holds for every $\mathbf{y}\in\mathbb{R}^d$.
Because $M>0$, a value $\alpha>0$ is feasible for the original robust
problem if and only if it is feasible for the normalized robust problem.
The two feasible sets are therefore identical, and taking their infima proves
\eqref{eq:normalization}.

The assumption $0<R<cM$ gives $0<\rho<c\le1$.  Also,
$\|\widehat{\mathbf{m}}\|_2=1$ and $\rho<1$, so the distance from the center
of the normalized ball to the origin is larger than its radius.  Hence the
normalized ball does not contain the origin.
\qed
\end{proof}

For the remainder of the analysis in dimensions $d\ge2$, we work entirely
in the normalized variables supplied by Lemma~\ref{lem:S-normalization}.
To avoid carrying hats through the geometric argument, we henceforth write
$\mathbf{m}$ for the normalized center $\widehat{\mathbf{m}}$.  A point of
the normalized ball is denoted by $\mathbf{x}$; the corresponding original
point, when it must be mentioned, is denoted by $\boldsymbol{\xi}$.
Thus
\begin{equation}
\|\mathbf{m}\|_2=1,
\qquad
0<\rho<c\le1,
\qquad
s:=\sqrt{1-c^2}.
\Label{eq:normalized-common-parameters}
\end{equation}
For every unit vector $\mathbf{u}$, define
\begin{equation}
p(\mathbf{u}):=\langle\mathbf{m},\mathbf{u}\rangle.
\Label{eq:p-definition}
\end{equation}
Quantities whose definitions require additional
conditions, including $z_\pm(p)$ and $\alpha_\pm$, will be introduced only
after those conditions have been established.

\subsection{Intersection of fixed rays with the normalized ball} \Label{S-fixed-ray-intersection}

We first describe the intersection of the normalized ball with a fixed
ray from the origin. 
The following lemma characterizes both the directions whose rays
intersect the ball and the admissible radial coordinates along each
such direction.

\begin{lemma}[Intersection of a fixed ray with the normalized ball]
\Label{lem:S-ray-ball-intersection}
Let $\|\mathbf{m}\|_2=1$, $0<\rho<1$, and let
$\mathbf{u}\in\mathbb{R}^d$ be a unit vector. 
Given
\(p:=p(\mathbf{u}) \),
there exists $z>0$ such that
$z\mathbf{u}\in\mathbb{B}(\mathbf{m},\rho)$ if and only if
\begin{equation}
\sqrt{1-\rho^2}\le p\le1.
\Label{eq:p-range}
\end{equation}
Whenever \eqref{eq:p-range} holds, define
\[
z_-(p)
:=
p-\sqrt{p^2-(1-\rho^2)},
\qquad
z_+(p)
:=
p+\sqrt{p^2-(1-\rho^2)}.
\]
Then both $z_-(p)$ and $z_+(p)$ are strictly positive, and, for every
$z>0$,
\begin{equation}
z\mathbf{u}\in\mathbb{B}(\mathbf{m},\rho)
\quad\Longleftrightarrow\quad
z_-(p)\le z\le z_+(p).
\Label{eq:radial-interval}
\end{equation}
\end{lemma}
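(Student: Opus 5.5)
The plan is to reduce the membership condition to a scalar quadratic inequality in $z$. For $z>0$ and a unit vector $\mathbf{u}$, I will expand $\|z\mathbf{u}-\mathbf{m}\|_2^2$ using $\|\mathbf{m}\|_2=1$, $\|\mathbf{u}\|_2=1$ and $p=\langle\mathbf{m},\mathbf{u}\rangle$:
\[
\|z\mathbf{u}-\mathbf{m}\|_2^2 = z^2-2pz+1 .
\]
Hence $z\mathbf{u}\in\mathbb{B}(\mathbf{m},\rho)$ if and only if $q(z):=z^2-2pz+(1-\rho^2)\le0$. Everything else is an elementary analysis of this monic quadratic.

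\textbf{Existence.} A real $z$ with $q(z)\le0$ exists if and only if the discriminant is nonnegative, that is, $p^2\ge 1-\rho^2$. Since $1-\rho^2>0$, this gives $|p|\ge\sqrt{1-\rho^2}$. In that case the solution set is exactly the closed interval $[z_-(p),z_+(p)]$. The upper bound $p\le1$ is automatic from Cauchy--Schwarz.

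\textbf{Positivity of the roots.} The product of the roots is $z_-z_+=1-\rho^2>0$, so both roots have the same sign. Their sum is $2p$, so that common sign is the sign of $p$.
\begin{itemize}
\item[$\circ$] If $p\ge\sqrt{1-\rho^2}>0$, both roots are strictly positive. Then the set of positive $z$ with $q(z)\le0$ is the whole interval $[z_-(p),z_+(p)]$, which proves \eqref{eq:radial-interval} and the "if" direction.
\item[$\circ$] If $p\le-\sqrt{1-\rho^2}$, both roots are negative. Then no $z>0$ satisfies $q(z)\le0$; indeed $q(z)>0$ directly, since every term is positive when $p<0$.
\item[$\circ$] If $|p|<\sqrt{1-\rho^2}$, there is no real solution at all.
\end{itemize}
Together these cases give the "only if" direction, so \eqref{eq:p-range} characterizes the directions whose rays meet the ball.

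\textbf{Main obstacle.} The only delicate point is excluding the branch $p\le-\sqrt{1-\rho^2}$. I will handle it with the sign argument from the sum and product of the roots, or alternatively by noting that $q(z)\ge z^2+1-\rho^2>0$ whenever $p\le 0$ and $z>0$. The rest is routine.
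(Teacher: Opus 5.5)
Your proposal is correct and follows essentially the same route as the paper: you reduce membership to the quadratic inequality $z^2-2pz+1-\rho^2\le 0$, then use the discriminant and the sum and product of the roots for positivity, and Cauchy--Schwarz for $p\le 1$. The only cosmetic difference is how the ``only if'' direction is handled: the paper excludes $p\le 0$ directly via $2pz\ge z^2+1-\rho^2>0$ rather than by your three-way case split on $p$, and that inequality is exactly the alternative you already mention.
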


\begin{proof}
For $z>0$, the condition
$z\mathbf{u}\in\mathbb{B}(\mathbf{m},\rho)$ is equivalent to
\(
\|z\mathbf{u}-\mathbf{m}\|_2^2\le\rho^2.
\)
Since $\|\mathbf{m}\|_2=\|\mathbf{u}\|_2=1$ and
$p=\langle\mathbf{m},\mathbf{u}\rangle$, this condition becomes
\begin{equation}
z^2-2pz+1-\rho^2\le0.
\Label{eq:ray-quadratic}
\end{equation}
The corresponding quadratic equation
\begin{equation}
z^2-2pz+1-\rho^2=0
\Label{eq:ray-boundary-equation}
\end{equation}
has real roots if and only if
\[
p^2\ge1-\rho^2.
\]

Suppose first that a positive $z$ satisfies
\eqref{eq:ray-quadratic}. Then, we have
\(
2pz
\ge
z^2+1-\rho^2
>
0,
\)
so $p>0$. Combining this inequality with
$p^2\ge1-\rho^2$ gives
\(
p\ge\sqrt{1-\rho^2}.
\)
Moreover, $p\le1$ follows from the Cauchy--Schwarz inequality. This
proves the necessity of \eqref{eq:p-range}.

Conversely, suppose that \eqref{eq:p-range} holds. The roots of
\eqref{eq:ray-boundary-equation} are $z_-(p)$ and $z_+(p)$ as defined
above. Their sum and product satisfy
\[
z_-(p)+z_+(p)=2p>0,
\qquad
z_-(p)z_+(p)=1-\rho^2>0.
\]
Hence both roots are strictly positive. Since the leading coefficient
in \eqref{eq:ray-quadratic} is positive, the quadratic expression is
nonpositive exactly between its two roots. Therefore,
\eqref{eq:ray-quadratic} holds exactly when
\(
z_-(p)\le z\le z_+(p).
\)
This proves \eqref{eq:radial-interval} and completes the proof.
\qed
\end{proof}

\subsection{Pointwise feasibility at a fixed point} \Label{S-pointwise-feasibility}

We now turn to the pointwise dominance problem at a fixed point
$\mathbf{x}\in\mathbb{B}(\mathbf{m},\rho)$.  Since the normalized ball
does not contain the origin, define
\[
z:=\|\mathbf{x}\|_2>0,
\qquad
\mathbf{u}:=\frac{\mathbf{x}}{\|\mathbf{x}\|_2},
\qquad
\mathbf{x}=z\mathbf{u},
\qquad
\|\mathbf{u}\|_2=1.
\]
By \eqref{eq:cone-positive-scaling},
\begin{equation}
\mathcal{C}(\mathbf{x},c)=\mathcal{C}(\mathbf{u},c).
\Label{eq:cone-x-u}
\end{equation}
The pointwise value in \eqref{eq:pointwise-alpha} is determined by the
condition that $\alpha\mathbf{m}-\mathbf{x}$ have a nonnegative inner
product with every vector in $\mathcal{C}(\mathbf{x},c)$.  By
\eqref{eq:cone-x-u}, the pointwise problem can be expressed as a membership
condition in the dual cone $\mathcal{C}(\mathbf{u},c)^*$.  We first identify
this dual cone.
\begin{lemma}[Dual of the circular cone]
\Label{lem:S-dual-circular-cone}
Let $d\ge2$, let $\mathbf{u}\in\mathbb{R}^d$ be a unit vector, and let
$c\in(0,1]$.  Set $s:=\sqrt{1-c^2}$.  Then, we have
\begin{equation}
\mathcal{C}(\mathbf{u},c)^*
=
\left\{
\mathbf{w}\in\mathbb{R}^d:
\langle\mathbf{u},\mathbf{w}\rangle
\ge s\|\mathbf{w}\|_2
\right\}.
\Label{eq:dual-circular-cone}
\end{equation}
\end{lemma}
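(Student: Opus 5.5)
The plan is a two-inclusion argument based on an orthogonal decomposition along $\mathbf{u}$. Write any vector as $\mathbf{y}=a\mathbf{u}+\mathbf{y}_\perp$ with $\langle\mathbf{u},\mathbf{y}_\perp\rangle=0$. Then $\mathbf{y}\in\mathcal{C}(\mathbf{u},c)$ if and only if $a\ge c\sqrt{a^2+\|\mathbf{y}_\perp\|_2^2}$. This is equivalent to $a\ge0$ together with $\|\mathbf{y}_\perp\|_2\le (s/c)\,a$ when $c<1$; when $c=1$ the cone is the ray $\{a\mathbf{u}:a\ge0\}$. Denote the right-hand side of \eqref{eq:dual-circular-cone} by $K_s$. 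The goal is to show $\mathcal{C}(\mathbf{u},c)^*=K_s$.

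\textbf{Step 1: $K_s\subseteq\mathcal{C}(\mathbf{u},c)^*$.} Take $\mathbf{w}\in K_s$ and $\mathbf{y}\in\mathcal{C}(\mathbf{u},c)$ with $\mathbf{y}\ne0$. Let $\phi\in[0,\pi]$ be the angle between $\mathbf{u}$ and $\mathbf{w}$ (if $\mathbf{w}=0$ there is nothing to prove), and $\psi$ the angle between $\mathbf{u}$ and $\mathbf{y}$. The hypotheses read $\cos\psi\ge c$ and $\cos\phi\ge s$. Hence $\psi\le\arccos c$ and $\phi\le\arccos s=\pi/2-\arccos c$, using $s=\sqrt{1-c^2}$. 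By the triangle inequality for angles on the sphere, the angle between $\mathbf{w}$ and $\mathbf{y}$ is at most $\phi+\psi\le\pi/2$, so $\langle\mathbf{w},\mathbf{y}\rangle\ge0$.

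To avoid spherical geometry, one can do this algebraically instead. Write $\mathbf{w}=b\mathbf{u}+\mathbf{w}_\perp$. Cauchy--Schwarz gives
\[
\langle\mathbf{w},\mathbf{y}\rangle\ge ab-\|\mathbf{w}_\perp\|_2\|\mathbf{y}_\perp\|_2 .
\]
Substitute $a=\|\mathbf{y}\|_2\cos\psi$, $\|\mathbf{y}_\perp\|_2=\|\mathbf{y}\|_2\sin\psi$, and similarly for $\mathbf{w}$. The bound becomes $\|\mathbf{w}\|_2\|\mathbf{y}\|_2\cos(\phi+\psi)\ge0$, because $\phi+\psi\le\pi/2$.

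\textbf{Step 2: $\mathcal{C}(\mathbf{u},c)^*\subseteq K_s$.} This is where $d\ge2$ is needed. Take $\mathbf{w}$ in the dual cone with $\mathbf{w}\ne0$, and decompose $\mathbf{w}=b\mathbf{u}+\mathbf{w}_\perp$.
\begin{itemize}
\item If $\mathbf{w}_\perp\ne0$, test against the boundary ray
\[
\mathbf{y}=c\,\mathbf{u}-s\,\mathbf{w}_\perp/\|\mathbf{w}_\perp\|_2 ,
\]
which lies in $\mathcal{C}(\mathbf{u},c)$ since it is a unit vector with $\langle\mathbf{u},\mathbf{y}\rangle=c$. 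This gives $cb-s\|\mathbf{w}_\perp\|_2\ge0$. Combined with $b\ge0$ (test against $\mathbf{y}=\mathbf{u}$), this is equivalent to $b\ge s\sqrt{b^2+\|\mathbf{w}_\perp\|_2^2}$: square both sides and use $c^2+s^2=1$.
\item If $\mathbf{w}_\perp=0$, then $\mathbf{w}=b\mathbf{u}$ with $b\ge0$, and $b\ge s|b|$ holds trivially.
\end{itemize}

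\textbf{Main obstacle.} The only delicate points are the degenerate cases. When $c=1$ (so $s=0$) the cone is a ray and its dual is the half-space $\langle\mathbf{u},\mathbf{w}\rangle\ge0$, which matches $K_0$. The general-position test vector above requires $\mathbf{w}_\perp\ne0$, which is handled by the case split. Dimension $d\ge2$ is what guarantees the boundary rays exist in every perpendicular direction. This is the step I would check most carefully, since in $d=1$ the dual cone is a half-line regardless of $c$, which explains the separate one-dimensional treatment in the paper.
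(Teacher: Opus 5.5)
Your proof is correct and is essentially the paper's argument: the same orthogonal decomposition along $\mathbf{u}$, Cauchy--Schwarz on the perpendicular parts, the test vectors $\mathbf{u}$ and $c\mathbf{u}-s\,\mathbf{w}_\perp/\|\mathbf{w}_\perp\|_2$ (the paper's minimizer at $q=c$), and the squaring step using $c^2+s^2=1$. The only difference is packaging: the paper gets both inclusions at once by computing the exact minimum of $\langle\mathbf{w},\mathbf{v}\rangle$ over unit $\mathbf{v}$ in the cone, while you split into two inclusions and use $\cos(\phi+\psi)\ge0$ for sufficiency. One minor correction to your closing remark: your argument never uses $d\ge2$, because a nonzero $\mathbf{w}_\perp$ already supplies the perpendicular direction. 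In fact the identity also holds for $d=1$, where both sides equal $\{w:uw\ge0\}$, so the paper's separate one-dimensional treatment comes from the later angular attainment step, not from this lemma.
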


\begin{proof}
For $\mathbf{w}\in\mathbb{R}^d$, define
\(
a:=\langle\mathbf{u},\mathbf{w}\rangle\) and 
\(\mathbf{w}_{\perp}:=\mathbf{w}-a\mathbf{u}.
\)
Then, we have
\[
\mathbf{w}=a\mathbf{u}+\mathbf{w}_{\perp},
\qquad
\langle\mathbf{u},\mathbf{w}_{\perp}\rangle=0,
\qquad
\|\mathbf{w}\|_2^2=a^2+\|\mathbf{w}_{\perp}\|_2^2.
\]
We first express membership in $\mathcal{C}(\mathbf{u},c)^*$ in terms of
$a$ and $\mathbf{w}_{\perp}$.  Since $\mathcal{C}(\mathbf{u},c)$ is
homogeneous, it is enough to test the inner product against unit vectors in
the cone.  Such a unit vector can be written as
\[
\mathbf{v}=q\mathbf{u}+\mathbf{v}_{\perp},
\qquad
q\in[c,1],
\qquad
\langle\mathbf{u},\mathbf{v}_{\perp}\rangle=0,
\qquad
\|\mathbf{v}_{\perp}\|_2=\sqrt{1-q^2}.
\]
Taking $\mathbf{v}=\mathbf{u}$ shows that
$\mathbf{w}\in\mathcal{C}(\mathbf{u},c)^*$ requires $a\ge0$.
For fixed $q$, the Cauchy--Schwarz inequality gives
\[
\langle\mathbf{w},\mathbf{v}\rangle
=aq+\langle\mathbf{w}_{\perp},\mathbf{v}_{\perp}\rangle
\ge aq-\|\mathbf{w}_{\perp}\|_2\sqrt{1-q^2}.
\]
Because $d\ge2$, equality can be attained by choosing
$\mathbf{v}_{\perp}$ in the direction opposite to
$\mathbf{w}_{\perp}$, with the case $\mathbf{w}_{\perp}=0$ being
immediate.  When $a\ge0$, the right-hand side is nondecreasing in
$q\in[c,1]$.  Its minimum is therefore
$ac-s\|\mathbf{w}_{\perp}\|_2$.  Hence, the following 
equivalence condition holds:
\begin{equation}
\mathbf{w}\in\mathcal{C}(\mathbf{u},c)^*
\quad\Longleftrightarrow\quad
a\ge0
\quad\text{and}\quad
ac\ge s\|\mathbf{w}_{\perp}\|_2.
\Label{eq:dual-two-conditions}
\end{equation}

We now show that the two conditions on the right-hand side of
\eqref{eq:dual-two-conditions} are equivalent to the inequality on the
right-hand side of \eqref{eq:dual-circular-cone}.  Suppose first that
$a\ge0$ and $ac\ge s\|\mathbf{w}_{\perp}\|_2$.  Squaring the second
inequality and using $c^2=1-s^2$ gives
\[
a^2
\ge
s^2\bigl(a^2+\|\mathbf{w}_{\perp}\|_2^2\bigr)
=
s^2\|\mathbf{w}\|_2^2.
\]
Since $a\ge0$, we obtain $a\ge s\|\mathbf{w}\|_2$.

Conversely, suppose that $a\ge s\|\mathbf{w}\|_2$.  Then $a\ge0$, and
squaring gives
\(
c^2a^2\ge s^2\|\mathbf{w}_{\perp}\|_2^2.
\)
All quantities are nonnegative, so
$ac\ge s\|\mathbf{w}_{\perp}\|_2$.  Thus the two conditions in
\eqref{eq:dual-two-conditions} hold.  This proves
\eqref{eq:dual-circular-cone}.
\qed
\end{proof}

We now return to the pointwise dominance problem for the fixed point
$\mathbf{x}=z\mathbf{u}$ introduced above, and retain the notation
$p=p(\mathbf{u})$.  The algebraic boundary equation that arises in the
analysis below is
\begin{equation}
0=
\bigl((cp-s\sqrt{1-p^2})\alpha-zc\bigr)
\bigl((cp+s\sqrt{1-p^2})\alpha-zc\bigr).
\Label{eq:pointwise-boundary-equation}
\end{equation}
Its two formal solutions are
\begin{equation}
\alpha_-:=
\frac{zc}{cp+s\sqrt{1-p^2}},
\qquad
\alpha_+:=
\frac{zc}{cp-s\sqrt{1-p^2}}.
\Label{eq:pointwise-two-roots}
\end{equation}
The following lemma proves that the two denominators in
\eqref{eq:pointwise-two-roots} are strictly positive and that
$\alpha_+$ is the pointwise dominance threshold.
\begin{lemma}[Pointwise dominance threshold]
\Label{lem:S-pointwise-scaling}
Under the normalized assumptions
\eqref{eq:normalized-common-parameters} and the notation above, the
quantities $\alpha_-$ and $\alpha_+$ in
\eqref{eq:pointwise-two-roots} are well defined and satisfy
$0<\alpha_-\le\alpha_+$.  Moreover, a positive number $\alpha$
satisfies
\begin{equation}
\langle\alpha\mathbf{m}-\mathbf{x},\mathbf{y}\rangle\ge0
\qquad
\text{for every }\mathbf{y}\in\mathcal{C}(\mathbf{x},c)
\Label{eq:pointwise-dominance-condition}
\end{equation}
if and only if
\begin{equation}
\alpha\ge\alpha_+.
\Label{eq:pointwise-feasibility-characterization}
\end{equation}
\end{lemma}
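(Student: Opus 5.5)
The plan is to reduce the pointwise condition to the dual-cone membership supplied by Lemma~\ref{lem:S-dual-circular-cone}. By \eqref{eq:cone-x-u}, condition \eqref{eq:pointwise-dominance-condition} holds if and only if $\mathbf{w}:=\alpha\mathbf{m}-z\mathbf{u}\in\mathcal{C}(\mathbf{u},c)^*$. By \eqref{eq:dual-circular-cone} this is equivalent to
\[
\langle\mathbf{u},\alpha\mathbf{m}-z\mathbf{u}\rangle\ge s\|\alpha\mathbf{m}-z\mathbf{u}\|_2,
\quad\text{i.e.}\quad
\alpha p-z\ge s\sqrt{\alpha^2-2\alpha zp+z^2},
\]
using $\|\mathbf{m}\|_2=\|\mathbf{u}\|_2=1$.

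First, I would verify that the denominators are positive. Since $\mathbf{x}\in\mathbb{B}(\mathbf{m},\rho)$, Lemma~\ref{lem:S-ray-ball-intersection} gives $p\ge\sqrt{1-\rho^2}>\sqrt{1-c^2}=s$, because $\rho<c$. Hence $\sqrt{1-p^2}<c$. Then $cp-s\sqrt{1-p^2}>cs-sc=0$. More cleanly, writing $p=\cos\phi$ and $c=\cos\gamma$ with $\phi<\gamma\le\pi/2$, we have $cp-s\sqrt{1-p^2}=\cos(\phi+\gamma)$. The condition $\phi<\pi/2-\gamma$ comes from $p>s=\sin\gamma$, so $\cos(\phi+\gamma)>0$. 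The other denominator, $cp+s\sqrt{1-p^2}$, exceeds it and is therefore positive. This gives $0<\alpha_-\le\alpha_+$.

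Next comes the characterization. Squaring the inequality is legitimate exactly when $\alpha p-z\ge0$, so the condition becomes: $\alpha p\ge z$ and $(\alpha p-z)^2-s^2(\alpha^2-2\alpha zp+z^2)\ge0$. Expanding with $1-s^2=c^2$, the quadratic in $\alpha$ is
\[
(p^2-s^2)\alpha^2-2zpc^2\alpha+c^2z^2 .
\]
I would check that this equals the product in \eqref{eq:pointwise-boundary-equation}, using $c^2p^2-s^2(1-p^2)=p^2-s^2$. Its leading coefficient $p^2-s^2$ is positive, so it is nonnegative exactly when $\alpha\le\alpha_-$ or $\alpha\ge\alpha_+$. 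The main obstacle is discarding the branch $\alpha\le\alpha_-$ by means of the sign condition $\alpha p\ge z$. For this I would show $\alpha_-<z/p$, which is equivalent to $cp<cp+s\sqrt{1-p^2}$. That inequality holds strictly when $p<1$; when $p=1$, $\alpha_-=\alpha_+=z$ and there is no issue. I would also check $\alpha_+\ge z/p$, which is equivalent to $cp\ge cp-s\sqrt{1-p^2}$ and is true.

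Combining these steps, the feasible $\alpha$ are those with $\alpha\ge z/p$ that satisfy the quadratic inequality. Because $\alpha_-<z/p\le\alpha_+$ in the case $p<1$, this set is exactly $\alpha\ge\alpha_+$, which is \eqref{eq:pointwise-feasibility-characterization}.
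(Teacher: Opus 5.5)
Your proof is correct and follows the paper's argument step for step. You reduce to membership of $\alpha\mathbf{m}-z\mathbf{u}$ in the dual circular cone, get positive denominators from $p>s$, split the second-order-cone inequality into the sign condition $\alpha p\ge z$ plus the factored quadratic, and discard the lower branch by comparing $\alpha_\pm$ with $z/p$; that last step is the paper's evaluation of $\alpha_\pm p-z$ in another form.

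Two small slips should be fixed. First, $\alpha_-<z/p$ is equivalent to $s\sqrt{1-p^2}>0$, not to $p<1$. It therefore also fails when $c=1$, which the normalized assumptions allow: then $s=0$ and $\alpha_-=\alpha_+=z/p$ for every $p$. The correct case split is $\alpha_-<\alpha_+$ versus $\alpha_-=\alpha_+$, and the second case is harmless because the common root already lies on the upper branch. Second, in your angular aside the constraint $\phi<\gamma$ (that is, $p>c$) is false in general and not needed; only $\phi<\pi/2-\gamma$ is used.
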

\begin{proof}
We first verify that the quantities in
\eqref{eq:pointwise-two-roots} are well defined.  Since
$\mathbf{x}=z\mathbf{u}\in\mathbb{B}(\mathbf{m},\rho)$ with $z>0$,
Lemma~\ref{lem:S-ray-ball-intersection} gives
\(
p\ge\sqrt{1-\rho^2}.
\)
The assumption $\rho<c$ and the identity $s=\sqrt{1-c^2}$ imply
\(
\sqrt{1-\rho^2}>\sqrt{1-c^2}=s.
\)
Hence $p>s\ge0$.  Using $c^2+s^2=1$, we obtain
\(
(cp)^2-s^2(1-p^2)=p^2-s^2>0.
\)
Both $cp$ and $s\sqrt{1-p^2}$ are nonnegative, so
\begin{equation}
cp-s\sqrt{1-p^2}>0.
\Label{eq:pointwise-denominator-positive}
\end{equation}
Moreover, $p>0$ and $c>0$, and therefore
$cp+s\sqrt{1-p^2}>0$.  Thus $\alpha_-$ and $\alpha_+$ are positive
and well defined.  Since the denominator of $\alpha_-$ is at least
the denominator of $\alpha_+$, we also have
$\alpha_-\le\alpha_+$.

We now characterize pointwise feasibility.
By \eqref{eq:cone-x-u} and the definition of the dual cone,
\eqref{eq:pointwise-dominance-condition} is equivalent to
\(
\alpha\mathbf{m}-z\mathbf{u}
\in\mathcal{C}(\mathbf{u},c)^*.
\)
Applying Lemma~\ref{lem:S-dual-circular-cone} to
$\mathbf{w}=\alpha\mathbf{m}-z\mathbf{u}$ shows that this membership
condition is equivalent to
\begin{equation}
\alpha p-z
\ge
s\sqrt{\alpha^2-2\alpha zp+z^2}
\Label{eq:pointwise-soc}
\end{equation}
because 
\(
\langle\mathbf{u},\alpha\mathbf{m}-z\mathbf{u}\rangle
=\alpha p-z
\)
and
\(
\|\alpha\mathbf{m}-z\mathbf{u}\|_2^2
=\alpha^2-2\alpha zp+z^2.
\)

Since the right-hand side of \eqref{eq:pointwise-soc} is
nonnegative, \eqref{eq:pointwise-soc} is equivalent to the
conjunction of the sign condition
\begin{equation}
\alpha p-z\ge0
\Label{eq:pointwise-sign-condition}
\end{equation}
and the quadratic inequality
\begin{equation}
(\alpha p-z)^2-s^2(\alpha^2-2\alpha zp+z^2)\ge0.
\Label{eq:pointwise-quadratic-inequality}
\end{equation}
The quadratic expression in
\eqref{eq:pointwise-quadratic-inequality} factors as
\begin{align}
&(\alpha p-z)^2-s^2(\alpha^2-2\alpha zp+z^2)
\notag\\
&=
\bigl((cp-s\sqrt{1-p^2})\alpha-zc\bigr)
\bigl((cp+s\sqrt{1-p^2})\alpha-zc\bigr).
\Label{eq:pointwise-factorization}
\end{align}
Thus its boundary equation is precisely
\eqref{eq:pointwise-boundary-equation}, whose roots are
$\alpha_-$ and $\alpha_+$.  Since its leading coefficient is
$p^2-s^2>0$, the quadratic inequality holds precisely when
\[
0<\alpha\le\alpha_-
\qquad\text{or}\qquad
\alpha\ge\alpha_+.
\]

It remains to impose the sign condition
\eqref{eq:pointwise-sign-condition}.  
First, we show that the lower branch
\(
0<\alpha\le\alpha_-
\)
does not satisfy the sign condition
\eqref{eq:pointwise-sign-condition}, except in the degenerate case
where the two roots coincide. Since $p>0$, the function
\(
\alpha\mapsto\alpha p-z
\)
is increasing. Therefore, its largest value on the lower branch is
attained at $\alpha=\alpha_-$. Substituting the definition of
$\alpha_-$ gives
\begin{align}
\alpha_-p-z
=
\frac{zcp}{cp+s\sqrt{1-p^2}}-z
=
-\frac{zs\sqrt{1-p^2}}
{cp+s\sqrt{1-p^2}}
\le0.
\Label{eq:lower-branch-sign}
\end{align}
If $\alpha_-<\alpha_+$, then
$s\sqrt{1-p^2}>0$, and hence
\(
\alpha_-p-z<0.
\)
Consequently, every $0<\alpha\le\alpha_-$ satisfies
\(
\alpha p-z
\le
\alpha_-p-z
<0,
\)
which violates \eqref{eq:pointwise-sign-condition}.
If $\alpha_-=\alpha_+$, their common value already belongs to the
upper branch $\alpha\ge\alpha_+$.

Conversely, if $\alpha\ge\alpha_+$, then
\[
\alpha p-z
\ge
\alpha_+p-z
=
\frac{zs\sqrt{1-p^2}}
{cp-s\sqrt{1-p^2}}
\ge0.
\]
Hence the sign condition holds, and the quadratic inequality also
holds because $\alpha$ lies on or to the right of its larger root.
The two conditions equivalent to \eqref{eq:pointwise-soc} are
therefore satisfied.  This proves that
\eqref{eq:pointwise-dominance-condition} holds if and only if
$\alpha\ge\alpha_+$.
\qed
\end{proof}
The pointwise dominance value can now be read directly from the equivalence
proved in Lemma~\ref{lem:S-pointwise-scaling}.  Indeed,
\eqref{eq:pointwise-dominance-condition} holds exactly when
$\alpha\ge\alpha_+$.  Hence the definition \eqref{eq:pointwise-alpha} and
\eqref{eq:pointwise-two-roots} give
\begin{equation}
\alpha_{\mathrm{pt}}(\mathbf{x};\mathbf{m},c)
=
\alpha_+
=
\frac{zc}{cp-s\sqrt{1-p^2}}.
\Label{eq:pointwise-formula}
\end{equation}

\subsection{From pointwise feasibility to a common robust scaling}

The purpose of this subsection is to pass from the scaling required at one
fixed point to a common scaling that works throughout the normalized ball.
We first express the robust ball value as the supremum of the pointwise
dominance values.  We then reduce this supremum to a one-variable
maximization by maximizing along each fixed direction and replacing the
remaining choice of direction by the scalar parameter $p=p(\mathbf{u})$.

For each fixed point $\mathbf{x}\in\mathbb{B}(\mathbf{m},\rho)$, define
\[
\mathcal{A}_{\mathrm{pt}}(\mathbf{x})
:=
\left\{
\alpha>0:
\langle\alpha\mathbf{m}-\mathbf{x},\mathbf{y}\rangle\ge0
\text{ for every }\mathbf{y}\in\mathcal{C}(\mathbf{x},c)
\right\}.
\]
Lemma~\ref{lem:S-pointwise-scaling} and
\eqref{eq:pointwise-formula} give
\[
\mathcal{A}_{\mathrm{pt}}(\mathbf{x})
=[\alpha_{\mathrm{pt}}(\mathbf{x};\mathbf{m},c),\infty).
\]
We now distinguish this local set from the set of scalings that work
simultaneously at every point of the ball.  Define
\[
\mathcal{A}_{\mathrm{ball}}
:=
\left\{
\alpha>0:
\alpha\in\mathcal{A}_{\mathrm{pt}}(\mathbf{x})
\text{ for every }\mathbf{x}\in\mathbb{B}(\mathbf{m},\rho)
\right\}.
\]
Thus $\mathcal{A}_{\mathrm{ball}}$ is precisely the feasible set in the
definition of $\alpha_{\mathrm{ball}}(\mathbf{m},\rho,c)$.  Since a common
scaling must belong to every pointwise feasible set,
\begin{align}
\mathcal{A}_{\mathrm{ball}}
=
\bigcap_{\mathbf{x}\in\mathbb{B}(\mathbf{m},\rho)}
\mathcal{A}_{\mathrm{pt}}(\mathbf{x})
=
\left[
\sup_{\mathbf{x}\in\mathbb{B}(\mathbf{m},\rho)}
\alpha_{\mathrm{pt}}(\mathbf{x};\mathbf{m},c),
\infty
\right].
\Label{eq:robust-feasible-set}
\end{align}
Taking the infimum gives
\begin{equation}
\alpha_{\mathrm{ball}}(\mathbf{m},\rho,c)
=
\sup_{\mathbf{x}\in\mathbb{B}(\mathbf{m},\rho)}
\alpha_{\mathrm{pt}}(\mathbf{x};\mathbf{m},c).
\Label{eq:robust-pointwise-representation}
\end{equation}

We next reduce the supremum in
\eqref{eq:robust-pointwise-representation} to a one-variable maximization.

\begin{lemma}[Reduction to a one-variable maximization]
\Label{lem:S-reduction-to-p}
Under the normalized assumptions
\eqref{eq:normalized-common-parameters},
\begin{equation}
\sup_{\mathbf{x}\in\mathbb{B}(\mathbf{m},\rho)}
\alpha_{\mathrm{pt}}(\mathbf{x};\mathbf{m},c)
=
\max_{\sqrt{1-\rho^2}\le p\le1}
\frac{c\left(p+\sqrt{p^2-(1-\rho^2)}\right)}
{cp-s\sqrt{1-p^2}}.
\Label{eq:p-objective}
\end{equation}
\end{lemma}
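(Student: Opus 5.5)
We decompose every point of the normalized ball in polar form and maximize first along rays, then over directions. By Lemma~\ref{lem:S-normalization}, the ball $\mathbb{B}(\mathbf{m},\rho)$ does not contain the origin. Hence every $\mathbf{x}$ in it can be written uniquely as $\mathbf{x}=z\mathbf{u}$ with $z>0$ and $\|\mathbf{u}\|_2=1$. By \eqref{eq:pointwise-formula},
\[
\alpha_{\mathrm{pt}}(\mathbf{x};\mathbf{m},c)=\frac{zc}{cp-s\sqrt{1-p^2}},\qquad p=p(\mathbf{u}).
\]
This expression depends on $\mathbf{u}$ only through $p$. The supremum over the ball therefore splits as a supremum over admissible directions $\mathbf{u}$, followed by a supremum over admissible radii $z$ along the ray $\{z\mathbf{u}:z>0\}$.

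\textbf{Radial step.} By Lemma~\ref{lem:S-ray-ball-intersection}, the ray in direction $\mathbf{u}$ meets the ball iff $\sqrt{1-\rho^2}\le p\le 1$. In that case the admissible radii form exactly the interval $[z_-(p),z_+(p)]$. The denominator $cp-s\sqrt{1-p^2}$ is strictly positive, as established in the proof of Lemma~\ref{lem:S-pointwise-scaling} via $p\ge\sqrt{1-\rho^2}>s$. So for fixed $\mathbf{u}$ the pointwise value is increasing in $z$. Its maximum along the ray is attained at $z=z_+(p)=p+\sqrt{p^2-(1-\rho^2)}$, which gives the integrand in \eqref{eq:p-objective}.

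\textbf{Angular step.} We check that the set of values $\{p(\mathbf{u}):\mathbf{u}\text{ unit, ray meets ball}\}$ is the whole interval $[\sqrt{1-\rho^2},1]$. Lemma~\ref{lem:S-ray-ball-intersection} gives one inclusion. For the reverse, note that $\mathbf{u}\mapsto\langle\mathbf{m},\mathbf{u}\rangle$ on the unit sphere attains every value in $[-1,1]$. Explicitly, take $\mathbf{u}=p\mathbf{m}+\sqrt{1-p^2}\,\mathbf{v}$ with $\mathbf{v}\perp\mathbf{m}$ a unit vector, which exists since $d\ge2$; for $p=1$ simply take $\mathbf{u}=\mathbf{m}$. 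With this, the supremum over the ball equals the supremum over $p\in[\sqrt{1-\rho^2},1]$ of the one-variable function $g(p)=c(p+\sqrt{p^2-(1-\rho^2)})/(cp-s\sqrt{1-p^2})$.

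\textbf{Attainment.} It remains to replace this supremum by a maximum. The function $g$ is continuous on the compact interval $[\sqrt{1-\rho^2},1]$: the square roots are continuous there, with nonnegative arguments, and the denominator stays strictly positive. So the maximum is attained. The only delicate point, and the one I would check most carefully, is this uniform positivity of the denominator up to the endpoint $p=\sqrt{1-\rho^2}$. It follows from $\rho<c$, since then $p^2-s^2\ge c^2-\rho^2>0$, so the reduction loses nothing at the boundary.
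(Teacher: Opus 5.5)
Your proposal is correct and follows essentially the same route as the paper's proof. It writes $\mathbf{x}=z\mathbf{u}$, maximizes along each ray at $z_+(p)$ using positivity of $cp-s\sqrt{1-p^2}$, realizes every $p\in[\sqrt{1-\rho^2},1]$ via $\mathbf{u}=p\mathbf{m}+\sqrt{1-p^2}\,\mathbf{v}$ with $\mathbf{v}\perp\mathbf{m}$ (using $d\ge2$), and gets attainment from continuity on a compact interval; your explicit check that $p^2-s^2\ge c^2-\rho^2>0$ at the endpoint just makes the paper's appeal to denominator positivity fully explicit.
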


\begin{proof}
Every point of $\mathbb{B}(\mathbf{m},\rho)$ is nonzero and therefore has a
unique representation $\mathbf{x}=z\mathbf{u}$ with $z>0$ and
$\|\mathbf{u}\|_2=1$.  
Thus the ball is partitioned by the nonempty intersections of the rays $\{z\mathbf{u}:z>0\}$ with $\mathbb{B}(\mathbf{m},\rho)$. For each such direction $\mathbf{u}$, the admissible radial coordinates form the compact interval given in \eqref{eq:radial-interval}, and $\alpha_{\mathrm{pt}}(z\mathbf{u};\mathbf{m},c)$ is continuous in $z$. Hence the supremum over the ball can be obtained by first maximizing over the admissible radial coordinates on each ray and then taking the supremum over all directions whose rays intersect the ball as
\begin{align}
\sup_{\mathbf{x}\in\mathbb{B}(\mathbf{m},\rho)}
\alpha_{\mathrm{pt}}(\mathbf{x};\mathbf{m},c)
=
\sup_{\substack{\|\mathbf{u}\|_2=1\\
\exists z>0:\ z\mathbf{u}\in\mathbb{B}(\mathbf{m},\rho)}}
\ \max_{\substack{z>0\\
z\mathbf{u}\in\mathbb{B}(\mathbf{m},\rho)}}
\alpha_{\mathrm{pt}}(z\mathbf{u};\mathbf{m},c).
\Label{eq:direction-radial-decomposition}
\end{align}

Fix a unit vector $\mathbf{u}$ occurring in the outer supremum and write
$p:=p(\mathbf{u})$.  Formula \eqref{eq:pointwise-formula} gives
\begin{equation}
\alpha_{\mathrm{pt}}(z\mathbf{u};\mathbf{m},c)
=
z\frac{c}{cp-s\sqrt{1-p^2}}.
\Label{eq:pointwise-fixed-direction}
\end{equation}
By \eqref{eq:radial-interval}, the admissible radial coordinates form
the interval $[z_-(p),z_+(p)]$. 
Since the coefficient of $z$ in \eqref{eq:pointwise-fixed-direction} is strictly positive by \eqref{eq:pointwise-denominator-positive}, $\alpha_{\mathrm{pt}}(z\mathbf{u};\mathbf{m},c)$ is strictly increasing in $z$. Its maximum is therefore attained at the upper endpoint $z=z_+(p)$. Substituting the definition of $z_+(p)$ into \eqref{eq:pointwise-fixed-direction} gives
\begin{equation}
\max_{\substack{z>0\\
z\mathbf{u}\in\mathbb{B}(\mathbf{m},\rho)}}
\alpha_{\mathrm{pt}}(z\mathbf{u};\mathbf{m},c)
=
\frac{c\left(p+\sqrt{p^2-(1-\rho^2)}\right)}
{cp-s\sqrt{1-p^2}}.
\Label{eq:fixed-direction-maximum}
\end{equation}

It remains to replace the outer choice of $\mathbf{u}$ in
\eqref{eq:direction-radial-decomposition} by the corresponding choice of
$p=p(\mathbf{u})$.  Equation \eqref{eq:p-range} gives one inclusion in
\begin{equation}
\left\{
 p(\mathbf{u}):
 \|\mathbf{u}\|_2=1,
 \ \exists z>0:\ z\mathbf{u}\in\mathbb{B}(\mathbf{m},\rho)
\right\}
=
\left[\sqrt{1-\rho^2},1\right].
\Label{eq:p-attainable-set}
\end{equation}
For the reverse inclusion, choose a unit vector $\mathbf{v}$ orthogonal to
$\mathbf{m}$.  For every
$p\in[\sqrt{1-\rho^2},1]$, the vector
\(
\mathbf{u}
=
p\mathbf{m}+\sqrt{1-p^2}\,\mathbf{v}
\)
is a unit vector satisfying $p(\mathbf{u})=p$, and
\eqref{eq:p-range} shows that its ray meets the ball.  This proves
\eqref{eq:p-attainable-set}.  Substituting
\eqref{eq:fixed-direction-maximum} and \eqref{eq:p-attainable-set} into
\eqref{eq:direction-radial-decomposition} yields
\eqref{eq:p-objective}.  The maximum is attained because its objective is
continuous on the compact interval $[\sqrt{1-\rho^2},1]$ and its denominator
is positive by \eqref{eq:pointwise-denominator-positive}.
\qed
\end{proof}

The remaining problem is the one-variable maximization on the right-hand
side of \eqref{eq:p-objective}.  We evaluate it by introducing an angular
parametrization of $p$.

Choose $\phi\in(0,\pi/2)$ and $\psi\in[0,\pi/2)$ such that
\[
\rho=\sin\phi,
\qquad
c=\cos\psi,
\qquad
s=\sin\psi.
\]
Since $\rho<c$, we have $\phi+\psi<\pi/2$.  Under the substitution
$p=\cos\theta$, the interval
$\sqrt{1-\rho^2}\le p\le1$ becomes $0\le\theta\le\phi$, and the objective
in \eqref{eq:p-objective} becomes the function defined below.

\begin{lemma}[One-variable angular maximization]
\Label{lem:S-one-variable-angular-maximum}
Let $0<\rho<c\le1$, set
\[
s:=\sqrt{1-c^2},
\qquad
\phi:=\arcsin\rho,
\qquad
\psi:=\arccos c,
\]
and define
\begin{equation}
F(\theta)
:=
\frac{c\left(
\cos\theta+\sqrt{\rho^2-\sin^2\theta}
\right)}
{\cos(\theta+\psi)},
\qquad
0\le\theta\le\phi.
\Label{eq:angular-objective}
\end{equation}
Then
\begin{equation}
\max_{0\le\theta\le\phi}F(\theta)
=
\frac{c(1-\rho^2)}{c-\rho}.
\Label{eq:one-variable-angular-maximum}
\end{equation}
If $c<1$, the maximum is attained at the unique point
$\theta_*\in(0,\phi)$ satisfying
\begin{equation}
\tan\theta_*
=
\frac{\rho s}{1-\rho c}.
\Label{eq:theta-star-supp}
\end{equation}
If $c=1$, the maximum is attained at $\theta_*=0$.
\end{lemma}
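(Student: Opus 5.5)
We maximize $F$ over $[0,\phi]$. First, put $F$ in a form where the square root disappears. Since $\rho=\sin\phi$, we have $\sqrt{\rho^2-\sin^2\theta}=\sqrt{\sin(\phi-\theta)\sin(\phi+\theta)}$. That is not linear in $\theta$, so a direct trigonometric substitution is awkward. Instead we go back to the geometric meaning. The numerator $c\,z_+(\cos\theta)$ is $c$ times the far intersection of the ray at angle $\theta$ with the normalized ball. So we parametrize the boundary point $\mathbf{x}=z\mathbf{u}$ by its position on the sphere $\|\mathbf{x}-\mathbf{m}\|_2=\rho$ rather than by $\theta$. Concretely, we write $\mathbf{x}=\mathbf{m}+\rho(\cos\omega\,\mathbf{m}+\sin\omega\,\mathbf{v})$ with $\mathbf{v}\perp\mathbf{m}$ a unit vector. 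The quantity to maximize, $zc/\cos(\theta+\psi)$, equals $c\,\|\mathbf{x}\|_2/\cos(\theta+\psi)$, where $\theta$ is the angle of $\mathbf{x}$ from $\mathbf{m}$.

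The key simplification: $\|\mathbf{x}\|_2\cos(\theta+\psi)$ is the inner product of $\mathbf{x}$ with the unit vector $\mathbf{n}_\psi$ obtained by rotating $\mathbf{m}$ by angle $\psi$ away from $\mathbf{x}$ in the $(\mathbf{m},\mathbf{v})$ plane. Hence
\[
\frac{F}{c}=\frac{\|\mathbf{x}\|_2}{\langle \mathbf{x},\mathbf{n}_\psi\rangle}=\frac{1}{\cos(\theta+\psi)},
\]
which only depends on $\theta$. This shows the radial maximization has already been done, and the remaining dependence lies in how $z_+$ and $\theta$ are linked. So the plan is more pedestrian: treat $F$ directly by calculus.

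\textbf{Step 1.} Write $g(\theta)=\cos\theta+\sqrt{\rho^2-\sin^2\theta}=z_+$ and note $g'(\theta)=-\sin\theta\, g(\theta)/\sqrt{\rho^2-\sin^2\theta}$. This follows by differentiating $z_+^2-2z_+\cos\theta+1-\rho^2=0$, which gives $z_+'=-z_+\sin\theta/(z_+-\cos\theta)$.

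\textbf{Step 2.} Then
\[
\frac{F'}{F}=\tan(\theta+\psi)-\frac{\sin\theta}{\sqrt{\rho^2-\sin^2\theta}}.
\]
The first term is increasing and positive at $\theta=0$ (when $\psi>0$). The second starts at $0$ and tends to $+\infty$ as $\theta\to\phi$, and the difference is strictly decreasing. One checks this via monotonicity of $\sin\theta/\sqrt{\rho^2-\sin^2\theta}$ versus $\tan$; this is the step I expect to need the most care, possibly by clearing denominators and showing the critical equation has a unique root. Hence $F$ increases and then decreases, with a unique interior critical point.

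\textbf{Step 3.} Solve the critical equation
\[
\sin\theta\cos(\theta+\psi)=\sin(\theta+\psi)\sqrt{\rho^2-\sin^2\theta}.
\]
Squaring and using $\cos^2(\theta+\psi)+\sin^2(\theta+\psi)=1$ gives $\sin^2\theta=\rho^2\sin^2(\theta+\psi)$, i.e.\ $\sin\theta=\rho\sin(\theta+\psi)$. Expanding yields $\tan\theta_*=\rho s/(1-\rho c)$, which is \eqref{eq:theta-star-supp}.

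\textbf{Step 4.} Evaluate $F(\theta_*)$. At $\theta_*$, $\sqrt{\rho^2-\sin^2\theta_*}=\rho\cos(\theta_*+\psi)$, because $\rho^2-\rho^2\sin^2(\theta_*+\psi)=\rho^2\cos^2(\theta_*+\psi)$ and the sign is positive since $\theta_*+\psi<\pi/2$. Thus
\[
F(\theta_*)=c\left(\frac{\cos\theta_*}{\cos(\theta_*+\psi)}+\rho\right).
\]
Expanding $\cos(\theta_*+\psi)=\cos\theta_*(c-s\tan\theta_*)$ gives
\[
\frac{\cos\theta_*}{\cos(\theta_*+\psi)}=\frac{1}{c-s\tan\theta_*}=\frac{1-\rho c}{c-\rho c^2-\rho s^2}=\frac{1-\rho c}{c-\rho}.
\]
Therefore $F(\theta_*)=c\bigl((1-\rho c)+\rho(c-\rho)\bigr)/(c-\rho)=c(1-\rho^2)/(c-\rho)$.

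\textbf{Case $c=1$.} Here $\psi=0$ and $F'/F=\tan\theta-\sin\theta/\sqrt{\rho^2-\sin^2\theta}\le0$, since $\sqrt{\rho^2-\sin^2\theta}\le\cos\theta$. So the maximum is at $\theta=0$, where $F(0)=1+\rho=(1-\rho^2)/(1-\rho)$, matching the formula.
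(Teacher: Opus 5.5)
Your log-derivative formula, the critical relation $\sin\theta_*=\rho\sin(\theta_*+\psi)$, the evaluation of $F(\theta_*)$, and the $c=1$ case are correct and match the paper's computation. The gap is in Step~2, which is where the lemma's real content lies: showing that the critical point is the unique global maximizer. You assert that $h(\theta):=\tan(\theta+\psi)-\sin\theta/A(\theta)$, with $A(\theta)=\sqrt{\rho^2-\sin^2\theta}$, is strictly decreasing. This is false. Since $\frac{d}{d\theta}\bigl(\sin\theta/A(\theta)\bigr)=\rho^2\cos\theta/A(\theta)^3$, you get $h'(0)=1/c^2-1/\rho$, which is positive whenever $c^2<\rho<c$. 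For example, $c=1/2$ and $\rho=2/5$ give $h'(0)=3/2$. So an argument based on the difference of two increasing tangents cannot work as stated, and unimodality of $F$ is left unproved. Separately, your opening geometric paragraph should be deleted. There $F/c$ equals $\|\mathbf{x}\|_2/\cos(\theta+\psi)$, not $\|\mathbf{x}\|_2/\langle\mathbf{x},\mathbf{n}_\psi\rangle$, so the displayed identity is wrong.

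The paper avoids this by comparing angles rather than tangents. It writes $\sin\theta/A(\theta)=\tan\beta(\theta)$ with $\sin\beta=\sin\theta/\rho$ and $\cos\beta=A/\rho$. Then the sign of $F'$ is the sign of $\theta+\psi-\beta(\theta)$. Because $A^2<\cos^2\theta$, one has $\beta'(\theta)=\cos\theta/A(\theta)>1$, so $\beta(\theta)-\theta$ increases strictly from $0$ to $\pi/2-\phi>\psi$.

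You can also close the gap using only what you already have. On $(0,\phi)$, both sides of your critical equation are nonnegative, because $\theta+\psi<\phi+\psi<\pi/2$. Hence the squaring in Step~3 is reversible. Moreover, $\sin\theta=\rho\sin(\theta+\psi)$ is equivalent to $\tan\theta=\rho s/(1-\rho c)$, which has exactly one solution in $(0,\pi/2)$. So $h$ has at most one zero in $(0,\phi)$. For $c<1$, $h(0)=\tan\psi>0$, and $h(\theta)\to-\infty$ as $\theta\uparrow\phi$ because $\tan(\phi+\psi)$ is finite. Therefore $h$ has exactly one zero, with $h>0$ before it and $h<0$ after it. This gives the increase-then-decrease behavior you need, without any monotonicity of $h$.
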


\begin{proof}
The proof is divided into three steps.  In Step 1, we introduce an auxiliary
angle $\beta(\theta)$ and show that the sign of $F'(\theta)$ is determined by
the comparison between $\beta(\theta)$ and $\theta+\psi$.  In Step 2, for
$c<1$, we prove that the difference $\beta(\theta)-\theta$ is strictly
increasing.  This identifies the unique maximizer and provides the relation
needed to evaluate the maximum.  In Step 3, we use the same derivative-sign
criterion to treat the endpoint case $c=1$.

\noindent\textbf{Step 1: Reduction to a comparison of two angles.}
The objective of this step is to prove that, for $0<\theta<\phi$, the
derivative of $F$ has the same sign as
\begin{equation}
\theta+\psi-\beta(\theta),
\Label{eq:angular-derivative-sign-target}
\end{equation}
where the auxiliary angle $\beta(\theta)$ will be defined below.  This
reduction replaces the direct analysis of the derivative by a comparison of
two angles.

Set
\begin{equation}
A(\theta):=\sqrt{\rho^2-\sin^2\theta},
\qquad 0\le\theta\le\phi.
\Label{eq:angular-A-definition}
\end{equation}
For $0\le\theta<\phi$, define $\beta(\theta)\in[0,\pi/2)$ by
\begin{equation}
\sin\beta(\theta)=\frac{\sin\theta}{\rho},
\qquad
\cos\beta(\theta)=\frac{A(\theta)}{\rho}.
\Label{eq:angular-beta-definition}
\end{equation}
To verify that this definition is well posed, equation \eqref{eq:angular-A-definition} gives 
\begin{equation} \left(\frac{\sin\theta}{\rho}\right)^2 + \left(\frac{A(\theta)}{\rho}\right)^2 = 1. \Label{eq:angular-beta-unit-circle} \end{equation} 
Moreover, $\sin\theta\ge0$ and $A(\theta)>0$ for $0\le\theta<\phi$. Hence the two quantities on the right-hand side of \eqref{eq:angular-beta-definition} are the sine and cosine of a unique angle $\beta(\theta)\in[0,\pi/2)$.
In particular, \eqref{eq:angular-beta-definition} gives
\begin{equation}
\frac{\sin\theta}{A(\theta)}=\tan\beta(\theta).
\Label{eq:angular-beta-tangent}
\end{equation}

We now express the derivative of $F$ in terms of this angle.  
The relation
\eqref{eq:angular-A-definition} implies
\(
A'(\theta)=-\frac{\sin\theta\cos\theta}{A(\theta)}\)
for $0<\theta<\phi$,
which yields
\begin{align}
\frac{d}{d\theta}
\log\bigl(\cos\theta+A(\theta)\bigr)
=
\frac{-\sin\theta+A'(\theta)}
{\cos\theta+A(\theta)}
=-\frac{\sin\theta}{A(\theta)}.
\Label{eq:angular-numerator-log-derivative}
\end{align}
Applying \eqref{eq:angular-numerator-log-derivative} to the definition
\eqref{eq:angular-objective} and then using
\eqref{eq:angular-beta-tangent}, we obtain
\begin{align}
\frac{d}{d\theta}\log F(\theta)
=
-\frac{\sin\theta}{A(\theta)}
+\tan(\theta+\psi)
=
\tan(\theta+\psi)-\tan\beta(\theta).
\Label{eq:angular-derivative-angle-comparison}
\end{align}
For $0<\theta<\phi$, both $\theta+\psi$ and $\beta(\theta)$ belong to
$(0,\pi/2)$.  Since the tangent function is strictly increasing on this
interval, \eqref{eq:angular-derivative-angle-comparison} shows that the
derivative of $F$ has the same sign as
\begin{equation}
\theta+\psi-\beta(\theta).
\Label{eq:angular-derivative-sign}
\end{equation}

\noindent\textbf{Step 2: The unique maximizer when $c<1$.}
Assume that $c<1$, or equivalently that $\psi>0$.  The objective of this step
is first to prove that there exists a unique $\theta_*\in(0,\phi)$ satisfying
\begin{equation}
\beta(\theta_*)=\theta_*+\psi,
\Label{eq:angular-critical-angle-target}
\end{equation}
and then to evaluate $F(\theta_*)$.  By the derivative-sign criterion proved
in Step 1, the first part will show that $F$ is increasing before
$\theta_*$ and decreasing after $\theta_*$.  Hence it will also show that
$\theta_*$ is the unique global maximizer.

Define
\begin{equation}
G(\theta):=\beta(\theta)-\theta,
\qquad 0\le\theta<\phi.
\Label{eq:angular-G-definition}
\end{equation}
We first show that $G$ is strictly increasing.  Differentiating the first
identity in \eqref{eq:angular-beta-definition} gives
\(
\cos\theta
=
\rho\cos\beta(\theta)\,\beta'(\theta).
\)
Combining this relation with the second identity in \eqref{eq:angular-beta-definition}, we therefore
obtain
\begin{equation}
\beta'(\theta)=\frac{\cos\theta}{A(\theta)}.
\Label{eq:angular-beta-derivative}
\end{equation}
Moreover, \eqref{eq:angular-A-definition} and $0<\rho<1$ give
\begin{align}
A(\theta)^2
=
\rho^2-\sin^2\theta
<
1-\sin^2\theta
=
\cos^2\theta.\label{VBWE}
\end{align}
Hence \eqref{eq:angular-beta-derivative} implies
$\beta'(\theta)>1$, and thus $G$ is strictly increasing on
$[0,\phi)$.

The definition \eqref{eq:angular-beta-definition} gives
\begin{equation}
G(0)=0,
\qquad
\lim_{\theta\uparrow\phi}G(\theta)
=
\frac{\pi}{2}-\phi.
\Label{eq:angular-G-endpoints}
\end{equation}
Since $\rho<c$, we have $\phi+\psi<\pi/2$, and therefore
\begin{equation}
0<\psi<\frac{\pi}{2}-\phi.
\Label{eq:angular-psi-range}
\end{equation}
The strict monotonicity of $G$, together with
\eqref{eq:angular-G-endpoints} and \eqref{eq:angular-psi-range}, shows that
there exists a unique $\theta_*\in(0,\phi)$ satisfying
\begin{equation}
G(\theta_*)=\psi,
\qquad\text{or equivalently}\qquad
\beta(\theta_*)=\theta_*+\psi.
\Label{eq:angular-critical-angle}
\end{equation}
By \eqref{eq:angular-G-definition}, the expression in
\eqref{eq:angular-derivative-sign} equals $\psi-G(\theta)$.  Hence it is
positive for $0<\theta<\theta_*$ and negative for
$\theta_*<\theta<\phi$.  Equation
\eqref{eq:angular-derivative-angle-comparison} therefore shows that $F$ is
strictly increasing before $\theta_*$ and strictly decreasing after
$\theta_*$.
Thus $\theta_*$ is the unique global maximizer of $F$.
We have now proved the existence and uniqueness of the maximizer.  It remains
to evaluate $F$ at this point.  For this purpose, we use the critical-point
relation \eqref{eq:angular-critical-angle}.

Substituting \eqref{eq:angular-critical-angle} into
\eqref{eq:angular-beta-definition} gives
\begin{equation}
\sin\theta_*=\rho\sin(\theta_*+\psi),
\qquad
A(\theta_*)=\rho\cos(\theta_*+\psi).
\Label{eq:angular-critical-relations}
\end{equation}
Using $c=\cos\psi$ and $s=\sin\psi$, the first identity in
\eqref{eq:angular-critical-relations} and the angle-addition formula give
\begin{align}
\sin\theta_*
=\rho\sin(\theta_*+\psi)
=\rho c\sin\theta_*+\rho s\cos\theta_*.
\Label{eq:angular-critical-sine-expansion}
\end{align}
Moving the term $\rho c\sin\theta_*$ to the left-hand side of
\eqref{eq:angular-critical-sine-expansion}, we obtain
\begin{equation}
(1-\rho c)\sin\theta_*
=
\rho s\cos\theta_*.
\Label{eq:angular-critical-sine-rearranged}
\end{equation}
Since $1-\rho c>0$ and $\cos\theta_*>0$, we may divide
\eqref{eq:angular-critical-sine-rearranged} by
$(1-\rho c)\cos\theta_*$.  This gives
\begin{equation}
\tan\theta_*
=
\frac{\rho s}{1-\rho c}.
\Label{eq:theta-star-supp-again}
\end{equation}
Thus \eqref{eq:theta-star-supp} holds.

We next use \eqref{eq:theta-star-supp-again} to evaluate the remaining
ratio. The combination of
\eqref{eq:theta-star-supp-again} and 
the angle-addition formula
implies
\begin{align}
&\frac{\cos\theta_*}{\cos(\theta_*+\psi)}
=
\frac{\cos\theta_*}
{c\cos\theta_*-s\sin\theta_*}
=
\frac{1}{c-s\tan\theta_*}
=
\frac{1}
{c-\dfrac{\rho s^2}{1-\rho c}}
\notag\\
&=
\frac{1-\rho c}
{c(1-\rho c)-\rho s^2}
=
\frac{1-\rho c}{c-\rho},
\Label{eq:angular-critical-cosine-ratio}
\end{align}
where the last equality follows from $c^2+s^2=1$.
Dividing the second identity in
\eqref{eq:angular-critical-relations} by
$\cos(\theta_*+\psi)>0$ and then applying
\eqref{eq:angular-critical-cosine-ratio} to
\eqref{eq:angular-objective}, we obtain
\begin{align}
F(\theta_*)
=
c\left(
\frac{\cos\theta_*}{\cos(\theta_*+\psi)}+\rho
\right)
=
c\left(
\frac{1-\rho c}{c-\rho}+\rho
\right)
=
\frac{c(1-\rho^2)}{c-\rho}.
\Label{eq:angular-maximum-evaluation}
\end{align}
This proves \eqref{eq:one-variable-angular-maximum} when $c<1$.

\noindent\textbf{Step 3: The endpoint maximizer when $c=1$.}
It remains to treat the case $c=1$, which is equivalent to $\psi=0$.  The
objective of this step is to use the derivative-sign criterion from Step 1,
together with the monotonicity calculation for $G$ in Step 2, to show that
$F$ is strictly decreasing on $(0,\phi)$.  This will imply that the maximum
is attained at the endpoint $\theta_*=0$.

The definition of $G$ in \eqref{eq:angular-G-definition} and the derivative formula \eqref{eq:angular-beta-derivative} remain valid when $c=1$. Moreover, the relations
\eqref{VBWE} and \eqref{eq:angular-beta-derivative} give 
\( \beta'(\theta)>1. \)
It follows from \eqref{eq:angular-G-definition} that 
\[ G'(\theta) = \beta'(\theta)-1 > 0. \] 
Thus $G$ is strictly increasing also when $c=1$. 
Since $G(0)=0$ by \eqref{eq:angular-G-endpoints}, we obtain 
\begin{equation} G(\theta)>0, \qquad 0<\theta<\phi. \Label{eq:angular-G-positive-c-one} 
\end{equation} 
Since $\psi=0$, the definition \eqref{eq:angular-G-definition} and \eqref{eq:angular-G-positive-c-one} give 
\( \theta+\psi-\beta(\theta) = -G(\theta) < 0. \)
Therefore, the derivative-sign criterion \eqref{eq:angular-derivative-angle-comparison} shows that 
\( F'(\theta)<0 \) for $0<\theta<\phi$. 

Hence, $F$ is maximized at
$\theta_*=0$.  Substitution into \eqref{eq:angular-objective} gives
\(
F(0)=1+\rho
=
\frac{1-\rho^2}{1-\rho}.
\)
This is the value in \eqref{eq:one-variable-angular-maximum} for $c=1$.
Together with Step 2, this proves the lemma.
\qed
\end{proof}

We now return to the normalized ball.  The preceding lemma evaluates the
one-variable objective obtained in Lemma~\ref{lem:S-reduction-to-p}; it
remains only to realize the maximizing parameters by a point of the ball.

\begin{lemma}[Angular maximum and attainment]
\Label{lem:S-angular-maximum}
Let $d\ge2$, $\|\mathbf{m}\|_2=1$, and $0<\rho<c\le1$.  
Then, the relation
\begin{equation}
\sup_{\mathbf{x}\in\mathbb{B}(\mathbf{m},\rho)}
\alpha_{\mathrm{pt}}(\mathbf{x};\mathbf{m},c)
=
\frac{c(1-\rho^2)}{c-\rho}
\Label{eq:normalized-ball-value}
\end{equation}
holds, and the supremum is attained.
\end{lemma}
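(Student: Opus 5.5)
The plan is to combine Lemma~\ref{lem:S-reduction-to-p} with Lemma~\ref{lem:S-one-variable-angular-maximum} through the substitution $p=\cos\theta$, and then exhibit an explicit point of the ball at which the pointwise threshold equals the maximum.

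First, Lemma~\ref{lem:S-reduction-to-p} (valid under \eqref{eq:normalized-common-parameters}, which holds since $d\ge2$, $\|\mathbf{m}\|_2=1$, $0<\rho<c\le1$) expresses the supremum as the maximum over $p\in[\sqrt{1-\rho^2},1]$ of
\[
\frac{c\bigl(p+\sqrt{p^2-(1-\rho^2)}\bigr)}{cp-s\sqrt{1-p^2}}.
\]
Setting $p=\cos\theta$ with $\theta\in[0,\phi]$, $\phi=\arcsin\rho$, the map $\theta\mapsto\cos\theta$ is a bijection onto $[\sqrt{1-\rho^2},1]$ because $\cos\phi=\sqrt{1-\rho^2}$. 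It remains to check that the objectives agree. We have $\sqrt{1-p^2}=\sin\theta$, and
\[
p^2-(1-\rho^2)=\rho^2-\sin^2\theta,
\]
so the numerator becomes $c(\cos\theta+A(\theta))$. The denominator is $c\cos\theta-s\sin\theta=\cos(\theta+\psi)$, using $c=\cos\psi$ and $s=\sin\psi$. Hence the objective is exactly $F(\theta)$ from \eqref{eq:angular-objective}. Lemma~\ref{lem:S-one-variable-angular-maximum} then gives the value $c(1-\rho^2)/(c-\rho)$, which proves \eqref{eq:normalized-ball-value}.

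For attainment, take the maximizer $\theta_*$ from Lemma~\ref{lem:S-one-variable-angular-maximum} and set $p_*=\cos\theta_*$. Because $d\ge2$, we may choose a unit vector $\mathbf{v}\perp\mathbf{m}$ and define
\[
\mathbf{u}_*=p_*\mathbf{m}+\sqrt{1-p_*^2}\,\mathbf{v},
\qquad
\mathbf{x}_*=z_+(p_*)\,\mathbf{u}_*.
\]
By Lemma~\ref{lem:S-ray-ball-intersection}, $\mathbf{x}_*\in\mathbb{B}(\mathbf{m},\rho)$, since $p_*$ lies in \eqref{eq:p-range}. By \eqref{eq:pointwise-formula} and \eqref{eq:fixed-direction-maximum}, $\alpha_{\mathrm{pt}}(\mathbf{x}_*;\mathbf{m},c)=F(\theta_*)$, so the supremum is a maximum.

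The only delicate point is this attainment step, since the supremum is over a set whose parametrization uses $\mathbf{u}$ and $z$. It is handled by the explicit construction above, which relies on $d\ge2$ to supply the orthogonal direction $\mathbf{v}$. (When $c=1$, $\theta_*=0$ and $\mathbf{v}$ is not even needed.) Everything else is a direct substitution into results already established.
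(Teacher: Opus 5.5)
Your proposal is correct and follows essentially the same route as the paper: you reduce to the one-variable maximum via Lemma~\ref{lem:S-reduction-to-p}, identify the objective with $F(\theta)$ under $p=\cos\theta$, invoke Lemma~\ref{lem:S-one-variable-angular-maximum}, and obtain attainment at $z_+(p_*)\mathbf{u}_*$. The only cosmetic difference is that you write the direction $\mathbf{u}_*=p_*\mathbf{m}+\sqrt{1-p_*^2}\,\mathbf{v}$ out explicitly, whereas the paper cites \eqref{eq:p-attainable-set}, whose proof uses exactly this construction.
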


\begin{proof}
Lemma~\ref{lem:S-reduction-to-p} expresses the required supremum as
\begin{equation}
\sup_{\mathbf{x}\in\mathbb{B}(\mathbf{m},\rho)}
\alpha_{\mathrm{pt}}(\mathbf{x};\mathbf{m},c)
=
\max_{\sqrt{1-\rho^2}\le p\le1}
\frac{c\left(p+\sqrt{p^2-(1-\rho^2)}\right)}
{cp-s\sqrt{1-p^2}}.
\Label{eq:angular-assembly-p-maximum}
\end{equation}
We now identify this scalar objective with the function studied in
Lemma~\ref{lem:S-one-variable-angular-maximum}.

Let $\phi:=\arcsin\rho$ and $\psi:=\arccos c$, and set $p=\cos\theta$.
Because $0<\phi<\pi/2$, the map $\theta\mapsto\cos\theta$ is a bijection
from $[0,\phi]$ onto $[\sqrt{1-\rho^2},1]$.  For corresponding values of
$p$ and $\theta$, we have $\sqrt{1-p^2}=\sin\theta$ and
$\sqrt{p^2-(1-\rho^2)}=\sqrt{\rho^2-\sin^2\theta}$.  The identity
$c\cos\theta-s\sin\theta=\cos(\theta+\psi)$ then shows that the objective
on the right-hand side of \eqref{eq:angular-assembly-p-maximum} is exactly
$F(\theta)$ from \eqref{eq:angular-objective}.  Consequently,
\begin{equation}
\sup_{\mathbf{x}\in\mathbb{B}(\mathbf{m},\rho)}
\alpha_{\mathrm{pt}}(\mathbf{x};\mathbf{m},c)
=
\max_{0\le\theta\le\phi}F(\theta).
\Label{eq:angular-assembly-theta-maximum}
\end{equation}
Lemma~\ref{lem:S-one-variable-angular-maximum} evaluates the right-hand
side as $c(1-\rho^2)/(c-\rho)$.  This proves
\eqref{eq:normalized-ball-value}.

It remains to show that the supremum is attained by a point of the normalized ball $\mathbb{B}(\mathbf{m},\rho)$. 
Let
$\theta_*$ be a maximizing angle supplied by
Lemma~\ref{lem:S-one-variable-angular-maximum}, and define
$p_*:=\cos\theta_*$.  
Since $0\le\theta_*\le\phi$, the value $p_*$ belongs
to $[\sqrt{1-\rho^2},1]$.  
Equation \eqref{eq:p-attainable-set} therefore
provides a unit vector $\mathbf{u}_*$ such that
$p(\mathbf{u}_*)=\langle\mathbf{m},\mathbf{u}_*\rangle=p_*$ and the ray
$\{z\mathbf{u}_*:z>0\}$ intersects $\mathbb{B}(\mathbf{m},\rho)$.

For the direction of $\mathbf{u}_*$, the fixed-direction maximum
\eqref{eq:fixed-direction-maximum} is attained at $z=z_+(p_*)$.  Define
\[
\mathbf{x}_*:=z_+(p_*)\mathbf{u}_*.
\]
Then $\mathbf{x}_*$ belongs to $\mathbb{B}(\mathbf{m},\rho)$, and
\eqref{eq:fixed-direction-maximum} gives
\begin{align*}
\alpha_{\mathrm{pt}}(\mathbf{x}_*;\mathbf{m},c)
=
\frac{c\left(p_*+\sqrt{p_*^2-(1-\rho^2)}\right)}
{cp_*-s\sqrt{1-p_*^2}} 
=F(\theta_*)
=
\frac{c(1-\rho^2)}{c-\rho}.
\end{align*}
Thus $\mathbf{x}_*$ attains the supremum in
\eqref{eq:normalized-ball-value}.
\qed
\end{proof}

\subsection{The one-dimensional case and completion of the ball theorem}
\Label{S-one-dimensional-case}

Lemma~\ref{lem:S-angular-maximum} completes the maximization required for the
normalized ball when $d\ge2$.  Its proof uses a unit vector orthogonal to
$\mathbf{m}$ to realize the angles $\theta\in[0,\phi]$.  No such angular
variation is available when $d=1$.  We therefore determine the robust ball
value in one dimension directly from the definition
\eqref{eq:ball-alpha}.  After treating this remaining case, we combine the
results for $d\ge2$ and $d=1$ to complete the proof of
Theorem~\ref{thm:alpha-B}.

\begin{lemma}[One-dimensional robust value]
\Label{lem:S-one-dimensional}
Let $d=1$, $m\ne0$, $c\in(0,1]$, and $0<R<c|m|$. Then the exact robust
ball value is
\begin{equation}
\alpha_{\mathrm{ball}}(m,R,c)
=
1+\frac{R}{|m|}.
\Label{eq:one-dimensional-value}
\end{equation}
This value satisfies
\begin{equation}
1+\frac{R}{|m|}
\le
\frac{c(|m|^2-R^2)}
{|m|(c|m|-R)},
\Label{eq:one-dimensional-comparison}
\end{equation}
and equality holds if and only if $c=1$.
\end{lemma}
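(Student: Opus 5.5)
In one dimension the ball is the interval $[m-R,m+R]$ and the cone attached to each of its points is a ray, so I will compute the robust value directly from the definition \eqref{eq:ball-alpha}.

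Since $R<c|m|\le|m|$, every $x\in[m-R,m+R]$ is nonzero and has the sign of $m$. Next I identify the cone. For $x\neq0$, the condition $xy\ge c|x||y|$ holds exactly when $xy\ge0$ and, in the case $y\neq0$, also $\mathrm{sign}(x)\mathrm{sign}(y)\ge c$. Because $c\in(0,1]$, this gives $\mathcal{C}(x,c)=\{y: \mathrm{sign}(y)\in\{0,\mathrm{sign}(x)\}\}$. This is the closed half-line in the direction of $\mathrm{sign}(m)$, independently of $c$.

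Dominance, $(\alpha m-x)y\ge0$ for every such $y$, is therefore equivalent to $(\alpha m-x)\,\mathrm{sign}(m)\ge0$. This reads $\alpha|m|\ge|x|$. The robust condition is then $\alpha|m|\ge\max_{x\in[m-R,m+R]}|x|=|m|+R$, which gives the value $1+R/|m|$ in \eqref{eq:one-dimensional-value}. The infimum is attained, so it is in fact a minimum.

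For \eqref{eq:one-dimensional-comparison}, write $M=|m|$ and multiply both sides by the positive quantity $M(cM-R)$. The inequality becomes $(M+R)(cM-R)\le c(M-R)(M+R)$. Dividing by $M+R>0$ leaves $cM-R\le cM-cR$, which is $(1-c)R\ge0$. This holds because $R>0$ and $c\le1$. Every step is an equivalence, so equality holds exactly when $(1-c)R=0$, that is, when $c=1$.

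I do not expect a real obstacle here. The only points that need care are that the cone does not depend on $c$ when $d=1$, and that the sign of $m$ is the same for all points of the ball. Both follow directly from $R<|m|$.
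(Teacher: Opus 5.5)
Your proof is correct and follows the paper's argument essentially step by step: you identify $\mathcal{C}(x,c)$ as the closed half-line with the sign of $m$, reduce pointwise dominance to $\alpha|m|\ge|x|$, and maximize $|x|$ over $[m-R,m+R]$ to obtain the attained value $1+R/|m|$. The only difference is cosmetic and lies in the comparison \eqref{eq:one-dimensional-comparison}: you clear denominators to reach $(1-c)R\ge0$, while the paper writes the difference in the normalized variable $\rho=R/|m|$ as $\rho(1-c)(1+\rho)/(c-\rho)$, and the two computations are equivalent.
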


\begin{proof}
For $d=1$,
we have \(
\mathbb{B}(m,R)=[m-R,m+R].
\)
We determine directly the positive numbers $\alpha$ satisfying the condition
in the definition \eqref{eq:ball-alpha}.

Since $R<c|m|\le|m|$, the interval $[m-R,m+R]$ does not contain zero.
Every $x$ in this interval therefore has the same sign as $m$.  For a
nonzero scalar $x$, the inequality
\(
xy\ge c|x||y|
\)
holds if and only if either $y=0$ or $x$ and $y$ have the same sign.
Thus $\mathcal{C}(x,c)$ consists of zero and the scalars having the same sign
as $x$.

Consequently, for a fixed $x\in[m-R,m+R]$, 
the scaling $\alpha$ is pointwise feasible, i.e., 
the condition
\((\alpha m-x)y\ge0\)
holds for every $y\in\mathcal{C}(x,c)$
if and only if $\alpha m-x$ has the same sign as $m$ or is zero, which is equivalent to the condition
\(
\alpha|m|\ge|x|.
\)  
Therefore, a scaling is feasible for the entire interval if and only if $\alpha|m|$ is at least as large as $|x|$ for every $x\in[m-R,m+R]$. This condition is equivalent to
\begin{equation} 
\alpha|m| \ge \max_{x\in[m-R,m+R]}|x|. \Label{eq:one-dimensional-uniform-condition} 
\end{equation}
Since $R<|m|$, the interval does not contain the origin, and its point farthest from the origin has absolute value $|m|+R$. Thus \eqref{eq:one-dimensional-uniform-condition} is equivalent to $\alpha\ge1+R/|m|$.

Therefore, the set over which the infimum in the definition of
$\alpha_{\mathrm{ball}}(m,R,c)$ is taken is exactly
\[
\left[1+\frac{R}{|m|},\infty\right).
\]
The infimum of this closed half-line is its left endpoint, and the left
endpoint itself is feasible.  Consequently, we have
\[
\alpha_{\mathrm{ball}}(m,R,c)=1+\frac{R}{|m|},
\]
which proves \eqref{eq:one-dimensional-value}.

Set $\rho:=R/|m|$.  Since $0<\rho<c$, a direct calculation gives
\[
\frac{c(1-\rho^2)}{c-\rho}-(1+\rho)
=
\frac{\rho(1-c)(1+\rho)}{c-\rho}\ge0.
\]
Because $\rho>0$ and $c-\rho>0$, equality holds exactly when $c=1$.
This proves \eqref{eq:one-dimensional-comparison} and its equality condition.
\qed
\end{proof}

\subsection{Proof of Theorem~\ref{thm:alpha-B}}
We have now evaluated the robust ball value in both dimensional regimes.
Lemma~\ref{lem:S-angular-maximum}, together with the preceding pointwise and
radial reductions, gives the normalized value for $d\ge2$, while
Lemma~\ref{lem:S-one-dimensional} gives the value for $d=1$.  We now return
to the original variables and complete the proof of
Theorem~\ref{thm:alpha-B}.

Let $M:=\|\mathbf{m}\|_2$ and $\rho:=R/M$.  Suppose first that $d\ge2$.
By Lemma~\ref{lem:S-normalization}, we may evaluate the robust value for the
normalized ball.  Equations \eqref{eq:robust-pointwise-representation} and
\eqref{eq:normalized-ball-value} give
\begin{align}
\alpha_{\mathrm{ball}}(\mathbf{m},R,c)
=
\alpha_{\mathrm{ball}}(\mathbf{m}/M,\rho,c)
=
\frac{c(1-\rho^2)}{c-\rho}
=
\frac{c(M^2-R^2)}{M(cM-R)}.
\Label{eq:ball-value-rescaled}
\end{align}

It remains to verify that this infimum is attained.  By
\eqref{eq:robust-feasible-set} and Lemma~\ref{lem:S-angular-maximum}, the
feasible set for the normalized ball is the closed half-line beginning at
the value in \eqref{eq:normalized-ball-value}.  Hence that value is the
minimum for the normalized problem.  Scale normalization preserves the
feasible values of $\alpha$, so the value in
\eqref{eq:ball-value-rescaled} is also the minimum for the original ball.
This proves the assertion for $d\ge2$.

If $d=1$, Lemma~\ref{lem:S-one-dimensional} gives the exact minimum
$1+R/\|m\|$ and the comparison with the multidimensional expression in
\eqref{MA}.  This proves the remaining assertion.
\qed

\section{Proof of the sufficient box theorem}
\Label{S-thm:alpha}

\subsection{Preparation for Proof of Theorem~\ref{thm:alpha}}
The purpose of this section is to transfer the ball dominance result to the
coordinate box.  We first show that the box is contained in the Euclidean
ball centered at its midpoint with radius equal to the Euclidean norm of its
half-width vector.  We then apply Theorem~\ref{thm:alpha-B} to that containing
ball.  This proves feasibility for the box.  Because the passage from the
ball to the box uses only set inclusion, it does not by itself prove that the
same scaling is minimal for the box.

\begin{lemma}[Containment of the coordinate box]
\Label{lem:S-box-containment}
Let $\mathbf{l},\mathbf{h}\in\mathbb{R}^d$ satisfy
$l_i\le h_i$ for every $i$, and let
\(
\mathbf{m}:=\frac{\mathbf{l}+\mathbf{h}}{2}\)
and 
\(\mathbf{r}:=\frac{\mathbf{h}-\mathbf{l}}{2}
\)
denote the midpoint and half-width vector of the coordinate box,
respectively. Then the coordinate box is contained in the Euclidean
ball centered at $\mathbf{m}$ with radius $\|\mathbf{r}\|_2$; 
that is,
\begin{equation}
[\mathbf{l},\mathbf{h}]
\subseteq
\mathbb{B}(\mathbf{m},\|\mathbf{r}\|_2),
\Label{eq:box-ball-containment}
\end{equation}
where $[\mathbf{l},\mathbf{h}]
$ is defined in \eqref{BNA}.
\end{lemma}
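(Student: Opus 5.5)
The plan is a direct coordinatewise estimate. Take an arbitrary $\boldsymbol{\xi}\in[\mathbf{l},\mathbf{h}]$, write $\boldsymbol{\xi}-\mathbf{m}$ in coordinates, and bound each coordinate by the corresponding half-width. Summing the squares then gives the Euclidean bound.

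First, for each $i$, the defining inequalities $l_i\le\xi_i\le h_i$ give
\[
l_i-m_i\le \xi_i-m_i\le h_i-m_i .
\]
By the definitions of $\mathbf{m}$ and $\mathbf{r}$, we have $h_i-m_i=(h_i-l_i)/2=r_i$ and $l_i-m_i=-r_i$. Since $l_i\le h_i$, each $r_i$ is nonnegative, so the two-sided bound reads $|\xi_i-m_i|\le r_i$.

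Second, square these coordinate bounds; squaring is legitimate because both sides are nonnegative. Summing over $i=1,\ldots,d$ gives
\[
\|\boldsymbol{\xi}-\mathbf{m}\|_2^2=\sum_{i=1}^d(\xi_i-m_i)^2\le\sum_{i=1}^d r_i^2=\|\mathbf{r}\|_2^2 .
\]
Taking square roots yields $\boldsymbol{\xi}\in\mathbb{B}(\mathbf{m},\|\mathbf{r}\|_2)$, which is \eqref{eq:box-ball-containment}.

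There is no genuine obstacle here; the argument is elementary. The only point needing care is the sign of $r_i$, which is guaranteed by the standing assumption $l_i\le h_i$. The same argument covers degenerate coordinates with $r_i=0$ and the case $\mathbf{r}=0$, where the ball reduces to the single point $\{\mathbf{m}\}$.
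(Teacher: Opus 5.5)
Your proof is correct and follows essentially the same route as the paper: you bound each coordinate by $|\xi_i-m_i|\le r_i$ using $r_i\ge0$, then sum the squares to get $\|\boldsymbol{\xi}-\mathbf{m}\|_2^2\le\|\mathbf{r}\|_2^2$. This is exactly the paper's argument.
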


\begin{proof}
The assumption $l_i\le h_i$ gives $r_i\ge0$.  If
$\boldsymbol{\xi}\in[\mathbf{l},\mathbf{h}]$, then
the relation \(
-r_i\le \xi_i-m_i\le r_i\)
holds, and hence $|\xi_i-m_i|\le r_i$ for every $i$.  Therefore,
we have
\[
\|\boldsymbol{\xi}-\mathbf{m}\|_2^2
=
\sum_{i=1}^d(\xi_i-m_i)^2
\le
\sum_{i=1}^d r_i^2
=
\|\mathbf{r}\|_2^2.
\]
This proves \eqref{eq:box-ball-containment}.
\qed
\end{proof}

\subsection{Proof of Theorem~\ref{thm:alpha}}
We apply the ball result with $R=\|\mathbf{r}\|_2$ and then restrict the
dominance inequality to the box by Lemma~\ref{lem:S-box-containment}.

Set
\(
R:=\|\mathbf{r}\|_2.
\)
The hypothesis $R<c\|\mathbf{m}\|_2$ implies $\mathbf{m}\ne0$.
Suppose first that $R=0$.  Then $\mathbf{l}=\mathbf{h}=\mathbf{m}$, and the
box consists only of $\mathbf{m}$.  The displayed formula gives $\alpha=1$,
so \eqref{eq:alpha2} holds.  Moreover, taking $\mathbf{y}=\mathbf{m}$ shows
that every feasible scaling for the radius-zero containing ball satisfies
$\alpha\ge1$.  Thus $1$ is its exact minimum.

Now suppose that $R>0$.  The scaling in the theorem can be written as
\begin{equation}
\alpha
=
\frac{c(\|\mathbf{m}\|_2^2-R^2)}
{\|\mathbf{m}\|_2(c\|\mathbf{m}\|_2-R)}.
\Label{eq:box-scaling-as-ball-scaling}
\end{equation}
If $d\ge2$, Theorem~\ref{thm:alpha-B} shows that this value is feasible, and
in fact minimal, for $\mathbb{B}(\mathbf{m},R)$.

If $d=1$, Lemma~\ref{lem:S-one-dimensional} shows that the feasible scalings
for the containing ball are precisely
\[
\left[1+\frac{R}{\|\mathbf{m}\|_2},\infty\right).
\]
By \eqref{eq:one-dimensional-comparison}, the value in
\eqref{eq:box-scaling-as-ball-scaling} belongs to this interval and is
therefore feasible for the containing ball.

Consequently, in every dimension, we have
\begin{equation}
\langle\alpha\mathbf{m},\mathbf{y}\rangle
\ge
\langle\mathbf{x},\mathbf{y}\rangle
\quad
\text{for every }\mathbf{x}\in\mathbb{B}(\mathbf{m},R)
\text{ and }\mathbf{y}\in\mathcal{C}(\mathbf{x},c).
\Label{eq:containing-ball-dominance}
\end{equation}
By Lemma~\ref{lem:S-box-containment},
$[\mathbf{l},\mathbf{h}]\subseteq\mathbb{B}(\mathbf{m},R)$.
Substituting $\mathbf{x}=\boldsymbol{\xi}$ into
\eqref{eq:containing-ball-dominance} proves \eqref{eq:alpha2}.

For $d\ge2$, Theorem~\ref{thm:alpha-B} also gives the asserted exactness for
the containing ball.  No exactness for the box is inferred from containment.
\qed

\bibliographystyle{spmpsci}
\bibliography{referencesH}

\end{document}